\definecolor{linkcolour}{rgb}{0,0,0.6}
\theoremstyle{plain}
\newtheorem{theorem}{Theorem}
\newtheorem{lemma}[theorem]{Lemma}
\newtheorem{corollary}[theorem]{Corollary}
\newtheorem{proposition}[theorem]{Proposition}
\newtheorem{example}{Example}
\newtheorem*{example*}{Example}
\theoremstyle{defintion} 
\newtheorem{definition}{Definition}
\newtheorem{assum}[definition]{Assumption}
\newtheorem*{assum*}{Assumption}
\newtheoremstyle{boldremark}
{\topsep} 		
{\topsep} 		
{\normalfont} 	
{}          	
{\bfseries} 	
{.}         	
{.5em}      	
{}          	
\theoremstyle{boldremark}
\newtheorem{remark}{Remark}
\newtheorem*{remark*}{Remark}
\newtheorem{notation}{Notation}
\newtheorem*{notation*}{Notation}
\newtheorem*{setup*}{Problem Setup}
\newcommand{\E}{\mathbb{E}}
\newcommand{\N}{\mathbb{N}}
\newcommand{\R}{\mathbb{R}}
\newcommand{\one}{{ \mathbbm{1} }}
\newcommand{\Ac}{\mathcal{A}}
\newcommand{\Fc}{\mathcal{F}}
\newcommand{\Oc}{\mathcal{O}}
\newcommand{\Oct}{\widetilde{\Oc}}
\newcommand{\Fkurrent}{\mathscr{F}}
\newcommand{\eps}{\varepsilon}
\newcommand{\nf}{{ \nabla f }}
\newcommand{\nF}{{ \nabla F }}
\newcommand{\ls}[2]{{ #1,\dots,#2 }}
\newcommand{\lsn}[1]{ \ls{{#1}_1}{{#1}_n} }
\newcommand{\al}{\alpha}
\newcommand{\Ngeq}{\N_{\geq 1}}
\newcommand{\fas}{ \forall \, }
\newcommand{\exs}{ \exists \, }
\newcommand{\alg}[1]{\texttt{#1}}
\DeclareMathOperator{\cone}{ cone }			
\newcommand{\set}[1]{{ \left\{ #1 \right\} }}
\newcommand{\pare}[1]{{ \left( #1 \right) }}
\newcommand{\norm}[1]{\left\lVert#1\right\rVert}
\newcommand{\nsq}[1]{{ \norm{#1}^2 }}
\DeclarePairedDelimiter\ceil{\lceil}{\rceil}
\newcommand{\Exp}[2][]{
	\ifthenelse{\equal{#1}{}}{\E \left[ #2 \right]}{\E \left[ #2 \, \middle| \, #1 \right]}
}
\newcommand{\Expnorm}[2][]{
	\ifthenelse{\equal{#1}{}}{\E \left[ \norm{#2} \right]}{\E \left[ \norm{#2} \, \middle| \, #1 \right]}
}
\newcommand{\Expnsq}[2][]{
	\ifthenelse{\equal{#1}{}}{\E \left[ \nsq{#2} \right]}{\E \left[ \nsq{#2} \, \middle| \, #1 \right]}
}
\newcommand{\UE}[2][]{
	\ifthenelse{\equal{#1}{}}{\E \left[ #2 \right]}{\E_{#1} \left[ #2 \right]}
}
\newcommand{\stack}[2]{{
		\stackrel{\mathclap{#1}}{#2}
		\hspace*{(\widthof{#1} - \widthof{$\mathsurround=0pt #2$})/2}
}}
\newcommand{\stackAlign}[2]{
	\hspace*{(\widthof{$\mathsurround=0pt #1$} - \widthof{$\mathsurround=0pt #2$})/2} \stackrel{\mathclap{#1}}{#2} &
	\hspace*{(\widthof{$\mathsurround=0pt #1$} - \widthof{$\mathsurround=0pt #2$})/2} 
}
\newcommand{\nameeqref}[1]{\hyperref[#1]{(\nameref{#1})}}
\newcommand{\iterationIndex}{t}
\newcommand{\fin}{1}
\newcommand{\li}{T}
\newcommand{\Lz}{L_0}
\newcommand{\Lo}{L_1}
\newcommand{\Az}{A_0}
\newcommand{\Ao}{A_1}
\newcommand{\Bz}{B_0}
\newcommand{\Bo}{B_1}
\newcommand{\Dz}{\Delta_\fin}
\newcommand{\varsym}{\sigma}
\newcommand{\tz}{\ii_0}
\newcommand{\cMeasure}{T_\eps}
\newcommand{\iterationIndexP}{{ \iterationIndex + 1 }}
\newcommand{\iterationIndexM}{{ \iterationIndex - 1 }}
\let\ii\iterationIndex
\let\ssi\stepsize
\let\iterationVector\iterationVectorNew
\newcommand{\iterationScalar}[2]{{#1}_{#2}}
\newcommand{\iterationX}[1]{\iterationVector{x}{#1}}
\newcommand{\iterationG}[1]{\iterationVector{g}{#1}}
\newcommand{\iterationXi}[1]{\iterationScalar{\xi}{#1}}
\newcommand{\genCommandsAux}[4]{
	\expandafter\def\csname#1#2\expandafter\endcsname\expandafter{\csname iteration#3\endcsname{#4}{\iterationIndex}}
	\expandafter\def\csname#1#2p\expandafter\endcsname\expandafter{\csname iteration#3\endcsname{#4}{\iterationIndexP}}
	\expandafter\def\csname#1#2m\expandafter\endcsname\expandafter{\csname iteration#3\endcsname{#4}{\iterationIndexM}}
}
\newcommand{\genCommandsName}[3]{
	\genCommandsAux{#1}{k}{#2}{#3}
	\genCommandsAux{#1}{t}{#2}{#3}
	\expandafter\newcommand\csname iteration#1\endcsname[1]{\csname iteration#2\endcsname{#3}{##1}}
}
\newcommand{\genCommands}[2]{\genCommandsName{#1}{#2}{#1}}
\newcommand{\atxk}{\pare{\xk}}
\newcommand{\atxkp}{\pare{\xkp}}
\newcommand{\nFxk}{\nF \atxk}
\newcommand{\LzLo}{($\Lz, \Lo$)}
\newcommand{\isum}{\sum_{\ii = \fin}^\li}
\newcommand{\nsgdm}{\hyperref[alg:nsgdm]{\alg{NSGD-M}}{}}
\newcommand{\backtrGD}{\hyperref[alg:backtrGD]{\alg{GD} with Backtracking Line Search}}
\newcommand{\genmommeth}{\hyperref[alg:general_momentum_method]{{General Normalized Momentum Method}}}
\let\ix\iterationx
\newcommand{\gaj}{\iterationga j}
\newcommand{\gai}{\iterationga i}
\newcommand{\betaprod}[2]{\beta_{#1 : #2}}
\let\prodb\betaprod
\newcommand{\Aa}{{\eta \Lo}}
\newcommand{\Aat}{{\Lo \etat}}
\newcommand{\of}{{\frac 1 4}}
\newcommand{\xfin}{{\iterationx \fin}}
\newcommand{\assumlzlo}{\nameeqref{assum:lzlo_smooth}}
\newcommand{\assumlb}{\nameeqref{assum:lower_bounded}}
\newcommand{\assumbounded}{\nameeqref{assum:bounded_var}}
\newcommand{\mpp}{{-p}}
\newcommand{\mq}{{-q}}
\newcommand{\etatau}{\iterationeta \tau}
\newcommand{\xtau}{\iterationx \tau}
\newcommand{\wsum}[1]{\sum_{\ii = #1}^{b}}
\newcommand{\nFx}{{ \nF \pare x }}
\newcommand{\Acdet}{\Ac_{\det}}
\newcommand{\Fcthet}{\Fc_{\theta}}
\newcommand{\FkThet}{\Fkurrent_\Theta}
\newcommand{\FcLDz}{\Fc_{L, \Dz}}
\let\iterationVector\iterationVectorOld}%
\let\iterationVector\iterationVectorNew}
\title{Parameter-Agnostic Optimization under Relaxed Smoothness}
\author{Florian Hübler\footnotemark[1] \and 
		Junchi Yang\footnotemark[2] \and
		Xiang Li\footnotemark[2] \and
		Niao He\footnotemark[2]}
\date{Department of Computer Science, ETH Zurich, Switzerland}
\begin{document}

\maketitle

\renewcommand{\thefootnote}{\fnsymbol{footnote}}
\footnotetext[1]{{\texttt{fhuebler@student.ethz.ch}}}
\footnotetext[2]{{\texttt{\{junchi.yang, xiang.li, niao.he\}@inf.ethz.ch}}}
\renewcommand{\thefootnote}{\arabic{footnote}}

\begin{abstract}
	Tuning hyperparameters, such as the stepsize, presents a major challenge of training machine learning models. To address this challenge, numerous adaptive optimization algorithms have been developed that achieve near-optimal complexities, even when stepsizes are independent of problem-specific parameters, provided that the loss function is $L$-smooth. However, as the assumption is relaxed to the more realistic $(L_0, L_1)$-smoothness, all existing convergence results still necessitate tuning of the stepsize. In this study, we demonstrate that Normalized Stochastic Gradient Descent with Momentum (NSGD-M) can achieve a (nearly) rate-optimal complexity without prior knowledge of any problem parameter, though this comes at the cost of introducing an exponential term dependent on $L_1$ in the complexity. We further establish that this exponential term is inevitable to such schemes by introducing a theoretical framework of lower bounds tailored explicitly for parameter-agnostic algorithms. Interestingly, in deterministic settings, the exponential factor can be neutralized by employing Gradient Descent with a Backtracking Line Search. To the best of our knowledge, these findings represent the first parameter-agnostic convergence results under the generalized smoothness condition. Our empirical experiments further confirm our theoretical insights.
\end{abstract}

\section{Introduction}
\label{sec:intro}
We consider the unconstrained optimization problem
\begin{align}\label{eq:problam}
	\min_{x \in \R^d} F(x),
\end{align}
where $F \colon \R^d \to \R$ may be non-convex and admits access to unbiased stochastic gradients. This setting has  been extensively studied due to its prevalence in modern machine learning and data-driven optimization~\citep{Bottou2018}.

When the objective function $F$ is $L$-smooth, i.e., $F$ has $L$-Lipschitz gradients, the problem is well-explored. For the goal of finding an $\eps$-stationary point, lower bounds have been established, notably by \citet{LowerBound2}, setting a limit of $\Omega\pare{{L \Dz \varsym^2}{\eps^{-4}}}$ for stochastic first-order methods. Here $\varsym$ denotes the variance of the stochastic gradient and $\Dz$ the initialization gap. Stochastic Gradient Descent (\alg{SGD}) achieves this complexity but with stepsizes depending on problem parameters like $L$~\citep{Ghadimi2013}. Remarkably, several algorithms such as \alg{AdaGrad-Norm}, oblivious to problem parameters, are recently proven to achieve a nearly rate-optimal complexity $\Oct\pare{\eps^{-4}}$, up to the dependency on problem parameters and logarithmic factors \citep{AdaNorm_LSmooth_AffineVar, Junchi2022}. We call algorithms with this characteristic \emph{parameter-agnostic}, and \emph{parameter-dependent} otherwise.

However, \citet{Zhang20} highlight that not all machine learning applications adhere to the $L$-smoothness assumption. Their experiments in language modeling tasks revealed that the norm of the Hessian is not uniformly upper-bounded as required by $L$-smoothness. Rather, it may increase affinely with the gradient norm. To bridge the gap between theory and this observation, they introduced a more general smoothness condition termed \LzLo-smoothness:
\begin{align*}
	\norm{\nabla^2 F\pare x} \leq \Lz + \Lo \norm{\nFx}.
\end{align*}
This condition has since been further validated in various machine learning tasks~\citep{ImprovedAnalysis, ComponentWise}.

In light of this more realistic smoothness assumption, a substantial body of literature has emerged. The  nearly rate-optimal complexity $\Oct \pare{\eps^{-4}}$ has been established for various algorithms, including \alg{SGD}~\citep{li2023convex}, Clipped \alg{SGD} \citep{Zhang20, ImprovedAnalysis}, Normalized \alg{SGD} \citep{LzLoNGD}, \alg{AdaGrad-Norm} \citep{BeyondUniform, AdagradDiv} and \alg{ADAM} \citep{AdamMIT}. Yet, all of these algorithms require prior information of the problem, such as the values of $L_0$ and $L_1$. Notably, unlike the $L$-smooth setting, \alg{AdaGrad-Norm} may diverge without access to $L_1$ \citep{AdagradDiv}, shedding its fully parameter-agnostic nature. This dependence on problem parameters poses a significant challenge  as these parameters are usually unknown in practical applications, necessitating resource-intensive tuning~\citep{AdaGrad_Sharp}. These observations culminate in the pressing question:

\begin{quote}
	\textit{Is there an algorithm that converges with near-optimal complexity, without having access to any problem parameters in the ($L_0$, $L_1$)-smoothness setting?} 
\end{quote} 

With the growing interest in the development of parameter-agnostic algorithms, a fundamental \mbox{trade-off} becomes evident: while these algorithms demand less prior knowledge about the problem, they may also offer weaker convergence guarantees. For instance, under $L$-smoothness, \alg{SGD} with decaying stepsizes $\etat = \nicefrac \ssi {\sqrt \ii}$ achieves the near-optimal complexity $\Oct \pare{L\Dz\varsym^2 \eps^4}$ when $\eta$ is selected using knowledge of problem parameters~\citep{Ghadimi2013}. Without this information, however, using the same stepsizes has been shown to suffer from a lower bound of $\Omega \pare{\ssi^{-4} L^{-2}e^{\nicefrac {\pare{\ssi L}^2} 8} \eps^{-4}}$, even in the deterministic setting~\citep{Junchi2022}.  

This gap underscores the need to differentiate between parameter-agnostic and parameter-dependent algorithms when establishing lower bounds to truly grasp the potential of parameter-agnostic algorithms. However, the existing lower bound framework is constructed in a way that implicitly allows algorithms to have access to problem-specific parameters. This shortcoming raises the second key question that this paper seeks to address:
\begin{quote}
    \textit{Can we develop a lower bound framework that distinguishes between parameter-agnostic and parameter-dependent algorithms?}
\end{quote} 

\subsection{Our Contributions}
To tackle these challenges, this work makes the following contributions:

\begin{enumerate}[label=\alph*)]
	\item We show that, under the relaxed ($L_0$, $L_1$)-smoothness assumption, Normalized Stochastic Gradient Descent with Momentum (\nsgdm), as introduced by~\citep{nsgdm}, converges with a nearly rate-optimal complexity of $\Oct\pare{\eps^{-4}}$ without any prior knowledge of the problem parameters. However, it results in an exponential dependency on $\Lo$, which vanishes when the stepsize is informed by $\Lo$. Furthermore, we prove that this exponential dependency can also be avoided in the deterministic setting using Gradient Descent (\alg{GD}) with Backtracking Line Search, resulting in a complexity of $\Oc\pare{\pare{\Lz \Dz + \Lo^2 \Dz^2}\eps^{-2}}$. To the best of our knowledge, these are the first parameter-agnostic convergence results in the \LzLo-smoothness setting.
	
	\item  We provide a novel framework for lower bound analysis tailored to parameter-agnostic algorithms. Within this framework, we show that the exponential term in $\Lo$ is indispensable for a class of Normalized Momentum Methods, including \nsgdm, when the problem parameters are unknown. This framework distinctly delineates the parameter-agnostic setting from the parameter-dependent setting, in which \nsgdm~does not suffer from the exponential term. Additionally, it suggests that the \LzLo-smoothness setting may be more challenging than the $L$-smoothness setting. 
\end{enumerate}

\subsection{Related Work}
\paragraph{Parameter-Agnostic Algorithms.}
If the objective function is $L$-smooth, convergence results for \alg{SGD} are typically contingent upon stepsizes being less than $\nicefrac 2 L$~\citep{Bottou2018}. In the deterministic setting, \alg{GD} with a constant stepsize that does not satisfy this threshold may diverge~\citep{Nesterov2018}. However, this can be rectified using a Backtracking Line Search~\citep{Armijo1966, nocedal1999numerical}, which does not rely on knowing problem parameters, and achieves an optimal complexity of $\Oc(\epsilon^{-2})$. Conversely, in the stochastic setting, \citet{StochasticLinseard_Div} highlighted that line search techniques might not always converge. \alg{SGD} with a parameter-agnostic diminishing stepsize of $\nicefrac 1 {\sqrt{t}}$ still reaches a near-optimal complexity of $\Oct(\epsilon^{-4})$, though it introduces an inescapable exponential term in $L$~\citep{TwoSides}. Various adaptive methods, such as \alg{AdaGrad}~\citep{AdaGrad1, AdaGrad2}, its variants \alg{AdaGrad-Norm}~\citep{AdaGradNorm} and \alg{NSGD-M}~\citep{nsgdm}, bypass this exponential term, even without knowledge of the problem parameters, as recently shown in~\citep{AdaNorm_LSmooth_AffineVar, TwoSides}. These adaptive methods are typically considered more robust to different problem parameters~\citep{AdaGrad_Sharp, kavis2019unixgrad}, given their ability to tune algorithm hyperparameters dynamically during training. In convex optimization, certain algorithms can achieve (near-)optimal convergence rate without access to specific problem details~\citep{lan2015bundle, nesterov2015universal, levy2018online}. There is another line of research dedicated to ``parameter-free" algorithms for online convex optimization~\citep{orabona2016coin, cutkosky2018black}. However, this research emphasizes the optimal dependence on $\norm{x^*-x_0}$, where $x^*$ is the predictor in the regret bound.

\paragraph{\LzLo-Smoothness.} 
\citet{Zhang20} introduced the concept of \LzLo-smoothness, defined by the following affine bound on the Hessian-norm: $\norm{\nabla^2 F(x)} \leq L_0 + L_1 \norm{\nFx}$. The convergence of both \alg{GD} and \alg{SGD} was only recently established in this setting~\citep{li2023convex}. However, their stepsizes require prior knowledge of $\Lz$, $\Lo$, and also the exact gradient norm of the initial point, which can be unavailable in stochastic settings. Clipped \alg{SGD}~\citep{Zhang20}, and its momentum-augmented counterpart~\citep{ImprovedAnalysis}, both demand knowledge of $\Lz$ and $\Lo$ for convergence. They attain an optimal complexity of $\Oc(\eps^{-4})$ and are believed to improve over \alg{SGD} in constants. Additionally, \citet{ImprovedAnalysis} also provided a convergence result for \nsgdm{} with constant stepsizes in the appendix. Their analysis does however make use of a stronger noise assumption and requires access to all parameters. Similar complexities have been established for Normalized \alg{SGD} \citep{LzLoNGD}, signed \alg{SGD}~\citep{ComponentWise}, \alg{AdaGrad-Norm} \citep{BeyondUniform, AdagradDiv}, and \alg{ADAM} \citep{AdamMIT, ProvableAdaptivity}. However, each of these methods requires prior knowledge of problem-specific parameters. Notably, in stark contrast to the $L$-smooth setting, even \alg{AdaGrad-Norm} is not wholly parameter-agnostic. It risks divergence if the stepsize is not informed by $\Lo$, despite the method generally demanding knowledge of fewer problem parameters than other algorithms~\citep{AdagradDiv}.

\paragraph{Lower Bound Theory.} Lower bounds for seeking near-stationary points have been extensively studied within the $L$-smoothness setting. \cite{Nesterov2012} first addressed constrained optimization under box constraints. Subsequently, a seminal study by \cite{LowerBound1} established a tight lower bound of $\Omega \pare{\Dz L \eps^{-2}}$ for the deterministic setting. \cite{LowerBound2} extended the results to the stochastic setting, introducing the $\Omega \pare{\Dz L \varsym^2 \eps^{-4}}$ lower bound. Specific algorithms, such as \alg{SGD}~\citep{drori2020complexity} and \alg{Newton's method}~\citep{cartis2010complexity}, also have associated lower bounds. However, the algorithm classes considered by these lower bounds include algorithms with stepsizes that can depend on problem parameters, so they might not be tight in the parameter-agnostic setting. 
\citet{StochasticLinseard_Div}  discovered that parameter-agnostic SGD with a specific exponentially decreasing stepsize suffers from an exponential dependence during its initial phase when minimizing strongly convex functions.
Later, \citet{TwoSides} also derived a lower bound for \alg{SGD} under a polynomially decreasing stepsize in the nonconvex setting. Yet the implications of the parameter-agnostic lower bound for a class of algorithms remain ambiguous. The aforementioned studies consider the function class of $L$-smooth functions, so they are also applicable to \LzLo-smooth functions. In the realm of online convex optimization, \citet{cutkosky2016online, cutkosky2017online} have introduced a lower bound featuring an exponential term when the norm of the predictor and the Lipschitz constant are allowed to scale with the total number of iterations.

\section{Preliminaries}
\label{sec:prelims}
Let us introduce basic notations, definitions and assumptions needed in the upcoming analysis.

\begin{notation*}
	Throughout the paper, $d\in\Ngeq$ denotes the dimension of the variable to be optimized, $F\colon \R^d \to \R$ the objective and $\nf \pare{\cdot, \cdot}$ the gradient oracle. We use the common convention that empty sums and products are given by their corresponding neutral element. The conic combination of $\lsn x \in \R^d$ is denoted by $\cone\pare{\lsn x} \coloneqq \set{\sum_{i = 1}^n \lambda_i x_i \colon \lsn \lambda \geq 0}$. Functions that formally are not defined in a certain point, but have an continuous extension are to be understood as their extension. In particular, $\frac{e^x - 1}{x}$ has the value $1$ in $x=0$.
\end{notation*}

\begin{setup*}
	Since finding a solution to \eqref{eq:problam} is computationally intractable~\citep{nonconvex_NP}, we aim to find an $\eps$-stationary point. Furthermore we only allow access to a (possibly noisy) gradient oracle $\nf \pare{\cdot, \xi}$ of $\nF$, where $\xi$ is a random vector. Due to this randomness, our specific goal is finding an approximate, random solution $x \in \R^d$ with $\Expnorm{\nFx}\leq \eps$. 
\end{setup*}

Building on established work in stochastic optimization \citep{Ghadimi2013,LowerBound2}, we employ the following two de-facto standard assumptions in various results of this study.

\begin{assum}[Lower Boundedness]\label{assum:lower_bounded}
	The objective function $F$ is lower bounded by $F^* > -\infty$.
\end{assum}

\begin{assum}[Bounded Variance]\label{assum:bounded_var}
	The gradient oracle is unbiased and has finite variance, i.e.~there exists $\varsym \geq 0$ such that 
	\begin{enumerate}[label=\roman*),topsep=0pt]
		\item $\Exp{\nf \pare{x, \xi}} = \nFx$, and
		\item $\Expnsq{\nf \pare{x, \xi} - \nF \pare x} \leq \varsym^2$.
	\end{enumerate}
\end{assum}

Instead of the traditional $L$-smoothness assumption, we adopt the weaker concept of \LzLo-smoothness, as proposed by \citet{Zhang20}. Following the work of \citet{ImprovedAnalysis}, we choose a definition that does not require the Hessian. This definition is therefore weaker than the original \LzLo-smoothness assumption by \citet[Definition 1]{Zhang20}.
\begin{definition}\label{def:lzlo_smoothness}
	Let $\Lz, \Lo \geq 0$ and $f \colon \R^d \to \R$ be a differentiable function. Then $f$ is called \emph{\LzLo-smooth} if for all $x,y \in \R^d$ and all $c > 0$ with $ L_1 \norm{x-y} \leq c$ it holds that
	\begin{align*}
		\norm{\nf \pare x - \nf \pare y}
		\leq  \pare{\Az \pare c \Lz + \Ao \pare c \Lo \norm{\nf \pare x}} \norm{x-y},
	\end{align*}
	where $\Az\pare c \coloneqq 1 + e^c - \frac{e^c - 1}{c}$ and $\Ao \pare c \coloneqq \frac{e^c - 1}{c}$.
\end{definition}

\begin{assum}[\LzLo-smoothness]\label{assum:lzlo_smooth}
	The objective function $F$ is \LzLo-smooth.
\end{assum}

Notably, the two definitions are equivalent if the objective function is twice differentiable as the following lemma shows. The proof can be found in \Cref{sec:app.basic_properties}.

\begin{lemma}\label{lem:equivalence_different_lzlo_defs}
	Let $F \colon \R^d \to \R$ be twice continuously differentiable and $\Lz, \Lo \geq 0$. Then $F$ satisfies $\norm{\nabla^2 F(x)} \leq \Lz + \Lo \norm{\nFx}$ if and only if $F$ is \LzLo-smooth according to \Cref{def:lzlo_smoothness}.
\end{lemma}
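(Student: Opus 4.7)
The plan is to prove both directions via the fundamental theorem of calculus, using a Grönwall-type bound to control the gradient norm along the segment between $x$ and $y$. For the forward implication, I would define $\phi(t) \coloneqq \norm{\nabla F(x + t(y-x))}$ for $t \in [0,1]$ and $h \coloneqq \norm{y-x}$. Combining the identity $\nabla F(x + t(y-x)) = \nabla F(x) + \int_0^t \nabla^2 F(x+s(y-x))(y-x)\, ds$ with the hypothesized Hessian bound yields the integral inequality
\begin{equation*}
\phi(t) \leq \phi(0) + h \int_0^t \bigl(L_0 + L_1 \phi(s)\bigr)\, ds,
\end{equation*}
from which Grönwall's lemma produces $\phi(t) \leq \phi(0)\, e^{L_1 h t} + \tfrac{L_0}{L_1}\bigl(e^{L_1 h t} - 1\bigr)$. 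Substituting this into $\norm{\nabla F(y) - \nabla F(x)} \leq h \int_0^1 (L_0 + L_1 \phi(t))\, dt$ and using $\int_0^1 e^{ct}\, dt = \tfrac{e^c - 1}{c}$ gives a bound of the form $h\bigl(A_0(c) L_0 + A_1(c) L_1 \phi(0)\bigr)$ with $c = L_1 h$, matching the functional form of \Cref{def:lzlo_smoothness}. Since both coefficient functions are monotonically increasing in $c$ with continuous extensions $A_0(0) = A_1(0) = 1$, the inequality extends to every admissible $c \geq L_1 h$; the degenerate case $L_1 = 0$ follows by taking the limit $c \to 0$, which reduces the estimate to the $L$-smooth bound with constant $L_0$.

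For the converse, I would fix $x \in \R^d$ and a unit vector $v$, and apply the inequality of \Cref{def:lzlo_smoothness} with $y = x + h v$ and the minimal admissible $c = L_1 h$. Dividing through by $h$ yields
\begin{equation*}
\frac{\norm{\nabla F(x + h v) - \nabla F(x)}}{h} \leq A_0(L_1 h)\, L_0 + A_1(L_1 h)\, L_1 \norm{\nabla F(x)}.
\end{equation*}
Twice continuous differentiability of $F$ implies the left-hand side converges to $\norm{\nabla^2 F(x)\, v}$ as $h \to 0^+$, while $A_0(L_1 h), A_1(L_1 h) \to 1$ by the continuous extension convention. Taking the supremum over unit vectors $v$ recovers $\norm{\nabla^2 F(x)} \leq L_0 + L_1 \norm{\nabla F(x)}$, which completes the equivalence.

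The main obstacle is matching the precise constants $A_0(c) = 1 + e^c - \tfrac{e^c-1}{c}$ and $A_1(c) = \tfrac{e^c-1}{c}$ in the forward direction. A direct Grönwall-plus-integration computation naturally produces the tighter pair $A_0(c) = A_1(c) = \tfrac{e^c-1}{c}$, so either (i) one dominates this by the paper's constants, using $\tfrac{e^c-1}{c} \leq 1 + e^c - \tfrac{e^c-1}{c}$, which follows from the trapezoid inequality $\tfrac{e^c-1}{c} = \int_0^1 e^{ct}\, dt \leq \tfrac{1 + e^c}{2}$ applied to the convex function $t \mapsto e^{ct}$; or (ii) one performs the integration with the $L_0 h$ contribution from $\phi$ kept separate from the Grönwall-amplified part, which yields the stated splitting directly. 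Either route is routine, and once the constants are in place the proof is complete.
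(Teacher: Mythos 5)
Your proof is correct and follows essentially the same route as the paper: the converse direction is identical (apply \Cref{def:lzlo_smoothness} with $y = x + hv$, divide by $h$, let $h \to 0^+$ using $A_0, A_1 \to 1$, and take the supremum over unit vectors), while for the forward direction you supply the standard Gr\"onwall argument that the paper itself only outsources to a citation of \citet{ImprovedAnalysis}. Your handling of the constants is also sound: the direct computation indeed yields the tighter pair $A_0(c) = A_1(c) = \frac{e^c-1}{c}$, and dominating the former via Hermite--Hadamard ($\int_0^1 e^{ct}\,dt \leq \frac{1+e^c}{2}$), together with the monotonicity of both coefficients in $c$ to cover all admissible $c \geq L_1\norm{x-y}$, correctly recovers the stated form of the definition.
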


\section{Parameter-Agnostic Upper Bounds}
\label{sec:upper_bounds}
In this section, we present the first parameter-agnostic convergence results on \LzLo-smooth functions. In \Cref{sec:upper_bounds.stochastic}, we show that in the stochastic setting,  \nsgdm~(see \Cref{alg:nsgdm}) achieves the nearly rate-optimal complexity of $\Oct\pare{\eps^{-4}}$, even without access to problem-dependent parameters. However, this is accompanied by an undesirable exponential dependence on $\Lo$. In \Cref{sec:upper_bounds.deterministic} we show that in the deterministic setting, \backtrGD~can avoid this exponential dependence, while still being parameter-agnostic.

\subsection					{Stochastic Setting}
\label{sec:upper_bounds.stochastic}
The convergence of \nsgdm{} occurs in two phases. In the initial adaptation phase, the algorithm accumulates error due to a large stepsize. Unfortunately, this error grows exponentially with $\Lo$. This behaviour is intrinsic to \nsgdm~and cannot be eliminated, as we will show in \Cref{sec:lower_bounds}. Once the stepsize decreases below a threshold (which is polynomial in $\Lo$), the algorithm transitions into a convergence phase. In this latter phase, the error decays at a rate of $\li^{- \nicefrac 1 4} \log \pare \li$. The following \Cref{thm:main_result} formalizes this behaviour, and the proof can be found in \Cref{sec:app.missing_proofs.upper_bounds.stochastic_setting}.

\begin{algorithm2e}[t]
	\caption{Normalized \alg{SGD} with Momentum (\alg{NSGD-M}) \citep{nsgdm}}
	\label{alg:nsgdm}
	\DontPrintSemicolon
	\SetKwInOut{Input}{Input}
	\Input{Starting point $\iterationx \fin \in \R^d$, stepsizes $\etat > 0$, moving average parameters $\betat \in [0,1)$}
	$\iterationm 0 \gets 0$\;
	\For{$\iterationIndex = 1, 2, \,\hdots$}{
		Indep.~sample $\xit$ from the distribution of $\xi$.\;
		$\gk \gets \nf \pare{\xk, \xit}$\;
		$\mk \gets \betat \mtm + \pare{1 - \betat}\gk$\;
		$\xkp \gets \xk - \frac \etat {\norm{\mk}} \mk$\;
	}
\end{algorithm2e}

\begin{theorem}[Convergence of \nsgdm]\label{thm:main_result}
	Assume \assumlb, \assumlzlo~and \assumbounded. Furthermore, define the parameters $\betat \coloneqq 1 - \ii^{-\nicefrac 1 2}$ and $\etat \coloneqq \frac {\ii^{-\nicefrac 3 4}} 7$.
	Then \nsgdm~with starting point $\iterationx \fin \in \R^d$ satisfies
	\begin{align*}
		\frac 1 \li \isum \Expnorm \nFxk
		\leq&\ \Oct\pare{\frac{\Dz e^{\Lo^2} + \varsym + e^{\Lo^2} \Lz}{\li^{\nicefrac 1 4}}},
	\end{align*}
	where $\Dz \coloneqq F \pare{\xfin} - F^*$ is the initialization gap.
\end{theorem}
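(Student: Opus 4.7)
The plan is to set up a descent inequality, bound the momentum estimation error, and combine the two via a two-phase argument (an adaptation phase in which the exponential term is paid, followed by a convergence phase in which the $T^{-1/4}$ rate kicks in).

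First, I would write down the one-step descent for \nsgdm. Since $\|\iterationx{\ii+1} - \iterationx \ii\| = \etat$, applying \Cref{def:lzlo_smoothness} with $c \coloneqq \Lo \etat$ and integrating along the segment yields
\begin{align*}
    F(\iterationx{\ii+1}) \leq F(\iterationx \ii) - \etat \left\langle \nFxk, \tfrac{\mk}{\|\mk\|} \right\rangle + \tfrac{\etat^2}{2}\bigl(\Az(c)\Lz + \Ao(c)\Lo \|\nFxk\|\bigr).
\end{align*}
The standard normalized-gradient inequality $-\langle g, m/\|m\| \rangle \leq 2\|m - g\| - \|g\|$ then gives
\begin{align*}
    \etat \|\nFxk\| \leq F(\iterationx \ii) - F(\iterationx{\ii+1}) + 2\etat \|\mk - \nFxk\| + \tfrac{\etat^2}{2}\bigl(\Az(c)\Lz + \Ao(c)\Lo \|\nFxk\|\bigr).
\end{align*}
Because $\etat = t^{-3/4}/7$, eventually $\Lo \etat \leq 1$ so $\Az(c), \Ao(c) = \Oc(1)$, and the quadratic term is absorbable into the left-hand side. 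Summing $\ii$ from some threshold $\tone$ (depending on $\Lo$) up to $\li$ and telescoping produces
\begin{align*}
    \sum_{\ii=\tone}^{\li} \etat \|\nFxk\| \lesssim F(\iterationx{\tone}) - F^* + \sum_{\ii=\tone}^{\li}\bigl(\etat \|\mk - \nFxk\| + \etat^2 \Lz\bigr).
\end{align*}

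Second, I would control the momentum error $\mk - \nFxk$. Unrolling the recursion $\mk - \nFxk = \betat(\mtm - \nf(\iterationx{\ii-1})) + \betat(\nf(\iterationx{\ii-1}) - \nFxk) + (1-\betat)(\gk - \nFxk)$ expresses the error as a sum of a martingale contribution (controlled in expectation by $\varsym$ via the weighted-variance argument, using $\betat = 1 - \ii^{-1/2}$) plus a bias contribution of the form $\sum_{\tau} \prodb{\tau}{\ii}\|\nf(\iterationx{\tau+1}) - \nf(\iterationx{\tau})\|$. Using \LzLo-smoothness on each bias summand bounds it by $\etat\bigl(\Az(c)\Lz + \Ao(c)\Lo\|\nf(\iterationx{\tau})\|\bigr)$. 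Taking expectations, the momentum-error bound is of the form
\begin{align*}
    \Expnorm{\mk - \nFxk} \lesssim \tfrac{\varsym}{\ii^{1/4}} + \Lz \ii^{-1/4} + \Lo \sum_{\tau} \prodb{\tau}{\ii}\etatau\, \Expnorm{\nf(\iterationx \tau)}.
\end{align*}

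The main obstacle will be the closed-loop coupling between the momentum-error bound and the accumulated gradient norms: the $\Lo\sum \prodb{\tau}{\ii}\etatau \Expnorm{\nf(\iterationx \tau)}$ term fed back into the descent inequality creates a Grönwall-type recursion. I would handle it by treating all iterations before some threshold $\tz = \Oc(\Lo^{\alpha})$ as an adaptation phase, on which I simply upper-bound $\Expnorm{\nFxk}$ by the trivial exponentiated recursion; this yields the $e^{\Lo^2}$ prefactor on $\Dz$ and $\Lz$ in the statement. For $\ii \geq \tz$, the product $\prodb{\tau}{\ii}\etatau \Lo$ stays below a small constant, so the Grönwall inequality closes and the feedback term can be absorbed into the left-hand side of the descent bound.

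Third, putting everything together, dividing by $\sum_{\ii=1}^{\li}\etat \asymp \li^{1/4}$ (after pushing absolute values inside) and using the Cauchy--Schwarz type relation $\sum \etat \Expnorm{\nFxk} \geq \etat_{\min}\sum \Expnorm{\nFxk}$ (or more carefully, a weighted-average argument to convert the step-weighted sum into an unweighted average with an extra $\log \li$ factor) yields the claimed $\Oct\bigl((\Dz e^{\Lo^2} + \varsym + e^{\Lo^2}\Lz)/\li^{1/4}\bigr)$ bound. The logarithmic factors hidden in $\Oct$ come from the weighted-to-unweighted conversion and from the telescoping of $\betat$ products.
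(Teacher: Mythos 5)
Your proposal is correct and follows essentially the same route as the paper's proof: a descent lemma, a martingale-plus-bias decomposition of the momentum error, and a two-phase argument with a threshold $\tz \propto \pare{\ssi\Lo}^4$ after which the $\Lo$-feedback term is absorbed into the left-hand side. The only cosmetic difference is in the adaptation phase, where you iterate a per-step Gr\"onwall recursion while the paper applies the \LzLo-smoothness definition once to the total displacement $\norm{\xt - \xfin} \leq 4\ssi\, \ii^{\nicefrac 1 4}$ (both yield the $e^{\Oc\pare{(\ssi\Lo)^2}}$ factor); you should also note, as the paper does via \Cref{lem:app.gradient_bound}, that the resulting $e^{\Lo^2}\norm{\nFxo}$ term must be converted into $e^{\Lo^2}\pare{\Lo\Dz + \min\set{\nicefrac{\Lz}{\Lo},\sqrt{8\Lz\Dz}}}$ to reach the stated form.
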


Since \LzLo-smoothness includes $L$-smoothness as a special case, the lower bound of $\Oc(\eps^{-4})$ to find an $\eps$-stationary point is still applicable here. \Cref{thm:main_result} implies an optimal complexity in $\eps$ up to the logarithmic factor without any prior knowledge of the problem parameters, but it comes with the cost of an exponential term in $L_1$. The following corollary shows that this cost arises from the parameter-agnostic stepsize; that is, the exponential term disappears when the stepsize is informed by the value of $\Lo$ only.

\begin{corollary}\label{cor:non_agnostic_nsgdm}
Under the assumptions of \Cref{thm:main_result}, define the parameters $\betat \coloneqq 1 - \ii^{-\nicefrac 1 2}$ and $\etat \coloneqq \frac {\ii^{-\nicefrac 3 4}} {12\Lo}$.
Then \nsgdm~with starting point $\iterationx \fin \in \R^d$ satisfies
\begin{align*}
	\frac 1 \li \isum \Expnorm \nFxk
	\leq \Oct\pare{\frac{ \Lo \Dz + \varsym + \frac{\Lz} {\Lo}}{\li^{\nicefrac 1 4}}},
\end{align*}
where $\Dz \coloneqq F \pare{\xfin} - F^*$ is the initialization gap.
\end{corollary}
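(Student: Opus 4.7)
The corollary should follow by rerunning the proof of \Cref{thm:main_result} essentially verbatim, with the only change being the replacement of the parameter-agnostic stepsize by $\etat = \nicefrac{\ii^{-\nicefrac{3}{4}}}{12 \Lo}$. The high-level observation driving the plan is that the adaptation phase of the proof of \Cref{thm:main_result}, which is responsible for the exponential factor $e^{\Lo^2}$, is triggered precisely when the product $\etat \Lo$ exceeds some small absolute constant. With the $\Lo$-informed stepsize we have $\etat \Lo = \nicefrac{\ii^{-\nicefrac{3}{4}}}{12} \leq \nicefrac{1}{12}$ for every $\ii \geq 1$, so the iterates immediately lie in the convergence regime and the adaptation analysis becomes vacuous.

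Concretely, the plan is as follows. First, I would restate the per-iteration descent-type inequality used in the proof of \Cref{thm:main_result}, which relates $F(\xkp)$ to $F(\xk)$, $\|\mk\|$, and $\|\nFxk\|$ via \LzLo-smoothness; its hypothesis is a bound of the form $\etat \Lo \leq c$ for a universal constant $c$, and this is verified above for the new stepsize. Second, I would reuse the momentum-error bound controlling the deviation of $\mk$ from $\nFxk$ in terms of $\betat$ and $\varsym$: this bound depends only on $\lsn \beta$ and the noise assumption, so it is insensitive to rescaling $\etat$ by a factor of $\Lo^{-1}$.

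Combining these two ingredients and telescoping over $\ii = 1, \ldots, \li$ yields a bound of the schematic form
\begin{align*}
    \frac{1}{\li} \isum \Expnorm{\nFxk}
    \lesssim \frac{\Dz}{\isum \etat}
        \; + \; \frac{\Lz \isum \etat^2}{\isum \etat}
        \; + \; \frac{\varsym \cdot (\text{moment sum})}{\isum \etat}.
\end{align*}
Plugging in $\etat = \nicefrac{\ii^{-\nicefrac{3}{4}}}{12\Lo}$ gives $\isum \etat \asymp \nicefrac{\li^{\nicefrac{1}{4}}}{\Lo}$ and $\isum \etat^2 \asymp \nicefrac{\log \li}{\Lo^2}$, so the three terms evaluate to $\nicefrac{\Lo \Dz}{\li^{\nicefrac{1}{4}}}$, $\nicefrac{\Lz \log \li}{\Lo \li^{\nicefrac{1}{4}}}$, and $\nicefrac{\varsym}{\li^{\nicefrac{1}{4}}}$ (up to $\log$-factors), matching the stated rate.

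The main obstacle I expect is careful bookkeeping rather than new ideas: the proof of \Cref{thm:main_result} splits the iteration range at some threshold $\tz$ polynomial in $\Lo$ and bounds the pre-$\tz$ contribution by a quantity exponential in $\Lo^2$. I would need to inspect which constants in the per-iteration analysis silently hide a factor $e^{\Aat}$ or a geometric sum that blows up when $\etat \Lo$ is large, and verify that each of them is bounded by a universal number under the uniform bound $\etat \Lo \leq \nicefrac{1}{12}$, so that $\tz$ can be taken to be $1$ and the exponential factor never appears. Once this verification is complete, the remaining arithmetic reduces to estimating standard sums of $\ii^{-\nicefrac{3}{4}}$ and $\ii^{-\nicefrac{3}{2}}$.
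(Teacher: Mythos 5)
Your proposal is correct and matches the paper's proof: the paper likewise plugs the $\Lo$-informed stepsize into the intermediate bound from the proof of \Cref{thm:main_result}, observes that $\const\,\ssi\Lo\,\ii^{-\nicefrac 1 4}\Dt \leq \nicefrac 1 2$ holds for all $\ii$ so the threshold $\tz$ is trivial and the term involving $\Expnorm{\nFxk}$ is absorbed into the left-hand side, and then evaluates the remaining sums exactly as you describe. (One minor imprecision: the momentum-deviation bound of \Cref{lem:analysis.mu_bound} does depend on $\etat$ through its $\Lz$- and $\Lo$-weighted terms, but the dependence only improves under the rescaling, so your conclusion stands.)
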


These results indicate that \nsgdm~is potentially more robust to hyper-parameter selection than other existing algorithms. In comparison, \alg{SGD} necessitates knowledge of both $L_0$ and $L_1$, as well as the exact value of $\|\nabla f(x_1)\|$~\citep{li2023convex}. Clipped \alg{SGD} requires to know $L_0$ and $L_1$~\citep{ImprovedAnalysis}, and even \alg{AdaGrad-Norm} demands knowledge of $L_1$~\citep{BeyondUniform, AdagradDiv}. It is important to note that our analysis is significantly different from the previous analysis for \nsgdm{} in \citep{ImprovedAnalysis}. The latter focused on constant stepsizes and momentum parameters determined by $L_0$, $L_1$, target accuracy $\eps$, and variance $\sigma$. It furthermore made use of a stronger noise assumption.

\subsection					{Deterministic Setting}
\label{sec:upper_bounds.deterministic}
Given the prior results, one might naturally wonder if there exists any algorithm that can attain parameter-agnostic convergence without exponential dependence on $\Lo$. The subsequent theorem confirms that this is indeed possible, at least in the deterministic setting. This is achieved by using Gradient Descent with a Backtracking Line-search (see \Cref{alg:backtrGD}).

\begin{algorithm2e}[t]
	\caption{\alg{GD} with Backtracking Line Search}
	\label{alg:backtrGD}
	\DontPrintSemicolon
	\SetKwInOut{Input}{Input}
	\Input{Starting point $\iterationx \fin \in \R^d $, Armijo parameters $\beta \in (0,1)$ and $\gamma \in (0,1)$}
	$\iterationeta 0 \gets 1$\;
	\For{$\iterationIndex = 1, 2, \,\hdots$}{
		Choose $k \in \N$ minimal such that $\beta^k \leq \etatm$ and
        $F\pare{\xt - \beta^k \nFxk} \leq F \atxk - \beta^k \gamma \nsq{\nFxk}$\;
		$\etat \gets \beta^k$\;
		$\xkp \gets \xk - \etat \nFxk$\;
	}
\end{algorithm2e}

\begin{theorem}\label{thm:linesearch_result}
	Assume \assumlb~ and \assumlzlo~ in the deterministic setting. Then \backtrGD~(see \Cref{alg:backtrGD}) with parameters $\beta, \gamma \in (0,1)$ satisfies
	\begin{align*}
		\frac 1 \li \isum \nsq \nFxk 
		\leq \frac{4 \Lz \Dz + 14 \Lo^2 \Dz^2}{\beta \gamma \pare{1-\gamma} \li},
	\end{align*}
	where $\Delta_\fin \coloneqq F \pare{\ix \fin} - F^*$. 
\end{theorem}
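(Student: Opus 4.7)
The plan is to use the Armijo condition directly: by construction, $F(\xkp) \leq F(\xk) - \gamma \etat \nsq{\nFxk}$ for every $\ii$, so telescoping from $\ii = \fin$ to $\li$ yields $\isum \etat \nsq{\nFxk} \leq \Dz/\gamma$. The proof then reduces to establishing a uniform lower bound of the form $\etat \geq \eta_{\min}$; plugging this into the telescoped inequality and dividing by $\li$ delivers $\frac{1}{\li}\isum \nsq{\nFxk} \leq \Dz/(\gamma \eta_{\min} \li)$.

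The first ingredient is a uniform upper bound on $\norm{\nFxk}$ along the trajectory. Because the Armijo condition guarantees that $F(\xk)$ is non-increasing, $F(\xk) - F^* \leq \Dz$ for every $\ii$. Integrating the estimate of \Cref{def:lzlo_smoothness} along the segment from $x$ to $y$ produces the descent inequality
\[
F(y) \leq F(x) + \langle \nF(x), y - x \rangle + \tfrac{1}{2}\pare{A_0(c) \Lz + A_1(c) \Lo \norm{\nFx}} \nsq{y - x},
\]
valid whenever $\Lo \norm{y - x} \leq c$. Applying this at $y = x - (c_0/(\Lo \norm{\nFx})) \nF(x)$ with a fixed $c_0 \in (0, \ln 3)$ (so that $c = c_0$) and invoking $F(y) \geq F^*$ yields an estimate of the form $\norm{\nFx} \leq C_1 \Lo (F(x) - F^*) + C_2 \Lz/\Lo$ with explicit constants $C_1, C_2$; combined with $F(\xk) - F^* \leq \Dz$ this gives $\norm{\nFxk} \leq G := C_1 \Lo \Dz + C_2 \Lz/\Lo$ for every $\ii$.

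The second ingredient is a lower bound on the accepted stepsize $\etat$. Re-applying the descent inequality at $y = \xk - \eta \nFxk$ with $c = 1$ (which is valid as long as $\Lo \eta \norm{\nFxk} \leq 1$) reduces the Armijo condition to the elementary sufficient constraint $\eta \pare{2\Lz + (e-1)\Lo \norm{\nFxk}} \leq 2(1-\gamma)$. Combined with the uniform gradient bound $\norm{\nFxk} \leq G$, this produces a threshold $\eta^\star = \Theta\!\pare{(1-\gamma)/(\Lz + \Lo^2 \Dz)}$ with the property that Armijo holds at iterate $\ii$ for every $\eta \in (0, \eta^\star]$. The minimality of $k$ in \Cref{alg:backtrGD} then forces $\etat \geq \min(\etatm, \beta \eta^\star)$: either $\etatm \leq \eta^\star$, in which case Armijo already holds at $\eta = \etatm$ and $\etat = \etatm$, or $\beta^{k-1} > \eta^\star$ and hence $\etat = \beta^k > \beta \eta^\star$. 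Since $\iterationeta 0 = 1$, induction yields $\etat \geq \min(1, \beta \eta^\star)$.

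Plugging this stepsize lower bound into the telescoped inequality gives $\isum \nsq{\nFxk} \lesssim (\Lz \Dz + \Lo^2 \Dz^2)/(\beta \gamma (1-\gamma))$, and dividing by $\li$ produces the claim. The main obstacle is book-keeping the constants: both the gradient upper bound and the Armijo threshold depend on the free parameter $c$ in \Cref{def:lzlo_smoothness}, with larger $c$ tightening the gradient bound but loosening the stepsize bound, and one must balance these choices (and track the resulting algebra involving $A_0$ and $A_1$) to recover precisely the constants $4$ and $14$ appearing in the numerator.
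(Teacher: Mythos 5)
Your proposal follows essentially the same route as the paper's proof: the descent property bounds $F\atxk - F^*$ by $\Dz$, the gradient-bound lemma (which you re-derive rather than cite as \Cref{lem:app.gradient_bound}) gives a uniform gradient bound of order $\Lo \Dz + \nicefrac{\Lz}{\Lo}$, the algorithm then behaves as on an $\pare{\Lz + \Lo^2 \Dz}$-smooth function so the accepted stepsizes are bounded below, and telescoping the Armijo condition finishes. Your treatment of the stepsize lower bound via induction and the explicit condition $\Lo \eta \norm{\nFxk} \leq 1$ for the descent inequality is, if anything, slightly more careful than the paper's, and the remaining constant bookkeeping you flag is exactly where the paper's factors $4$ and $14$ come from.
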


This implies a complexity of $\Oc \pare{\pare{\Lz \Dz + \Lo^2 \Dz^2}\eps^{-2}}$, which is optimal in the dependence of $\eps$ and $L_0$ in the deterministic setting. The proof rests on the observation that \backtrGD~is a descent algorithm and hence both the function value and gradient norm remain upper bounded along the trajectory. Consequently, the algorithm behaves as if it is addressing $\pare{\Lz + \Lo C}$-smooth functions, where $C$ represents the gradient norm's upper bound. The formal proof can be found in \Cref{sec:app.missing_proofs.upper_bounds.deterministic_setting}. We have not extended our considerations to the stochastic setting for this algorithm, as a stochastic line search can potentially fail even under the stricter $L$-smoothness assumption~\citep{StochasticLinseard_Div}.

\section{Parameter-Agnostic Lower Bounds}
\label{sec:lower_bounds}

In this section, we provide the formal proofs for \Cref{sec:lower_bounds}.

\begin{algorithm2e}
	\caption{General Normalized Momentum Method}
	\label{alg:general_momentum_method}
	\DontPrintSemicolon
	\SetKwInOut{Input}{Input}
	\Input{Starting point $\iterationx \fin \in \R^d$, stepsize $\ssi > 0$, power $\al > 0$}
	$\iterationm 0 \gets 0$\;
	\For{$\iterationIndex = 1, 2, \,\hdots$}{
		Independently sample $\xit$ from the distribution of $\xi$.\;
		$\gk \gets \nf \pare{\xk, \xit}$\;
		Choose $\mk \in \cone \pare{\ls{\iterationg \fin}{\gt}} \setminus \set{0}$\;
		$\xkp \gets \xk - \frac \ssi {\ii^\al} \frac {\mk} {\norm{\mk}}$\;
	}
\end{algorithm2e}

\begin{proof}[Proof of \Cref{lem:momentumlb_functions}]
	Define $z_1 \coloneqq \frac 2 {\Lz}$ and the derivatives
	\begin{align*}
		p'(x) &\coloneqq \Lz x 		& p_1'(x) &\coloneqq p'(x) - 1, & p_2'(x) & \coloneqq p'(\ssi-x)-1,\\
		q'(x) &\coloneqq e^{\Lo x} 	& q_1'(x) &\coloneqq q'(x-z_1), & q_2'(x) & \coloneqq q'(\ssi-z_1-x).
	\end{align*}
	We now define the function $F$ via its derivative
	\begin{align*}
		F' \coloneqq 
		- \one_{(-\infty, 0)}
		+ \one_{[0, z_1)} p_1' 
		+ \one_{[z_1, \nicefrac \ssi 2)} q_1' 
		+ \one_{[\nicefrac \ssi 2, \ssi - z_1)} q_2' 
		+ \one_{[\ssi - z_1, z_2)} p_2'
		- 2 \eps \one_{[z_2, z_3)}
		+ \one_{[z_3, z_4)} h
	\end{align*}
	where $z_2 \coloneqq \ssi - z_1 + \frac{1+2\eps}{\Lz} \leq  \ssi$ and $z_3, z_4$ and $h$ will be determined later. Then $F(x) \coloneqq \Dz + \int_0^x F' d \lambda$ (see \Cref{fig:lower_bound_F}) satisfies
	\begin{align*}
		F(x) = &\ 
		\Dz 
		+ \frac 2 {\Lo} \pare{e^{\Lo \pare{\nicefrac \ssi 2 - z_1}} - 1} \one_{[\nicefrac \ssi 2, \infty)}(x)
		- \one_{(-\infty, 0)}(x) \cdot x\\
		&\ + \one_{[0, z_1)}(x) \cdot \pare{\frac{\Lz} 2 x^2 - x}
		+ \one_{[z_1, \nicefrac \ssi 2)}(x) \cdot \frac 1 {\Lo} \pare{e^{\Lo \pare{x - z_1}} - 1}\\
		&\ - \one_{[\nicefrac \ssi 2, \ssi - z_1)} \pare{x} \frac 1 {\Lo} \pare{e^{\Lo \pare{\ssi - z_1 - x}} - 1} 
		+ \one_{[\ssi - z_1, z_2)} \pare{x} \pare{x - \ssi - z_1 -\frac{\Lz \pare{x - 1 - z_1}^2}{2}}\\
		&\ - 2 \eps \one_{[z_2, z_3)}\pare{x}
		+ \one_{[z_3, z_4)}\pare{x} h\pare{x}
	\end{align*}
	and in particular
	\begin{align*}
		F \pare \ssi \geq \Dz + \frac 2 {\Lo} \pare{e^{\Lo \pare{\nicefrac \ssi 2 - z_1}} - 1}.
	\end{align*}
	By our choice of $\Lz$ we get that $\frac \ssi 2 - z_1 \geq \frac \ssi 4$ which implies $F \pare \ssi \geq \Dz + \frac 2 {\Lo} \pare{e^{\frac \Aa 4} - 1} \eqqcolon C$. We have
	\begin{align*}
		\iterationx T 
		= \ssi \sum_{\ii = 1}^{\li - 1} t^{-\al} 
		\leq \ssi \pare{1 + \frac 1 {1-\al} \pare{\pare{\li - 1}^{1-\al} - 1}}
		\leq \frac \ssi {1-\al} \li^{1 - \al}
	\end{align*}
	and hence
	\begin{align*}
		F \pare{\iterationx \li}
		\geq C - 2\eps \pare{\iterationx \li - \ssi}
		\geq 2 \ssi \eps + C - \frac{2\ssi \eps}{1-\al} \li^{1-\al}.
	\end{align*}
	Since
	\begin{align*}
		\li \leq \pare{\frac{1-\al}{2}}^{\frac 1 {1 - \al}} \pare{\frac{\Dz} \ssi + \frac 2 \Aa \pare{e^{\frac \Aa 4} - 1}}^{\frac 1 {1 - \al}}\eps^{-\frac 1 {1 - \al}}
	\end{align*}
	now implies that $F \pare{\iterationx \li} \geq 2 \ssi \eps$, the gradient of $F$ at $\iterationx \li$ is still $2\eps > \eps$ and we have not yet reached an $\eps$-stationary point. Finally we are left with the task of flattening $F$ out while making sure it never attains negative values and is still \LzLo-smooth. Therefore set $z_3 \coloneqq \ssi + \frac{C}{2 \eps}$ and $z_4 \coloneqq z_3 + \frac{2\eps}{\Lz}$. Now let $h(x) \coloneqq p'(x-z_3) - 2\eps$ and note that this achieves the exact goal we were aiming for.
	
	The only thing left to do, is to show that $F$ is indeed \LzLo-smooth. It is clear that $F$ is \LzLo-smooth on each of the subintervals $(-\infty, 0), ..., [z_3, z_4), [z_4, \infty)$. The claim hence follows from the upcoming \Cref{lem:lb.helper_lem}.
\end{proof}

\begin{lemma}\label{lem:lb.helper_lem}
	Let $I \subseteq \R$ be an interval, $a \in I$ and set $I_- \coloneqq \set{x \in I \mid x \leq a}$, $I_+ \coloneqq \set{x \in I \mid x \geq a}$. Further Let $f \colon \R \to \R$ be continuously differentiable and suppose that $f$ satisfied the inequality from \Cref{def:lzlo_smoothness} on $I_+$ and $I_-$. Then the inequality is also satisfied on $I$, i.e.~it also holds for $x \in I_-, y \in I_+$.
\end{lemma}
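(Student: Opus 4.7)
The plan is to split $|x-y| = |x-a| + |a-y|$, apply the assumed $(L_0, L_1)$-smoothness on each half separately, and then to verify that the functional forms of $\Az$ and $\Ao$ in \Cref{def:lzlo_smoothness} were tailored precisely so that the two halves recombine into the required inequality across $a$.

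First I will handle degenerate cases: if $x = a$ or $y = a$ the inequality is immediate from the hypothesis on $I_+$ or $I_-$; if $\Lo = 0$ the claim reduces to standard $\Lz$-Lipschitzness via the usual triangle inequality, using that $\Az(c) \to 1$ as $c \to 0^+$. So I assume $\Lo > 0$ and $x < a < y$, and set $s \coloneqq a - x > 0$, $t \coloneqq y - a > 0$, so that $|x - y| = s + t$. A short calculation shows that both $\Az$ and $\Ao$ are nondecreasing on $(0, \infty)$; the RHS of the target inequality is therefore monotone in $c$, so it suffices to prove the case $c = \Lo(s+t)$. Accordingly, set $c_1 \coloneqq \Lo s$ and $c_2 \coloneqq \Lo t$, so that $c_1 + c_2 = c$ and the smoothness hypothesis can be applied to $(x,a)$ with parameter $c_1$ on $I_-$ and to $(a,y)$ with parameter $c_2$ on $I_+$.

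Next I will assemble the bounds. The $I_-$-hypothesis gives $\norm{\nf(x) - \nf(a)} \le (\Az(c_1)\Lz + \Ao(c_1)\Lo \norm{\nf(x)}) s$, and the ordinary triangle inequality then furnishes $\norm{\nf(a)} \le \norm{\nf(x)} + (\Az(c_1)\Lz + \Ao(c_1)\Lo\norm{\nf(x)}) s$. Substituting this into the $I_+$-hypothesis $\norm{\nf(a) - \nf(y)} \le (\Az(c_2)\Lz + \Ao(c_2)\Lo\norm{\nf(a)}) t$ and summing yields an upper bound for $\norm{\nf(x) - \nf(y)}$ of the form $\alpha \Lz + \beta \Lo \norm{\nf(x)}$, where $\alpha$ and $\beta$ are explicit expressions in $\Az(c_i), \Ao(c_i), s, t, \Lo$. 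It remains to check $\alpha \le \Az(c)(s+t)$ and $\beta \le \Ao(c)(s+t)$.

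This coefficient-matching is the main step, and is where the precise definitions of $\Az$ and $\Ao$ come in. For $\beta$, the identity $\Ao(u)\, u = e^u - 1$ makes the expression telescope cleanly via $(e^{c_1}-1)e^{c_2} + (e^{c_2}-1) = e^{c_1+c_2}-1$, matching $\Ao(c)(s+t) = (e^c-1)/\Lo$ with equality. The $\alpha$-inequality is the harder piece: after expanding $\Az, \Ao$ and cancelling common terms, it reduces to the elementary claim $t(e^{c_1} - 1) \ge s(1 - e^{-c_2})$. I will close this by squeezing both sides with $\Lo s t = c_1 t = c_2 s$, using that $(e^u-1)/u \ge 1 \ge (1 - e^{-u})/u$ for all $u > 0$. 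Combined with the earlier monotonicity reduction in $c$, this yields the inequality for every admissible $c$ and completes the argument.
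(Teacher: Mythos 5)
Your proposal is correct and follows essentially the same route as the paper's proof: split the difference quotient at $a$, propagate the bound on $\norm{\nf(a)}$ back to $\norm{\nf(x)}$, observe that the $\Ao$-coefficients telescope exactly via $(e^{c_1}-1)e^{c_2}+(e^{c_2}-1)=e^{c_1+c_2}-1$, and reduce the $\Az$-coefficient comparison to an elementary scalar inequality. Your treatment is in fact slightly more complete in two minor respects --- you justify restricting to $c=\Lo\norm{x-y}$ by the monotonicity of $\Az$ and $\Ao$, and you close the key inequality $t\pare{e^{c_1}-1}\geq s\pare{1-e^{-c_2}}$ explicitly via $c_1 t = c_2 s$, where the paper only asserts the equivalent inequality ``follows by taking partial derivatives.''
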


\begin{proof}
	W.l.o.g.~let $x \in I_-, y \in I_+$ and set $c \coloneqq \Lo \norm{x-y}$. Furthermore set $c_1 \coloneqq \Lo \norm{x-a}, c_2 \coloneqq \Lo \norm{a-y}$ and calculate
	\begin{align}\label{eq:lb.helper_lem.1}
		\begin{split}
			\norm{\nf (x) - \nf(y)}
			=&\ \norm{\nf(x) - \nf(a) + \nf(a) - \nf(y)}\\
			\leq&\ \Lz \pare{\norm{x-a}\Az \pare{c_1} + \norm{a-y}\Az \pare{c_2}}\\
			&\ + \Lo \norm{x-a} \Ao \pare{c_1} \norm{\nf(x)} + \Lo \norm{a-y} \Ao \pare{c_2} \norm{\nf(a)}.
		\end{split}
	\end{align}
	Next, since $a \in I_-$, we get that
	\begin{align*}
		\norm{\nf(a)} \leq \Lz \norm{x-a} \Az \pare{c_1} + e^{c_1} \norm{\nf(x)}
	\end{align*}
	and hence
	\begin{align*}
		\Lo \norm{a-y} \Ao \pare{c_2} \norm{\nF (a)}
		\leq&\ 
		\Lz \Lo \norm{a-y} \Ao \pare{c_2} \norm{x-a} \Az \pare{c_1}
		 + \Lo \norm{a-y} \Ao \pare{c_2} e^{c_1} \norm{\nf(x)}\\
		=&\ \Lz \pare{e^{c_2} - 1}\norm{x-a}\Az\pare{c_1}
		 + \Lo \norm{a-y} \Ao \pare{c_2} e^{c_1} \norm{\nf(x)}
	\end{align*}
	We now plug this result into \eqref{eq:lb.helper_lem.1} and rearrange to obtain
	\begin{align}\label{eq:lb.helper_lem.2}
		\begin{split}
			\norm{\nf (x) - \nf(y)}
			&\leq 
			\Lz \pare{e^{c_2} \norm{x-a}\Az\pare{c_1} + \norm{a-x}\Az\pare{c_2}}\\
			&~~~+ \Lo \norm{\nf(x)} \pare{\norm{x-a}\Ao \pare{c_1} + \norm{a-y} \Ao \pare{c_2}e^{c_1}}.
		\end{split}
	\end{align}
	Now we focus on the second term, involving $\Lo \norm{\nf(x)}$. Therefore we calculate
	\begin{align*}
		\begin{split}
			&\norm{x-a}\Ao \pare{c_1} + \norm{a-y} \Ao \pare{c_2}e^{c_1}\\
			&= \frac{e^{\Lo \norm{x-a}} - 1}{\Lo} + \frac{e^{\Lo \norm{x-y}} - e^{\Lo \norm{x-a}}}{\Lo}\\
			&= \Ao \pare{c} \norm{x-y}.
		\end{split}
	\end{align*}
	Next we focus on the first term in \eqref{eq:lb.helper_lem.2}, which corresponds to the $\Lz$-dependence. Calculating yields
	\begin{align*}
		\begin{split}
			&e^{c_2} \norm{x-a}\Az\pare{c_1} + \norm{a-y}\Az\pare{c_2}\\
			&= \norm{x-a}e^{\Lo \norm{a-y}} + \norm{x-a}e^{\Lo \norm{x-y}} - \frac{e^{\Lo \norm{x-y}} - e^{\Lo \norm{a-y}}}{\Lo}\\
			&~~~+ \norm{a-y} + \norm{a-y}e^{\Lo \norm{a-y}} - \frac{e^{\Lo \norm{a-y}} - 1}{\Lo}\\
			&= \norm{a-y} + \norm{x-y}e^{\Lo \norm{a-y}} + \norm{x-a}e^{\Lo \norm{x-y}} - \frac{e^{\Lo \norm{x-y}} - 1}{\Lo}\\
			&\leq \norm{x-y} + \norm{x-y}e^{\Lo \norm{x-y}} - \frac{e^{\Lo \norm{x-y}} - 1}{\Lo} = \Az \pare{\Lo \norm{x-y}} \norm{x-y}.
		\end{split}
	\end{align*}
	In the last inequality we used that for all $a, b, \Lo \geq 0$ the following inequality holds: $b + (a+b)e^{\Lo b} + be^{\Lo \pare{a+b}} \leq a + b + \pare{a+b}^{\Lo \pare{a+b}}$. This follows by taking partial derivatives with respect to $\Lo$. Finally we plug everything into \eqref{eq:lb.helper_lem.2} and obtain
	\begin{align*}
		\norm{\nf \pare{x} - \nf \pare y}
		\leq \pare{\Az \pare{c} \Lz + \Ao \pare{c} \Lo \norm{\nf \pare x}} \norm{x-a}.
	\end{align*}
	This finishes the proof.
\end{proof}

\section{Experiments}\label{sec:experiments}
In this section, we present experiments designed to empirically validate the theoretical findings of this paper. In concordance with our theory, the primary focus is to demonstrate the robustness of \nsgdm{} to hyperparameter selection in the context of \LzLo-smoothness. Language modeling tasks with LSTM and Transformer architectures are well-known settings for which \LzLo-smoothness was empirically confirmed to be necessary \citep{ComponentWise,Zhang20}. We therefore focus on these tasks.

\begin{figure}
	\centering
	\begin{subfigure}[c]{0.45\textwidth}
		\includegraphics[width=\textwidth]{./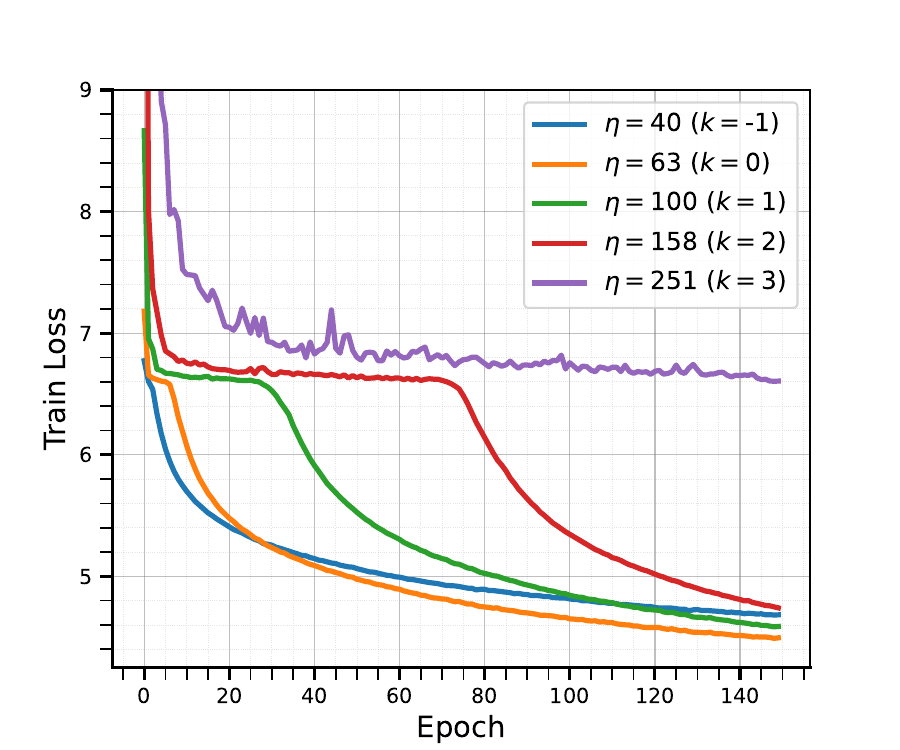}
		\caption{Training curves of \nsgdm.}
		\label{fig:ptb_nsgdm_per_epochs}
	\end{subfigure}
	\hfill 
	\begin{subfigure}[c]{0.45\textwidth}
		\includegraphics[width=\textwidth]{./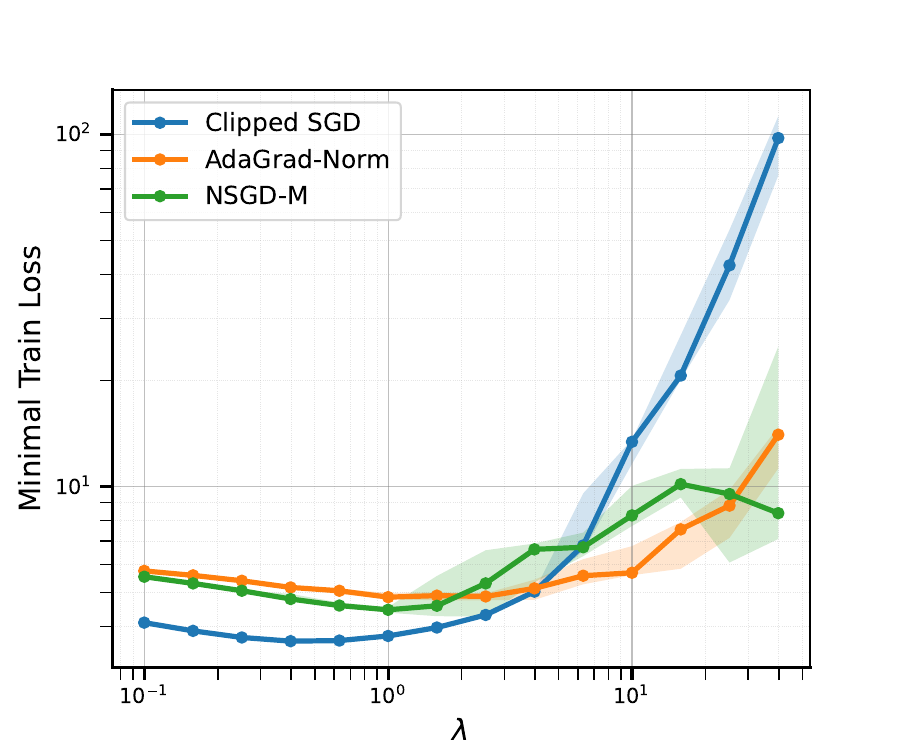}
		\caption{Minimal training loss.}
		\label{fig:ptb_comparison_train}
	\end{subfigure}
	\caption{Results on the PTB dataset. \Cref{fig:ptb_nsgdm_per_epochs} represents the training curves of \nsgdm{} for stepsizes $\ssi = 10^{\nicefrac k 5} \cdot \ssi_{\alg{opt}}$, where $\ssi_{\text{\alg{opt}}} = 63$ and $k \in \set{-1, 0, 1, 2, 3}$. \Cref{fig:ptb_comparison_train} shows the best train loss within $150$ epochs of different algorithms with stepsizes $\lambda \cdot \eta_{\text{\alg{opt}}}$. Shaded areas represent the minimal and maximal value within 5 seeds, the line the median.}
	\label{fig:ptb}
\end{figure}

\paragraph{Experimental Setup.} To match the assumptions of our theory, we conduct training on the Penn Treebank (PTB) \citep{PTB2010Mikolov} and WikiText-2 \citep{Merity2017Wikitext} datasets using the AWD-LSTM architecture \citep{LSTM2018Merity}. Hyperparameters of the model were chosen according to \citep{LSTM2018Merity}. Besides \nsgdm, we also include \alg{AdaGrad-Norm} \citep{BeyondUniform} and Clipped \alg{SGD} \citep{Zhang20}. The clipping threshold for Clipped \alg{SGD} was fixed to be $0.25$ in concordance to previous work \citep{Zhang20}, the decay-rates of \nsgdm{} were chosen according to \Cref{thm:main_result} and $b_0$ of \alg{AdaGrad-Norm} was set to be $b_0 = 10^{-6}$. The code is based on the experiments by \citet{ImprovedAnalysis}.

\paragraph{Penn Treebank.} 
For each algorithm, we select the optimal stepsize $\ssi_{\text{\alg{opt}}}$ using a course grid search in a 50 epoch training. The final training was then carried out for $150$ epochs with stepsizes $\ssi = \lambda \cdot \ssi_{\text{\alg{opt}}}$, where $\lambda = 10^{\nicefrac k 5}, k \in \set{-5, -4, \dots, 8}$. We replicated this procedure with five seeds for reliable results.

\Cref{fig:ptb_nsgdm_per_epochs} shows the behaviour of \nsgdm{} with different stepsizes. The result supports the narrative behind \Cref{thm:main_result} that \nsgdm{} needs an adaption phase before transitioning to a convergence phase. Only after reaching a threshold, \nsgdm{} starts to decrease the loss. 
\Cref{fig:ptb_comparison_train} focuses on the robustness to hyperparameter selection. It compares the smallest training loss across 150 epochs of different algorithms on scaled versions of their optimally tuned stepsize. As expected, well-tuned Clipped \alg{SGD} with constant stepsize outperforms all decaying algorithms, while decaying algorithms are more robust to untuned stepsizes. Between \nsgdm{} and \alg{AdaGrad-Norm} we notice that \nsgdm{} has slightly preferable behaviour for small stepsizes. Furthermore the trend for large stepsizes points towards a more robust behaviour of \nsgdm.

\paragraph{WikiText-2.} For each algorithm we first chose the optimal stepsize $\ssi_{\text{\alg{opt}}}$ based on a course grid search in a 20 epoch training. The final training was then carried out for $150$ epochs with stepsizes $\ssi = \lambda \cdot \ssi_{\text{\alg{opt}}}$, where $\lambda = 10^{\nicefrac k 3}, k \in \set{-3, -4, \dots, 5}$. We replicated this procedure with three seeds for reliable results.

\begin{figure}
	\centering
	\begin{subfigure}[c]{0.45\textwidth}
		\includegraphics[width=\textwidth]{./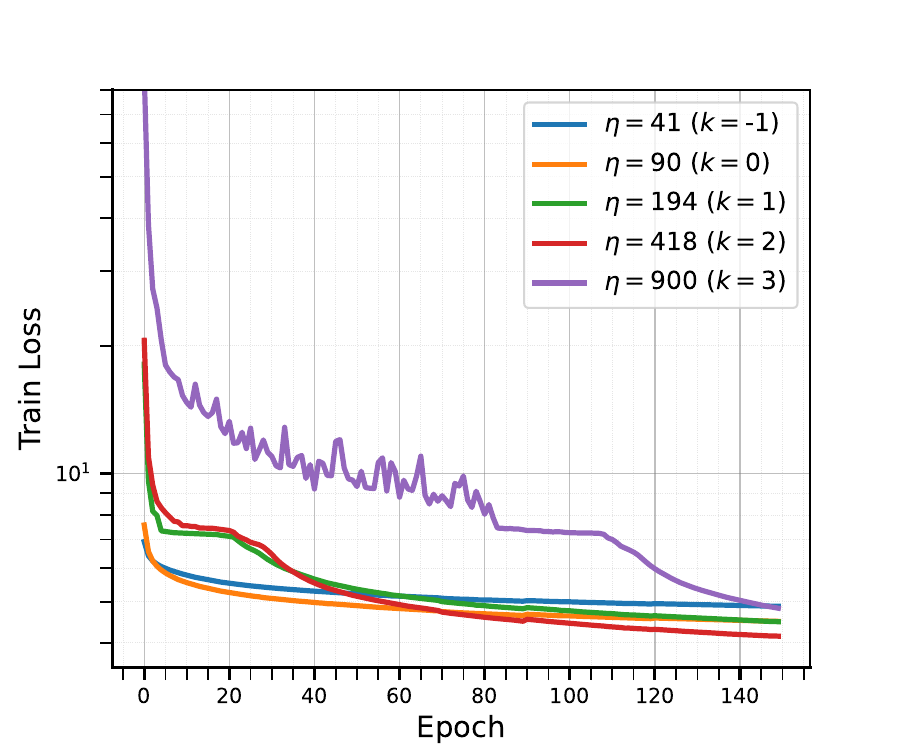}
		\caption{Training curves of \nsgdm.}
		\label{fig:wikitext_nsgdm_per_epoch}
	\end{subfigure}
	\hfill
	\begin{subfigure}[c]{0.45\textwidth}
		\includegraphics[width=\textwidth]{./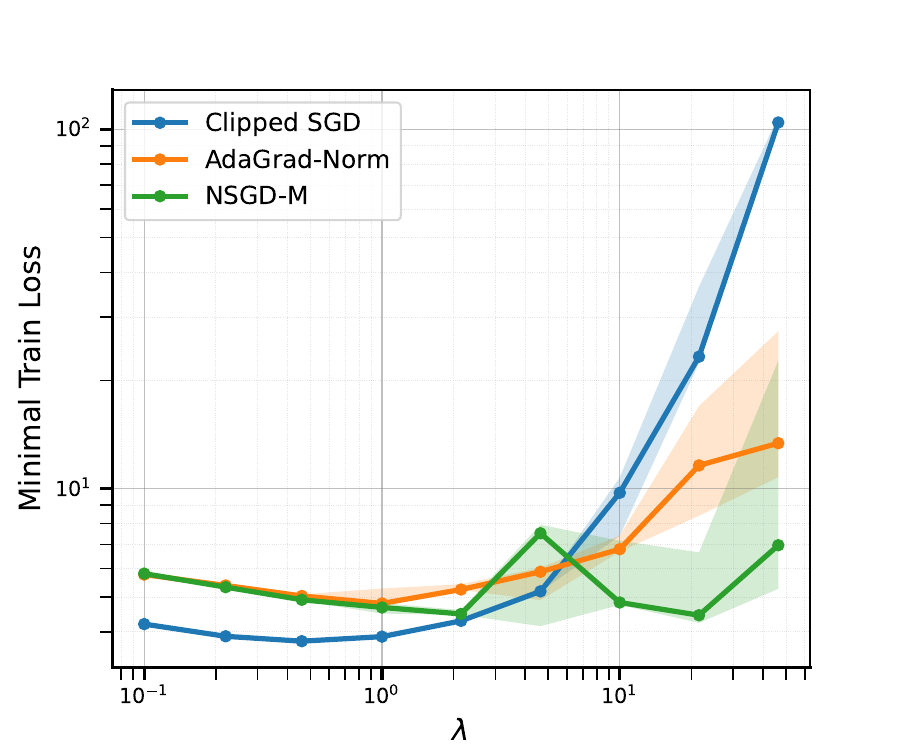}
		\caption{Minimal training loss.}
		\label{fig:wikitext_comparison_train}
	\end{subfigure}
	\caption{Results on the WikiText-2 dataset. \Cref{fig:wikitext_nsgdm_per_epoch} represents the logarithmic training curves of \nsgdm{} for stepsizes $\ssi = 10^{\nicefrac k 3} \cdot \ssi_{\alg{opt}}$, where $\ssi_{\text{\alg{opt}}} = 90$ and $k \in \set{-1, 0, 1, 2, 3}$. \Cref{fig:wikitext_comparison_train} shows the best train loss within $150$ epochs of different algorithms with stepsizes $\lambda \cdot \eta_{\text{\alg{opt}}}$. Shaded areas represent the minimal and maximal value within 3 seeds, the line the median.}
	\label{fig:wikitext}
\end{figure}

In \Cref{fig:wikitext_nsgdm_per_epoch} we can again notice the same threshold behaviours for \nsgdm{} as experienced on the PTB dataset. Instead of a plateau we do however observe higher trainings losses before the fast decrease. Training curves of Clipped \alg{SGD} and \alg{AdaGrad-Norm} can be found in \Cref{fig:wikitext_training_curves}.
\Cref{fig:wikitext_comparison_train} showcases the robustness of \nsgdm{} to hyperparmeter-tuning to an greater extend than \Cref{fig:ptb_comparison_train}. We can see that \nsgdm{} outperforms \alg{AdaGrad-Norm} for nearly all stepsizes, with the gap increasing as stepsizes increase relative to the optimal stepsize. While Clipped \alg{SGD} outperforms the adaptive methods when using the optimally-tuned stepsize or less, it suffers from an order of magnitude higher training loss as stepsizes increase relative to the optimally tuned stepsize. When compared to \Cref{fig:ptb_comparison_train}, a large improvement in performance can be noticed for \nsgdm. We offer the following explanation: While, in both cases, we trained for 150 epochs, the training on the smaller PTB dataset consisted of roughly 680 batches per epoch. On the larger WikiText-2 dataset, epochs consisted of roughly 1500 batches, increasing the total number of iterations from roughly $100 000$ to roughly $230 000$. When assuming similar values of $\Lz, \Lo$, \nsgdm{} hence more likely reached the threshold needed, entering the fast convergence phase, while \alg{AdaGrad-Norm} behaves more steadily, as can be seen in \Cref{fig:wikitext_adagrad_per_epoch}.

\begin{figure}
	\centering
	\begin{subfigure}[c]{0.45\textwidth}
		\includegraphics[width=\textwidth]{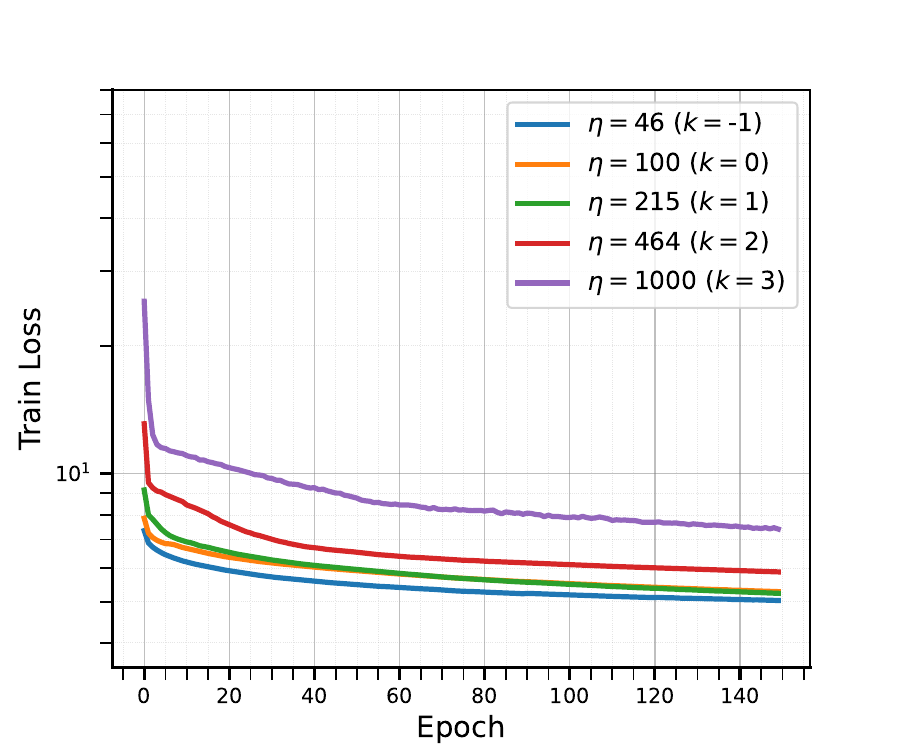}
		\caption{\alg{AdaGrad-Norm} with $\ssi_{\text{\alg{opt}}} = 100$.}
		\label{fig:wikitext_adagrad_per_epoch}
	\end{subfigure}
	\hfill
	\begin{subfigure}[c]{0.45\textwidth}
		\includegraphics[width=\textwidth]{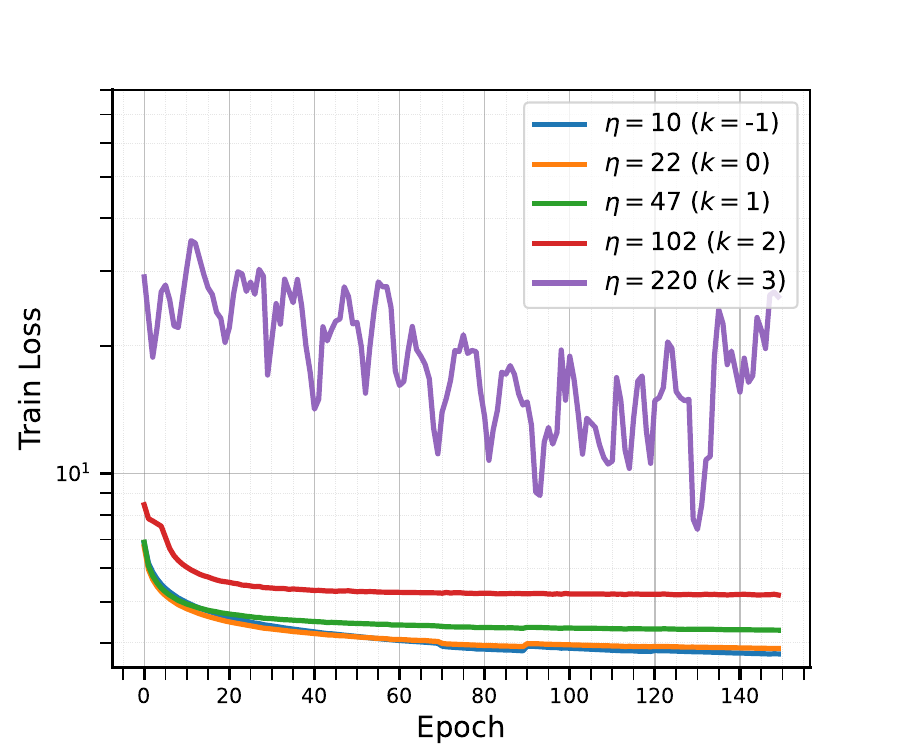}
		\caption{Clipped \alg{SGD} with $\ssi_{\text{\alg{opt}}} = 22$.}
		\label{fig:wikitext_sgd_clip_per_epoch}
	\end{subfigure}
	\caption{Logarithmic training curves of \alg{AdaGrad-Norm} and Clipped \alg{SGD} on WikiText-2 for stepsizes $\ssi = 10^{\nicefrac k 3} \cdot \ssi_{\alg{opt}}$ with $k \in \set{-1, 0, 1, 2, 3}$.}
	\label{fig:wikitext_training_curves}
\end{figure}

\section{Conclusion}\label{sec:concl}
In this work, we conduct a theoretical investigation into parameter-agnostic algorithms under the \LzLo-smoothness assumption. In the stochastic setting, we show that without requiring any knowledge about problem parameters, Normalized Stochastic Gradient Descent with Momentum (\nsgdm) converges at an order-optimal rate, albeit with an exponential term in $L_1$. Further, we introduce a lower bound framework specifically for the parameter-agnostic context, revealing that this exponential term is inescapable for a family of \genmommeth\textcolor{linkcolour}{s}. In the deterministic setting, we show the exponential dependency can be circumvented using \backtrGD~while being parameter-agnostic. 

This work motivates several questions for future research. The most pressing one is whether there exists a fully parameter-agnostic algorithm in the stochastic setting without an exponential term. Another interesting topic is the derivation of lower bounds for all first-order parameter-agnostic methods. 

\bibliography{files/StochasticFirstOrder}

\clearpage
\appendix
\section{Basic Properties of \texorpdfstring{\LzLo}{(L0, L1)}-Smoothness}
\label{sec:app.basic_properties}
In this section, we prove basic properties of \LzLo-Smoothness. We start with the proof of the relation to the original definition by \cite{Zhang20}.

\begin{proof}[Proof of \Cref{lem:equivalence_different_lzlo_defs}]
	``$\Rightarrow$": This implication was already shown by \citet[Corollary A.4]{ImprovedAnalysis}.
	
	``$\Leftarrow$": We slightly adapt the proof by \citet[Proposition 1]{BeyondUniform}. Assume $F$ is \LzLo-smooth according to \Cref{def:lzlo_smoothness}. Let $x,s\in\R^d$ with $\norm s = 1$. For $\al > 0$ our assumption gives
	\begin{align*}
		\norm{\nF \pare{x + \al s} - \nFx} 
		\leq \pare{\Az \pare{\al \Lo} \Lz + \Ao \pare{\al \Lo} \Lo \norm{\nFx}} \al,
	\end{align*}
	and hence,
	\begin{align*}
		\norm{\frac{\nF \pare{x + \al s} - \nFx}{\al}}
		\leq \Az \pare{\al \Lo} \Lz + \Ao \pare{\al \Lo} \Lo \norm{\nFx}.
	\end{align*}
	Using the continuity of norms and the assumption that $F$ is twice continously differentiable, we get
	\begin{align*}
		\Lz + \Lo \norm{\nFx}
		&= \lim_{\al \to 0} \Az \pare{\al \Lo} \Lz + \Ao \pare{\al \Lo} \Lo \norm{\nFx}\\
		&\geq \lim_{\al \to 0} \norm{\frac{\nF \pare{x + \al s} - \nFx}{\al}}\\
		&= \norm{\lim_{\al \to 0} \frac{\nF \pare{x + \al s} - \nFx}{\al}}\\
		&= \norm{\nabla^2F(x) s}.
	\end{align*}
	Taking the $\sup$ over all such $s$ yields the claim.
\end{proof}

The following lemma serves as the \LzLo-smooth counterpart to the well-known quadratic upper bound on the function value change in the $L$-smooth setting.

\begin{lemma}[{c.f.~\cite[Lemma A.3]{ImprovedAnalysis}}]\label{lem:app.lzlo_property}
	Let $d \in \Ngeq$ and $\Lz, \Lo \geq 0$. Assume that $f \colon \R^d \to \R$ is \LzLo-smooth. Then all $x,y\in \R^d$ satisfy
	\begin{align*}
		f(y) \leq 
		f(x) + \nf(x)^\top (y-x) 
		+ \frac 1 2\pare{\Bz \pare{\Lo \norm{x-y}}\Lz 
			+ \Bo \pare{\Lo\norm{x-y}} \Lo \norm{\nf (x)}} \nsq{x-y}, 
	\end{align*}
	where
	\begin{align*}
		\Bz \pare c &= 1 + 2\frac{e^c - 1} c - 4 \frac{e^c - 1 - c}{c^2},\\
		\Bo \pare c &= 2\frac{e^c - 1 - c}{c^2}
	\end{align*}
	tend to 1 as $c$ tends towards 0.
	
	\begin{proof}
		\newcommand{\tempConst}{c}
		This proof closely follows the arguments from \cite{ImprovedAnalysis}. We include the proof for completeness. Let $x,y \in \R^d$ and calculate
		\begin{align*}
			f(y) - f(x) - \nf (x)^\top (y-x)
			&= \int_0^1 \nf \pare{x+t(y-x)}^\top (y-x) dt - \nf (x)^\top (y-x)\\
			&\leq \int_0^1 \norm{\nf \pare{x+t(y-x)} - \nf(x)} \norm{x-y}dt\\
			&\leq \norm{x-y}^2 \pare{\Lz \int_0^1 t\Az \pare{t \tempConst} dt + \Lo \norm{\nf \pare x} \int_0^1 t\Ao \pare{t\tempConst} dt}
		\end{align*}
		where $\tempConst \coloneqq \Lo \norm{x-y}$. We now calculate
		\begin{align*}
			\int_0^1 t\Az \pare{t\tempConst}
			&= \frac 1 2 + \frac{e^c - 1} c - 2 \frac{e^c - 1 - c}{c^2} \eqqcolon \frac 1 2 \Bz \pare{c}
		\end{align*}
		and
		\begin{align*}
			\int_0^1 \Ao(t\tempConst) dt 
			= \frac{e^c-1-c}{c^2} \eqqcolon \frac 1 2 \Bo \pare{c}.
		\end{align*}
		This shows the claim.
	\end{proof}
\end{lemma}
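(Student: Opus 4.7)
The plan is to start from the fundamental-theorem-of-calculus identity
\[
	f(y) - f(x) - \nf(x)^\top (y-x) = \int_0^1 \bigl(\nf\pare{x + t(y-x)} - \nf(x)\bigr)^\top (y-x)\, dt,
\]
bound the right-hand side via Cauchy--Schwarz by $\norm{y-x}\int_0^1 \norm{\nf\pare{x+t(y-x)} - \nf(x)}\, dt$, and then invoke \Cref{def:lzlo_smoothness} on each point $x + t(y-x)$ paired with $x$. The key is to choose the ``$c$'' parameter in the definition appropriately: at step $t$ the two points are at distance $t\norm{y-x}$, so taking $c = t \Lo \norm{y-x}$ satisfies the precondition $\Lo \norm{x - (x+t(y-x))} \leq c$ with equality, which yields
\[
	\norm{\nf\pare{x+t(y-x)} - \nf(x)} \leq \bigl(\Az(t \Lo \norm{y-x}) \Lz + \Ao(t\Lo\norm{y-x}) \Lo \norm{\nf(x)}\bigr) t\norm{y-x}.
\]

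Substituting this bound back and pulling out $\norm{y-x}^2$ reduces the problem to evaluating the two scalar integrals
\[
	\int_0^1 t\, \Az(t c)\, dt \qquad \text{and} \qquad \int_0^1 t\, \Ao(tc)\, dt,
\]
with $c \coloneqq \Lo \norm{x-y}$. Plugging in the definitions $\Az(s) = 1 + e^s - \frac{e^s-1}{s}$ and $\Ao(s) = \frac{e^s-1}{s}$, these integrals can be computed in closed form by elementary antidifferentiation (using $\int_0^1 e^{tc} dt = \frac{e^c-1}{c}$ and $\int_0^1 t e^{tc} dt = \frac{e^c}{c} - \frac{e^c-1}{c^2}$). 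Matching terms then produces exactly $\tfrac12 \Bz(c)$ and $\tfrac12 \Bo(c)$ with the claimed formulas, which gives the stated inequality after multiplying through by $\Lz$ and $\Lo \norm{\nf(x)}$ respectively.

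The final claim that $\Bz(c), \Bo(c) \to 1$ as $c \to 0$ is a short Taylor-expansion check: $e^c - 1 = c + \tfrac{c^2}{2} + O(c^3)$ and $e^c - 1 - c = \tfrac{c^2}{2} + O(c^3)$, so $\frac{e^c-1}{c} \to 1$ and $\frac{e^c-1-c}{c^2} \to \tfrac12$, which yields $\Bz(c) \to 1 + 2 - 2 = 1$ and $\Bo(c) \to 1$ (consistent with the ``continuous extension'' convention already announced in the paper's Notation). There is no real obstacle here: the argument is almost purely mechanical once the precondition of \Cref{def:lzlo_smoothness} is verified. The only mildly delicate point is the choice of $c$ — one must avoid using a constant $c$ that would impose $\Lo \norm{y-x} \leq c$ globally, and instead let $c$ depend on $t$, so that the bound is tight and yields the advertised constants rather than looser ones.
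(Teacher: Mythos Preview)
Your proposal is correct and follows essentially the same approach as the paper: the fundamental-theorem-of-calculus identity, Cauchy--Schwarz, then applying \Cref{def:lzlo_smoothness} with the $t$-dependent parameter $c = t\Lo\norm{x-y}$, and finally evaluating the two resulting integrals in closed form. Your explicit Taylor-expansion verification of the limits $\Bz(c), \Bo(c) \to 1$ is a small addition the paper leaves implicit.
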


Analogous to the $L$-smooth setting, we can also derive an upper bound for the gradient norm based on the suboptimality gap.

\begin{lemma}[Gradient Bound, {c.f.~\cite[Lemma A.5]{ImprovedAnalysis}}]\label{lem:app.gradient_bound}
	Let $\Lz, \Lo > 0$ and assume that $f\colon \R^d \to \R$ is \LzLo-smooth. Further assume that $f$ is lower bounded by $f^*$. Then all $x \in \R^d$ satisfy
	\begin{align*}
		\min \set{\frac {\norm{\nf (x)}} {\Lo}, \frac {\nsq{\nf(x)}} {\Lz}} \leq 8 \pare{f(x) - f^*}.
	\end{align*}
\end{lemma}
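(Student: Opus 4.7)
The plan is to apply the descent lemma from \Cref{lem:app.lzlo_property} at the trial point $y = x - \eta \nabla f(x)$, where the stepsize $\eta$ is chosen adaptively depending on whether the $L_0$- or $L_1$-term dominates, and then use lower-boundedness $f(y)\geq f^*$ to rearrange into a bound on the gradient.

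First, I would plug $y = x - \eta \nabla f(x)$ into \Cref{lem:app.lzlo_property} to obtain
\begin{align*}
    f(y) \leq f(x) - \eta \|\nabla f(x)\|^2 + \tfrac{\eta^2}{2}\!\left(B_0\!\left(\eta L_1 \|\nabla f(x)\|\right) L_0 + B_1\!\left(\eta L_1 \|\nabla f(x)\|\right) L_1 \|\nabla f(x)\|\right)\|\nabla f(x)\|^2.
\end{align*}
The key observation is that $B_0(c)$ and $B_1(c)$ are bounded by modest absolute constants as long as their argument $c$ is bounded (e.g.\ by $1$), so the entire proof reduces to picking $\eta$ so that $\eta L_1 \|\nabla f(x)\|$ stays below such a threshold while the linear term $-\eta \|\nabla f(x)\|^2$ still dominates the quadratic term.

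I would split into two cases. In the first case $L_0 \geq L_1 \|\nabla f(x)\|$, I would take $\eta$ of order $1/L_0$; then $\eta L_1\|\nabla f(x)\| \leq 1$, so both $B_0,B_1$ are bounded constants, and the bracket in the descent inequality is $\Oc(L_0)$. Combining with $f(y) \geq f^*$ and rearranging yields $\|\nabla f(x)\|^2 \leq C L_0 (f(x) - f^*)$ for an explicit small constant $C \leq 8$. In the second case $L_0 < L_1 \|\nabla f(x)\|$, I would instead take $\eta$ of order $1/(L_1\|\nabla f(x)\|)$; again $\eta L_1\|\nabla f(x)\|$ is bounded by $1$, the bracket becomes $\Oc(L_1\|\nabla f(x)\|)$, and the same rearrangement gives $\|\nabla f(x)\| \leq C L_1 (f(x) - f^*)$. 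Together these two cases cover all $x$ and give exactly the claimed bound involving the minimum.

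The only real obstacle is constant-tracking: I need to choose the proportionality in $\eta$ small enough (a factor of $3$ suffices, using $B_0(1/3)+B_1(1/3)\leq 3$) to guarantee that the quadratic term absorbs at most half of the linear term, while ensuring the resulting prefactor on $f(x)-f^*$ is at most $8$. There is no conceptual subtlety beyond this bookkeeping, since both $B_0$ and $B_1$ are monotonically increasing and continuous on $[0,\infty)$ with value $1$ at $0$.
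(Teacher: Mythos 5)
Your proposal is correct and takes essentially the same route as the paper: both apply \Cref{lem:app.lzlo_property} at the trial point $y = x - \eta \nabla f(x)$ with $\eta$ of order $\pare{\Lz + \Lo \norm{\nabla f(x)}}^{-1}$, invoke $f(y) \geq f^*$, and rearrange, with the case split governed by whether $\Lo \norm{\nabla f(x)}$ exceeds $\Lz$. The only difference is bookkeeping --- the paper fixes a single stepsize via a small fixed-point equation in the argument of $\Ao$ and case-splits at the very end, whereas you case-split first and choose an explicit stepsize in each branch; your constants check out (indeed yielding $6$ in place of $8$).
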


\begin{proof} This proof is again based on \cite{ImprovedAnalysis}. We include it since we require parts of the proof later.
	Let $x \in \R^d$. Firstly note that, for $\Ao$ from \Cref{def:lzlo_smoothness}, the equation
	\begin{align*}
		c = \frac{\Lo \norm{\nf (x)}} {\Ao(c) \Lz + \Lo \Ao (c) \norm{\nf(x)}}
	\end{align*}
	has a solution $c \in (0,1)$. Now we set $\lambda \coloneqq \frac 1 {2\Ao(c)\pare{\Lz + \Lo\norm{\nf(x)}}}$ and	$y \coloneqq x - \lambda \nf (x)$.
	Then \Cref{lem:app.lzlo_property} yields
	\begin{align*}
		f^* \leq f(y)
		&\leq f(x) - \lambda \nsq{\nf(x)} + \Ao\pare{c} \pare{\Lz + \Lo \norm{\nf (x)}} \lambda^2 \nsq{\nf(x)}
		= f(x) - \frac \lambda 2 \nsq{\nf(x)}.
	\end{align*}
	We now differentiate between the two cases $\norm{\nf(x)} \leq \frac {\Lz} {\Lo}$ and $\norm{\nf(x)} > \frac {\Lz} {\Lo}$. Therefore,
	\begin{align*}
		2 \pare{f(x) - f^*} 
		&\geq \frac {\nsq{\nf(x)}} {\Ao(c) \pare{\Lz + \Lo \norm{\nf(x)}}}
		\geq 
		\begin{cases}
			\frac{\nsq{\nf(x)}}{4\Lz}, &\text{if} \norm{\nf(x)} \leq \frac {\Lz} {\Lo}\\
			\frac{\norm{\nf(x)}}{4 \Lo}, &\text{otherwise.}
		\end{cases}
	\end{align*}
	This shows the claim.
\end{proof}

\newpage
\section{Technical Lemmas}
\label{sec:app.technical_lemmas}
This section presents crucial technical lemmas and their proofs. These results may be of interest on their own as they can potentially be applied in the analysis of other momentum-based algorithms.

{\renewcommand{\li}{b}
	\begin{lemma}[Technical Lemma]
		\label{lem:technical.general}
		Let $q \in (0,1), p \geq 0$ and $\ii > 0$. Further let $a,b \in \N_{\geq 2}$ with $a \leq b$. Then the following statements are true.
		\begin{itemize}
			\item[$i$)] We have
			\begin{align*}
				\prod_{\ii = a}^b \pare{1-\ii^{-q}}
				&\leq \exp\pare{\frac 1 {1-q} \pare{a^{1-q} - b^{1-q}}}.
			\end{align*}
			\item[$ii$)] If $p \geq q$, then
			\begin{align*}
				\sum_{\ii = a}^\li t^{-p} \prod_{\tau = a}^\ii \pare{1-\tau^{-q}}
				&\leq \frac 
				{\pare{a-1}^{q-p} \exp\pare{\frac{a^{1-q} - \pare{a-1}^{1-q}}{1-q}} - \li^{q-p} \exp\pare{\frac{a^{1-q} - \li^{1-q}}{1-q}}}
				{1 + \pare{p-q}\li^{q - 1}},
			\end{align*}
			and in particular,
			\begin{align*}
				\sum_{\ii = a}^\li t^{-p} \prod_{\tau = a}^\ii \pare{1-\tau^{-q}}
				&\leq \pare{a-1}^{q-p} \exp\pare{\frac{a^{1-q} - \pare{a-1}^{1-q}}{1-q}} = \Oc \pare{a^{q-p}}.
			\end{align*}
			\item[$iii$)] (c.f.~{\cite[Lemma 15]{Ilyas23}}\footnote{Note that the proof in the paper has a typo in the last line of page 42. Instead of $(1-q)$ the authors meant $(1-q)^{-1}$.}) 
			If $a \geq p^{\frac 1 {1 - q}}$ and $a \geq \pare{\frac{p - q} 2}^{\frac 1 {1-q}}$, then
			\begin{align*}
				\sum_{\ii = a}^b \ii^{-p} \prod_{\tau = \ii + 1}^b \pare{1-\tau^{-q}}
				\leq 2\exp\pare{\frac 1 {1-q}}\pare{b+1}^{q-p}.
			\end{align*}
			Note that these requirements are always fulfilled for $p \leq 1$.
		\end{itemize}
	\end{lemma}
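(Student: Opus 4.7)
The plan is to dispose of the three parts in order, each relying on the standard sum-versus-integral comparison together with a carefully chosen antiderivative.

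Part $i)$ is the easiest: apply $\log(1-x) \leq -x$ to turn the product into a sum, then use monotonicity of $x\mapsto x^{-q}$ to see that $\ii^{-q} \geq \int_{\ii}^{\ii+1} x^{-q}\,dx$ and hence $\sum_{\ii=a}^b \ii^{-q} \geq \int_a^{b+1} x^{-q}\,dx \geq (b^{1-q}-a^{1-q})/(1-q)$. Exponentiating yields the claim.

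For part $ii)$, I would first invoke $i)$ to bound the inner product by $\exp((a^{1-q}-\ii^{1-q})/(1-q))$, so the summand becomes $\phi(\ii)\coloneqq \ii^{-p}\exp((a^{1-q}-\ii^{1-q})/(1-q))$. A direct computation shows that $\phi$ is decreasing on $(0,\infty)$, hence $\sum_{\ii=a}^b\phi(\ii) \leq \int_{a-1}^b \phi(x)\,dx$. The main trick is to recognise the antiderivative $G(x)\coloneqq x^{q-p}\exp((a^{1-q}-x^{1-q})/(1-q))$, which satisfies
\begin{align*}
G'(x) = -\phi(x)\,\bigl[1+(p-q)\,x^{q-1}\bigr].
\end{align*}
Since $p\geq q$ and $q\in(0,1)$, the bracket is positive and decreasing in $x$, and therefore bounded below by $1+(p-q)\,b^{q-1}$ on $[a-1,b]$. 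Pulling this constant out of the integral gives
\begin{align*}
\int_{a-1}^b \phi(x)\,dx \leq \frac{G(a-1)-G(b)}{1+(p-q)\,b^{q-1}},
\end{align*}
which is precisely the stated estimate. The ``in particular'' clause then follows by dropping the nonnegative subtracted term and using the mean value theorem on $x\mapsto x^{1-q}$ to bound $a^{1-q}-(a-1)^{1-q}\leq(1-q)(a-1)^{-q}$ uniformly, giving the $\Oc(a^{q-p})$ scaling.

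For part $iii)$ I would again begin with a tighter variant of $i)$, namely $\prod_{\tau=\ii+1}^b(1-\tau^{-q}) \leq \exp(((\ii+1)^{1-q}-(b+1)^{1-q})/(1-q))$, derived from $\sum_{\tau=\ii+1}^b\tau^{-q}\geq\int_{\ii+1}^{b+1}x^{-q}\,dx$. Pulling out the $\ii$-independent factor $\exp(-(b+1)^{1-q}/(1-q))$ reduces the task to bounding $\sum_{\ii=a}^b \ii^{-p}\exp((\ii+1)^{1-q}/(1-q))$. The concavity estimate $(\ii+1)^{1-q}-\ii^{1-q}\leq(1-q)\,\ii^{-q}$ replaces this, up to a bounded multiplicative factor, by $\sum_{\ii=a}^b f(\ii)$ where $f(x)\coloneqq x^{-p}\exp(x^{1-q}/(1-q))$; the first assumption $a\geq p^{1/(1-q)}$ makes $f$ increasing on $[a,\infty)$, so the sum is dominated by an integral. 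One then evaluates this integral via the antiderivative $F(x)\coloneqq x^{q-p}\exp(x^{1-q}/(1-q))$, whose derivative equals $f(x)\,[1-(p-q)\,x^{q-1}]$. The second assumption $a\geq((p-q)/2)^{1/(1-q)}$ is exactly what is needed to keep this bracket bounded below by a positive universal constant on the integration range, so $\int f \lesssim F(b+1)$. Cancelling the pulled-out exponential with the one inside $F(b+1)$ produces the desired $(b+1)^{q-p}$-scaling, and tracking the numerical constants yields $2\exp(1/(1-q))$.

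The main obstacle is part $iii)$. Unlike in $ii)$, the summand is not globally monotone, the product's range sits to the right of $\ii$, and the bracket $[1-(p-q)x^{q-1}]$ appearing in the derivative of the antiderivative can change sign. Both hypotheses on $a$ must be used simultaneously — one to secure monotonicity of the summand and validate the sum-to-integral step, the other to keep the bracket positive — and the shift from $\ii^{1-q}$ to $(\ii+1)^{1-q}$ must be tracked throughout to recover the precise multiplicative constant $2\exp(1/(1-q))$.
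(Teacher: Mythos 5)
Your proof is correct and follows essentially the same route as the paper's: $1-x\le e^{-x}$ plus a sum--integral comparison for part $i)$, and the same antiderivatives $x^{q-p}\exp\left(\pm x^{1-q}/(1-q)\right)$ for parts $ii)$ and $iii)$, with your direct lower bound on the factor $1+(p-q)x^{q-1}$ replacing the paper's integrate-by-parts-and-rearrange step, and your concavity estimate for the shift from $t$ to $t+1$ replacing the paper's splitting of the exponential sum at $\tau=1$. These are bookkeeping variants of the identical argument and both recover the stated constants (your route in $iii)$ in fact yields $2e^{a^{-q}}\le 2e^{1/(1-q)}$), so there is nothing to add.
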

	\begin{proof}
		\newcommand{\omq}{{1-q}}
		\newcommand{\qmp}{{q-p}}
		\newcommand{\g}[1]{\exp \pare{-\frac {{#1}^\omq} \omq}}
		\newcommand{\gtt}{\g \ii}
		\newcommand{\gtwo}[1]{\exp \pare{\frac {{#1}^\omq} \omq}}
		\newcommand{\gtwot}{\gtwo \ii}
		$i)$ The first claim follows from the calculation
		\begin{align}\label{eq:technical.upper_bound_prod_beta}
			\begin{split}
				\prod_{\ii = a}^b \pare{1-t^\mq}
				&\leq \exp \pare{- \sum_{\tau = a}^b \ii^\mq} \leq \exp \pare{- \int_a^{b+1} t^\mq dt} 
				= \exp\pare{\frac 1 \omq \pare{a^{1-q} - \pare{b+1}^\omq}},
			\end{split}
		\end{align}
		where we used $1-x \leq e^{-x}$ in the first, and the monotonicity of $t^\mq$ in the second inequality. Weakening the inequality by replacing $\pare{b+1}$ with $b$ finishes the proof.\\
		$ii$) For the second inequality we use $i$) to derive
		\begin{align*}
			\sum_{\ii = a}^\li \ii^\mpp \prod_{\tau = a}^\ii \pare{1-\ii^\mq}
			&\leq \exp\pare{\frac {a^\omq} \omq} \sum_{\ii = a}^\li \ii^\mpp \exp \pare{- \frac{\ii^\omq}\omq}.
		\end{align*}
		Using the monotonicity of $\ii^\mpp \exp \pare{-\ii^\omq}$ we obtain
		\begin{align*}
			\sum_{\ii = a}^\li \ii^\mpp \exp \pare{- \frac{\ii^\omq}\omq}
			&\leq \int_{a-1}^\li \ii^\mpp \exp\pare{- \frac{\ii^\omq} \omq} d\ii
			= \int_{a-1}^\li \ii^\qmp \ii^\mq \exp \pare{- \frac{\ii^\omq} \omq} d\ii.
		\end{align*}
		Partial integration now yields
		\begin{align*}
			&\int_{a-1}^\li \ii^\qmp \ii^\mq \gtt d\ii\\
			=&\ \left[ - \ii^\qmp \gtt \right]_{\ii = a-1}^{\ii = \li}
			- \pare{p-q}\int_{a-1}^\li \ii^{\qmp - 1}\gtt dt\\
			=&\ \pare{a-1}^\qmp \g {\pare{a-1}} - \li^\qmp \g \li
			 + \pare{\qmp} \int_{a-1}^\li \ii^{\qmp - 1}\gtt dt.
		\end{align*}
		Finally, we use that $\ii^{\qmp - 1}\gtt$ is monotonically decreasing and $p \geq q$ to derive
		\begin{align*}
			\pare{\qmp} \int_{a-1}^\li \ii^{\qmp - 1}\gtt dt
			&\leq \pare \qmp \li^{q-1} \int_{a-1}^\li \ii^\mpp \gtt dt.
		\end{align*}
		Noting that this is the integral we started with and rearranging yields the claim.\\
		$iii$) The proof of the last claim uses the same arguments as in \citep{Ilyas23}. First we use $i$) to obtain
		\begin{align*}
			\sum_{\ii = a}^b \ii^{-p} \prod_{\tau = \ii + 1}^b \pare{1-\tau^{-q}}
			&\leq \sum_{\ii = a}^b \ii^{-p} \exp\pare{- \sum_{\tau = \ii + 1}^b \tau ^\mq}
			= \exp \pare{-\sum_{\tau = 1}^b \tau^\mq} \sum_{\ii = a}^b \ii^{-p} \exp\pare{\sum_{\tau = 1}^\ii \tau ^\mq}.
		\end{align*}
		Using the monotonicity of $\tau^\mq$, we get
		\begin{align*}
			\exp \pare{-\sum_{\tau = 1}^b \tau^\mq} 
			\leq \exp \pare{- \int_1^{b+1} \tau^\mq d\tau}
			= \exp\pare{\frac{1 - \pare{b+1}^\omq} \omq}
		\end{align*}
		and
		\begin{align*}
			\exp\pare{\sum_{\tau = 1}^\ii \tau ^\mq}
			\leq \exp \pare{\int_0^\ii \tau^\mq d\tau}
			= \exp\pare{\frac{\ii^\omq} \omq}.
		\end{align*}
		We now proceed to bound 
		\begin{align*}
			\sum_{\ii = a}^b \ii^\mpp \exp\pare{\sum_{\tau = 1}^\ii \tau ^\mq}
			\leq \sum_{\ii = a}^b \ii^\mpp \exp\pare{\frac{\ii^\omq} \omq}.
		\end{align*}
		Therefore, note that $f\pare\ii \coloneqq \ii^\mpp \exp\pare{\frac{\ii^\omq} \omq}$ is monotonically increasing for $\ii \geq a$ by our assumption on $a$. This implies
		\begin{align*}
			\sum_{\ii = a}^b \ii^\mpp \exp\pare{\frac{\ii^\omq} \omq}
			&\leq \int_a^{b+1} \ii^\mpp \exp\pare{\frac{\ii^\omq} \omq} dt \eqqcolon I.
		\end{align*}
		Integration by party now yields
		\begin{align*}
			I
			&= \int_a^{b+1} \ii^\qmp  \ii^\mq \exp\pare{\frac{\ii^\omq} \omq} dt\\
			&= \left[ \ii^\qmp \gtwot \right]_{\ii = a}^{\ii = b+1}
			- \pare{\qmp}\int_a^{b+1} \ii^{\qmp - 1} \gtwot dt\\
			&\leq \pare{b+1}^\qmp \gtwo{\pare{b+1}} - a^\qmp \gtwo a + \pare{p-q}a^{q-1}I,
		\end{align*}
		where we used $p \geq q$ in the last inequality. By our second assumption on $a$ we now get that $\pare{p-q}a^{q-1} \leq \nicefrac 1 2$ and hence
		\begin{align*}
			I \leq 2\pare{b+1}^\qmp \gtwo{\pare{b+1}} - 2a^\qmp \gtwo a.
		\end{align*}
		Putting together the pieces yields
		\begin{align*}
			\sum_{\ii = a}^b \ii^{-p} \prod_{\tau = \ii + 1}^b \pare{1-\tau^{-q}}
			&\leq 2\exp\pare{\frac{1 - \pare{b+1}^\omq} \omq} \pare{\pare{b+1}^\qmp \gtwo{\pare{b+1}} - a^\qmp \gtwo a}\\
			&= 2\exp\pare{\frac 1 \omq}\pare{b+1}^\qmp - a^\qmp \exp\pare{\frac{1 - \pare{b+1}^\omq +a^\omq} \omq},
		\end{align*}
		thus proving the last claim.
	\end{proof}
}

The following lemma applies the specific values of $p$ and $q$ to \Cref{lem:technical.general}.

\begin{lemma}[Technical Lemma]\label{lem:technical.explicit}
	Let $\eta > 0$ and for $\ii \in \Ngeq$ we set
	\begin{align*}
		\betat &\coloneqq 1 - \ii^{-\nicefrac 1 2},\\
		\etat &\coloneqq \eta \ii^{-\nicefrac 3 4}.
	\end{align*}
	Then we have
	\begin{enumerate}[label*=\alph*)]
		\item For all $\li \in \Ngeq$ the following inequalities hold: 
		\begin{enumerate}
			\item[$i$)] 
				$\isum \etat \prod_{\tau = 2}^t \iterationbeta \tau \leq \frac 7 2 \eta$;
			\item[$ii$)] 
				$\isum \etat \sqrt{\sum_{\tau = 1}^\ii  \iterationa \tau ^2 \prod_{\kappa = \tau + 1}^t \iterationbeta \kappa^2}
				\leq \eta \pare{ \frac 7 2 + \sqrt{2e^2} \log \pare \li}.$
		\end{enumerate}
		\item Let $\li \in \Ngeq$ and define $
			\Ct \coloneqq 1 + 2 \frac{e^{\Lo \etat} - 1}{\Lo \etat} - 4 \frac{e^{\Lo \etat} - 1 - \Lo \etat}{\pare{\Lo \etat}^2}$.
		Then the following inequalities hold: 
		\begin{enumerate}
			\item[$i$)]
				$\isum \etat^2 \Ct 
				\leq 6\eta^2\frac {e^{\Lo \eta} - 1}{\Lo \eta}$;
			\item[$ii$)]
				$\isum \etat \sum_{\tau = 2}^\ii \iterationeta \tau \iterationC \tau \prod_{\kappa=\tau}^\ii \iterationbeta \kappa
				\leq 7 \ssi^2 \pare{3 \frac{e^\Aa - 1} \Aa + \log \pare \li}$.
		\end{enumerate}
		\newcommand{\sz}{{s_0}}
		\item For $t \in \Ngeq$ we define 
			$\Dt \coloneqq 2 \frac{e^{\Lo \etat} - 1 - \Lo \etat}{\pare{\Lo \etat}^2}$ and
			$\dt \coloneqq 4\ssi \pare{\ii - 1}^\of - 3 \ssi.$
		Then for all $b \in \N_{\geq 2}$, the following inequalities hold: 
		\begin{enumerate}
			\item[$i$)]
				$\sum_{\ii = 2}^b \Lo \etat \Dt \ii^{-\of} \dt e^{\Lo \dt}
				\leq \frac 1 2 \ssi^2 \Lo e^{2\Aa} + 4 \ssi e^{-\frac 5 2 \Aa} \pare{e^{4 \Aa b^\of} - e^{4 \Aa}}$;
			\item[$ii$)]
				$\sum_{\ii = 1}^b \Lo \etat \Dt \ii^{-\of} e^{\Lo \dt}
				\leq \frac 3 2 \Aa e^{\frac 5 3 \Aa} + e^{-\frac 5 2\Aa} \pare{e^{4 \Aa b^\of} - e^{4\Aa}}$;
			\item[$iii$)] If additionally $\Aa \geq \frac 1 2$, we have
				$\sum_{\ii = 1}^b \Lo \etat \Dt \ii^{-\of} e^{\Lo \dt}
				\leq \frac 3 2 \Aa e^{\frac 5 3 \Aa} + e^{-\frac 5 2\Aa}\pare{2 b^{-\of} e^{4\Aa b^\of} - e^{4 \Aa}}.$
		\end{enumerate}
	\end{enumerate}
\end{lemma}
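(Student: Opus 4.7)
The plan is to specialize the general bounds of \Cref{lem:technical.general} to the concrete schedules $\betat = 1-\ii^{-\nicefrac 1 2}$ (so $q = \nicefrac 1 2$) and $\etat = \eta \ii^{-\nicefrac 3 4}$ (so $p = \nicefrac 3 4$), and for the terms involving $\Ct, \Dt, \dt$ to replace them by clean closed-form bounds in terms of $\Lo \etat$. Each of the nine sub-inequalities decomposes into three tasks: identify the right $(p, q, a)$ for \Cref{lem:technical.general}, peel off any low-index terms that do not fit the product-from-$\tau = 2$ structure, and absorb the Taylor-type remainders hidden in $\Ct$ and $\Dt$ into factors of $(e^c - 1)/c$ with $c = \Lo \etat$.

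For part (a), inequality $(i)$ is obtained by extracting the $\ii = 1$ term (which contributes $\eta_1 = \eta$) and applying \Cref{lem:technical.general}(ii) with $p = \nicefrac 3 4, q = \nicefrac 1 2, a = 2$; the constants collapse to $\nicefrac 7 2 \eta$. Inequality $(ii)$ requires two steps: first bound $\beta_\kappa^2 \leq \beta_\kappa$ so that the inner sum $\sum_{\tau = 1}^\ii \iterationa\tau^2 \prod_{\kappa = \tau + 1}^\ii \beta_\kappa$ fits \Cref{lem:technical.general}(iii) with $p = \nicefrac 3 2, q = \nicefrac 1 2$, then take the square root and apply part (ii) or (iii) to the outer sum. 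The logarithmic factor $\log T$ appears from the borderline summation after the square root forces a $t^{-1}$ integrand.

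For part (b), the essential ingredient is the pointwise bound $\Ct \leq K (e^{\Lo \etat} - 1)/(\Lo \etat)$ for a small constant $K$, which follows directly from expanding the definition of $\Ct$ and collecting like terms. Combined with the monotonicity of $c \mapsto (e^c-1)/c$ and the uniform upper bound $\Lo \etat \leq \Aa$, inequality $(i)$ reduces to a single application of \Cref{lem:technical.general}(ii) after the substitution $\etat^2 \cdot \Ct \lesssim \etat (e^{\Lo \eta}-1)/\Lo$. Inequality $(ii)$ is the hardest in this block: it is a nested sum that I would attack by swapping the order of summation, applying \Cref{lem:technical.general}(iii) to the inner sum in $\ii$ (producing a factor decaying like $\tau^{-\nicefrac 1 4}$), and then summing over $\tau$ using (ii), which again introduces a $\log T$ via the exponent on the boundary.

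Part (c) is the most delicate. The key difficulty is that $\dt \approx 4\eta \ii^{\nicefrac 1 4}$, so $e^{\Lo \dt}$ grows like $\exp(4\Aa \ii^{\nicefrac 1 4})$, and this dominates all polynomial decay from $\etat, \Dt, \ii^{-\nicefrac 1 4}$; the target bounds correspondingly split into a ``polynomial in $\Aa$'' piece plus an ``exponential in $\Aa b^{\nicefrac 1 4}$'' piece. My plan is to verify the telescoping identity $e^{\Lo \iterationd{\ii + 1}} - e^{\Lo \dt} \approx \Lo \eta \ii^{-\nicefrac 3 4} e^{\Lo \dt}$, which matches the $\etat D_t$ prefactor up to a harmless multiplicative constant, and then sum by parts. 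For small $\ii$, where the telescoping approximation is crude, I would bound $\dt$ by a constant to extract the $\Aa e^{\nicefrac 5 3 \Aa}$ term; for large $\ii$, the telescoping sum collapses to $e^{4\Aa b^{\nicefrac 1 4}} - e^{4 \Aa}$ multiplied by the pre-factor $e^{-\nicefrac 5 2 \Aa}$. The extra $b^{-\nicefrac 1 4}$ improvement in $(iii)$ under $\Aa \geq \nicefrac 1 2$ comes from one additional summation-by-parts step. The principal obstacle throughout will be bookkeeping: matching the explicit numerical constants $4, 3, \nicefrac 5 2, \nicefrac 5 3$ in the statement requires choosing the split index, the constant $K$, and the telescoping approximation error very tightly.
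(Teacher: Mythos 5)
Your plan follows the same skeleton as the paper's proof: specialize \Cref{lem:technical.general} with $q=\nicefrac 1 2$, peel off the low-index terms that violate the lemma's $a\ge 2$ requirement, reduce $\Ct$ and $\Dt$ to expressions of the form $(e^c-1)/c$ with $c=\Lo\etat$, and treat part (c) by recognizing $\Lo\etat e^{\Lo\dt}$ as (up to constants) the increment of $e^{4\Aa t^{\nicefrac 1 4}}$. Your discrete telescoping in (c) is precisely the discrete analogue of the paper's exact integral--antiderivative comparison, the separate treatment of small $t$ matches the paper's peeling of the $t=1,2$ terms, and the additional summation by parts under $\Aa\ge\nicefrac 1 2$ for (c)(iii) is also what the paper does. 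One sub-argument is genuinely different and simpler: for (b)(i) the paper expands $e^{\Aat}-1$ as a power series and integrates term by term, whereas your route --- $\Ct\le 2\frac{e^{\Lo\etat}-1}{\Lo\etat}\le 2\frac{e^{\Aa}-1}{\Aa}$ by monotonicity of $c\mapsto\frac{e^c-1}{c}$, followed by $\isum\etat^2\le\eta^2\left(1+\int_1^\infty s^{-\nicefrac 3 2}\,ds\right)=3\eta^2$ --- already yields the stated constant $6$ with less work.

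Two slips need fixing. In (a)(ii) you assign $p=\nicefrac 3 2$ to the inner sum, but $\iterationa\tau^2=\tau^{-1}$, so the correct exponent is $p=1$. This is not cosmetic: with $p=\nicefrac 3 2$ the bound from \Cref{lem:technical.general}(iii) would decay like $(t+1)^{-1}$, its square root like $(t+1)^{-\nicefrac 1 2}$, and the outer summand like $t^{-\nicefrac 5 4}$, so no logarithm would appear; it is exactly $p=1$ that produces the $t^{-1}$ integrand and the $\log T$ you correctly predict. (Also split off the $\tau=1$ term before invoking the general lemma, as the paper does.) In (b)(ii), after swapping the order of summation the inner sum is $\sum_{t=\tau}^T\etat\prod_{\kappa=\tau}^{t}\iterationbeta\kappa$, whose product is anchored at the \emph{fixed lower} index $\tau$ and runs up to the summation variable; this is the structure of \Cref{lem:technical.general}(ii), not (iii). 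The conclusion is unchanged --- it still yields the $(\tau-1)^{-\nicefrac 1 4}$ factor, hence $\eta\tau^{-1}\iterationC\tau$ and the logarithm --- but the cited tool is the wrong one. With these corrections your outline reproduces the paper's argument.
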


\begin{proof}
	Let $\li \in \Ngeq$ and denote $p \coloneqq \nicefrac 3 4, q \coloneqq \nicefrac 1 2$ for simplicity.
	\begin{itemize}
		\item[$a$) $i$)]
		The inequality follows from
		\begin{align*}
			\isum \etat \prod_{\tau = 2}^t \iterationbeta \tau
			= \eta + \sum_{\ii = 2}^\li \etat \prod_{\tau = 2}^\ii \iterationbeta \tau
			\leq \eta + \eta\exp\pare{2\sqrt 2 - 2}
			\leq \frac 7 2 \eta,
		\end{align*}
		where we used \Cref{lem:technical.general} $ii$) in the first inequality.
		
		\item[$a$) $ii$)]
		We start by regrouping
		\begin{align*}
			\isum \etat \sqrt{\sum_{\tau = 1}^\ii  \iterationa \tau ^2 \prod_{\kappa = \tau + 1}^t \iterationbeta \kappa^2}
			&<  \isum \etat \pare{\prod_{\kappa=2}^\ii \pare{1-\kappa^{-q}} + \sqrt{\sum_{\tau = 2}^\ii \tau^{-2q} \prod_{\kappa = \tau + 1}^t \pare{1- \kappa^{-q}}}}.
		\end{align*}
		Applying \Cref{lem:technical.general} $i$), $iii)$ and $a$) $i$) now yields the statement:
		\begin{align*}
			\isum \etat \sqrt{\sum_{\tau = 1}^\ii  \iterationa \tau ^2 \prod_{\kappa = \tau + 1}^t \iterationbeta \kappa^2}\ \ \
			\stack{i), \ref{lem:technical.general}}{\leq} \frac 7 2 \eta + \sum_{\ii = 2}^\li \etat \sqrt{2e^2 \pare{\ii + 1}^{-q}}
			\leq \eta \pare{ \frac 7 2 + \sqrt{2e^2} \log \pare \li}.
		\end{align*}
		Note that the first inequality is rather loose, a more precise analysis might yield a better result. The above result does however suffice for our use-case.
		
		\item[$b$) $i$)] 
		First note that
		\begin{align}\label{lzlo.infnsgd.eq:explicit_technical_bounds_Ct_bound}
			\Ct \leq 2 \frac{e^\Aat - 1} \Aat
		\end{align}
		and hence
		\begin{align}\label{lzlo.infnsgd.eq:explicit_technical_bounds_bi_1}
			\isum \etat^2 \Ct
			\leq \frac 2 {\Lo} \isum \etat \pare{e^\Aat - 1}.
		\end{align}
		Now we calculate
		\begin{align}\label{lzlo.infnsgd.eq:explicit_technical_bounds_bi_2}
			\begin{split}
				\isum \etat \pare{e^\Aat - 1}
				&\leq \eta^2 \frac{e^\Aa - 1} \eta + \int_{1}^\li \etat \pare{e^\Aat - 1} dt
				= \eta^2 \pare{\frac{e^\Aa - 1} \eta + \frac 1 \eta \int_1^\li \ii^{-p} \pare{e^\Aat - 1}}
			\end{split}
		\end{align}
		and further
		\begin{align}\label{lzlo.infnsgd.eq:explicit_technical_bounds_bi_3}
			\int_1^\li \ii^{-p} \pare{e^\Aat - 1}
			&= \int_1^\li t^{-p} \sum_{k=1}^\infty \frac{\pare{\Aa t^{-p}}^k}{k!}dt
			= \sum_{k=1}^\infty \int_1^\li \frac{\pare{\Aa}^k}{k!} t^{-p(k+1)}dt,
		\end{align}
		where we used that the exponential series converges locally uniformly in the second equality. Finally we calculate for $k \geq 2$
		\begin{align}\label{lzlo.infnsgd.eq:explicit_technical_bounds_bi_4}
			\int_1^\li \ii^{-p \pare{k+1}} dt
			&= \frac 4 {-3k + 1}\pare{\li^{-p\pare{k+1} + 1} - 1} \leq \frac 4 {3k - 1} \leq \frac 4 {k + 1}.
		\end{align}
		Combining \eqref{lzlo.infnsgd.eq:explicit_technical_bounds_bi_3} and \eqref{lzlo.infnsgd.eq:explicit_technical_bounds_bi_4} now yields
		\begin{align}\label{lzlo.infnsgd.eq:explicit_technical_bounds_bi_5}
			\int_1^\li \ii^{-p} \pare{e^\Aat - 1}
			&\leq 4 \sum_{k=1}^\infty \frac{\pare{\Aa}^k}{\pare{k+1}!}
			= 4 \frac{e^\Aa - 1 - \Aa}{\Aa}
		\end{align}
		and hence
		\begin{align*}
			\isum \etat^2 \Ct
			\stackAlign{\eqref{lzlo.infnsgd.eq:explicit_technical_bounds_bi_1}}{\leq}
			\frac 2 {\Lo} \isum \etat \pare{e^\Aat - 1}\\
			\stackAlign{\eqref{lzlo.infnsgd.eq:explicit_technical_bounds_bi_2}}{\leq}
			\eta^2 \pare{2 \frac{e^\Aa - 1}{\Aa} + \frac 2 \Aa \int_1^\li \ii^{-p} \pare{e^\Aat - 1} dt}\\
			\stackAlign{\eqref{lzlo.infnsgd.eq:explicit_technical_bounds_bi_5}}{\leq}
			\eta^2 \pare{2 \frac{e^\Aa - 1}{\Aa} + 8 \frac{e^\Aa - 1 - \Aa}{\pare \Aa ^2}}.
		\end{align*}
		The claim now follows by noting that for all $x \geq 0$ the inequality $2\frac{e^x - 1 - x}{x^2} \leq \frac{e^x - 1}{x}$ is satisfied.
		
		\item[$b$) $ii$)] Firstly, $a$) $i$) yields
		\begin{align}\label{lzlo.infnsgd.eq:explicit_technical_bounds_bii_1}
			\isum \sum_{\tau = 2}^\ii \etat \iterationeta \tau \pare{\prod_{\kappa = \tau}^\ii \iterationbeta \kappa} \iterationC \tau
			&= \sum_{\tau = 2}^\li \iterationeta \tau \iterationC \tau \sum_{\ii = \tau}^\li \etat \prod_{\kappa=\tau}^\ii \iterationbeta \kappa
			\stackrel{a) i)}{\leq} \frac 7 2 \eta \sum_{\tau = 2}^\li \iterationeta \tau \iterationC \tau \pare{\tau - 1}^{q - p}
			\leq \frac 7 2 \eta^2 \sum_{\tau = 1}^\li \tau^{-1} \iterationC \tau.
		\end{align}
		To upper bound $\isum \ii^{-1} \Ct$ we first use \eqref{lzlo.infnsgd.eq:explicit_technical_bounds_Ct_bound} to get
		\begin{align*}
			\isum \ii^{-1} \Ct
			\stackrel{\eqref{lzlo.infnsgd.eq:explicit_technical_bounds_Ct_bound}}{\leq}
			2 \isum \ii^{-1} \frac{e^\Aat - 1}{\Aat}
			\leq 2 \frac{e^\Aa - 1} \Aa + 2\sum_{\ii = 2}^\li \ii^{-1} \frac{e^\Aat - 1}{\Aat}.
		\end{align*}
		Next we focus on bounding $\sum_{\ii = 2}^\li \ii^{-1} \frac{e^\Aat - 1}{\Aat}$. We therefore again use the locally uniform convergence of the exponential series to get
		\begin{align*}
			\sum_{\ii = 2}^\li \ii^{-1} \frac{e^\Aat - 1}{\Aat}
			&\leq \int_1^\li \ii^{-1} \sum_{k=1}^\infty \frac{\pare{\Aat}^{k-1}}{k!}dt\\
			&= \frac 1 {\Aa} \sum_{k=1}^\infty \frac{\pare \Aa^k}{k!} \int_1^\li \ii^{-pk - \nicefrac 1 4}dt\\
			&= \log \pare \li + \frac 1 {\pare \Aa^2} \sum_{k=2}^\infty \frac {\pare \Aa^{k+1}} {k!} \frac{1 - \li^{-p\pare{k-1}}}{p \pare{k-1}}\\
			&\leq \log \pare \li + \frac 4 {\pare \Aa^2} \sum_{k=2}^\infty \frac{\pare \Aa^{k+1}}{ \pare{k+1}!},
		\end{align*}
		where we used that $3(k-1) \geq k + 1$ for $k \geq 2$. Putting the above together and using that $2\frac{e^x - 1 - x - x^2/2}{x^2} \leq \frac{e^x - 1}{x}$ for $x \geq 0$ gives
		\begin{align*}
			\isum \sum_{\tau = 2}^\ii \etat \iterationeta \tau \pare{\prod_{\kappa = \tau}^\ii \iterationbeta \kappa} \iterationC \tau
			\leq \frac 7 2 \eta^2 \pare{2 \frac{e^\Aa - 1} \Aa + 2 \log \pare \li + 4 \frac{e^\Aa - 1} \Aa}
			= 7 \ssi^2 \pare{\log \pare \li + 3 \frac{e^\Aa - 1} \Aa}
		\end{align*}
		and hence proves the claim.
		\item[$c$) $i$)] We start off by calculating
		\begin{align*}
			\wsum 2 \Lo \etat \Dt \ii^{-\of} \dt e^{\Lo \dt}
			&\leq \Lo \iterationeta 2 \iterationD 2 2^{-\of}\ssi e^\Aa
			+ \wsum 3 \Lo \etat \Dt \ii^{-\of} \dt e^{\Lo \dt}\\
			&\leq \frac 1 2 \ssi^2 \Lo e^{2 \Aa}
			+ 4 \ssi \wsum 3 \Lo \etat \Dt e^{\Lo \dt}
		\end{align*}
		and further \newcommand{\wint}{\int_2^{b+1}}
		\begin{align}\label{lzlo.infnsgd.lem:explicit_technical_bounds_ci_1}
			\begin{split}
				\wsum 3 \Lo \etat \Dt e^{\Lo \dt}
				&\leq \wsum 3 \Lo \etat \exp \pare{\Lo \pare{4\ssi \pare{\ii - 1}^\of - 3 \ssi + \etat}}\\
				&\leq e^{-\frac 5 2\Aa} \wsum 3 \Lo \iterationeta {\ii - 1} e^{4 \Aa \pare{\ii-1}^\of}\\
				&\leq e^{-\frac 5 2\Aa} \wint \Aa \pare{\ii - 1}^{-p} e^{4 \Aa \pare{\ii - 1}^{1-p}} dt\\
				&= e^{-\frac 5 2\Aa} \pare{e^{4 \Aa b^\of} - e^{4\Aa}}.
			\end{split}
		\end{align}
		Here we used that $g \pare \ii \coloneqq \Lo \etatm e^{4 \ssi \Lo \pare{\ii - 1}^\of}$ is non-negative and monotonically decreasing before turning monotonically increasing in the third inequality. Noting that \eqref{lzlo.infnsgd.lem:explicit_technical_bounds_ci_1} also holds for $b = 2$ yields the claim.
		\item[\phantom{$c$)} $ii$)] We have
		\begin{align}\label{lzlo.infnsgd.eq:explicit_technical_bounds_cii_1}
			\begin{split}
				\sum_{\ii = 1}^b \Lo \etat \Dt \ii^{-\of} e^{\Lo \dt}
				&= \Aa\iterationD \fin + \frac 1 2 \Aa \iterationD 2 e^\Aa + \wsum 3 \Lo \etat \Dt \ii^{-\of} e^{\Lo \dt}\\
				&\leq \Aa e^\Aa + \frac 1 2 \Aa e^{\pare{1+2^{-\frac 3 4}}\Aa} + \wsum 3 \Aat \Dt e^{\Lo \dt}
			\end{split}
		\end{align}
		and using \eqref{lzlo.infnsgd.lem:explicit_technical_bounds_ci_1} yields
		\begin{align*}
			\sum_{\ii = 1}^b \Lo \etat \Dt \ii^{-\of} e^{\Lo \dt}
			&\leq \frac 3 2 \Aa e^{\frac 5 3 \Aa} + e^{-\frac 5 2\Aa} \pare{e^{4 \Aa b^\of} - e^{4\Aa}}.
		\end{align*}
		\item[\phantom{$c$)} $iii$)] We first again calculate
		\begin{align*}
			\wsum 3 \Lo \etat \Dt \ii^{-\of} e^{\Lo \dt}
			&\leq e^{-\frac 5 2 \Aa} \wint \ii^{-\of} \Lo \ssi \pare{\ii-1}^{-\frac 3 4} e^{4\Aa \pare{\ii - 1}^\of}dt
		\end{align*}
		before, similar to the proof of \Cref{lem:technical.general} $iii$), using partial integration to derive
		\begin{align*}
			I 
			&\coloneqq \wint \ii^{-\of} \Lo \ssi \pare{\ii-1}^{-\frac 3 4} e^{4\Aa \pare{\ii - 1}^\of}dt\\
			&= \left[ \ii^{-\of} e^{4\Aa \pare{b-1}^\of} \right]_{\ii = 2}^{\ii = b + 1}
			+ \of \wint \ii^{- \frac 5 4} e^{4\Aa\pare{\ii - 1}^\of}dt\\
			&\leq b^{-\of} e^{4\Aa b^\of} - \frac 1 2 e^{4 \Aa} + \frac 1 {2^\of 4 \Aa} \wint \Aa \pare{\ii-1}^{-1} e^{4 \Aa \pare{\ii-1}^\of}\\
			&\leq b^{-\of} e^{4\Aa b^\of} - \frac 1 2 e^{4 \Aa} + \frac 1 {4 \Aa} I.
		\end{align*}
		By our assumption we have $\frac 1 {4 \Aa} \leq \frac 1 2 $ and hence
		\begin{align*}
			I \leq 2 b^{-\of} e^{4\Aa b^\of} - e^{4 \Aa}.
		\end{align*}
		Finally \eqref{lzlo.infnsgd.eq:explicit_technical_bounds_cii_1} yields the claim.
	\end{itemize}
	
\end{proof}

\newpage
\section{Missing Proofs}
\label{sec:app.missing_proofs}
This section contains the proofs for \Cref{sec:upper_bounds} and \Cref{sec:lower_bounds}.

\subsection{Proofs for Parameter-Agnostic Upper Bounds}\label{sec:app.missing_proofs.upper_bounds}
\subsubsection{Stochastic Setting}
\label{sec:app.missing_proofs.upper_bounds.stochastic_setting}

We start with the proof of \Cref{thm:main_result}, which has the same structure as in the $L$-smooth setting \citep{nsgdm}: We first derive a Descent Lemma, second bound the momentum deviation $\norm{\mt - \nFxk}$ and third combine these two to show the result. The last step is however more intricate, as large stepsizes in the beginning can lead to an exponential increase in the gradient norm. The main intuitions behind the third step are the following:

Due to potentially too large stepsizes, we cannot use the descent lemma to control the expected gradient norm in the beginning. Only after reaching a threshold $t_0 \propto \pare \Aa^4$ the gradient norms can be controlled in this fashion. Before this threshold, in the \emph{adaption phase}, we instead use \LzLo-smoothness to control the gradient norms based on $\norm{\nF \pare{\xfin}}$.
After this threshold, in the \emph{convergence phase}, \Cref{lem:technical.explicit} essentially establishes that the diminishing step-size rule $\etat = \ii^{-p}$ exhibits the same asymptotically behaviour as if the stepsizes were chosen constantly as $\etat \equiv \li^{-p}$, where $\li$ denotes the iteration horizon. This aligns with the behaviour of \nsgdm{} in the $L$-smooth setting \citep{Junchi2022}. In particular, this implies that $p = \nicefrac 3 4$ is the only possible choice to achieve the optimal complexity \citep{nsgdm, ImprovedAnalysis}.

Unless stated otherwise, the notations $\set{\iterationXi{1}, \iterationXi{2}, \dots}, \set{\iterationG{1}, \iterationG{2}, \dots}, \set{\iterationm{1}, \iterationm{2}, \dots}$ and $\set{\iterationX{1}, \iterationX{2}, \dots}$ correspond to the iterations generated by \nsgdm~throughout this section. We denote the natural filtration of $\ls{\iterationxi \fin}{\xit}$ with respect to the underlying probability space by $\Fct \coloneqq \sigma\pare{\ls{\iterationxi \fin, \iterationxi 2}{\xit}}$.

\begin{lemma}[Descent Lemma]\label{lem:analysis.descent_lemma}
	Assume \assumlzlo~and let $t \in \N_{\geq 2}$. Then
	\begin{align*}
		F \atxkp - F \atxk 
		\leq - \etat \norm{\nFxk} + 2\etat \norm{\nFxk - \mt} + \frac {\etat^2} 2 \pare{\Lz \Ct + \Lo \Dt \norm \nFxk},
	\end{align*}
	where $\Ct \coloneqq \Bz \pare{\Lo \etat}$ and $\Dt \coloneqq \Bo \pare{\Lo \etat}$, where $\Bz, \Bo$ are as defined in \Cref{lem:app.lzlo_property}.
	If we further assume \assumlb~we also get
	\begin{align*}
		\isum \pare{\etat - \frac{\Lo \etat^2 \Dt} 2} \norm \nFxk
		&\leq \Dz + \frac \Lz 2 \isum \etat^2 \Ct  + 2 \isum \etat \norm{\nFxk - \mt},
	\end{align*}
	where $\Dz \coloneqq F \pare{\xfin} - F^*$.
\end{lemma}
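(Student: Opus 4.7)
My plan is to obtain the single-step inequality by plugging the \nsgdm{} update into the $(L_0,L_1)$-smooth quadratic-like upper bound of \Cref{lem:app.lzlo_property}, control the linear term with the standard identity for normalized updates, and then rearrange and telescope to obtain the summed statement.

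First, I would use that the update $\xkp = \xk - \etat \mt/\norm{\mt}$ is unit-normalized in direction, so $\norm{\xkp - \xk} = \etat$. Applying \Cref{lem:app.lzlo_property} with $x = \xk$, $y = \xkp$ produces a descent-type inequality whose quadratic remainder is exactly $\tfrac{\etat^2}{2}\bigl(\Lz \Ct + \Lo \Dt \norm{\nFxk}\bigr)$, by the definitions $\Ct = \Bz(\Lo \etat)$ and $\Dt = \Bo(\Lo \etat)$. The only remaining term on the right is the linear part $\langle \nFxk, \xkp - \xk\rangle = -\etat \langle \nFxk, \mt/\norm{\mt}\rangle$.

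The key step is the standard normalized-direction identity: write
\[
\norm{\nFxk} - \bigl\langle \nFxk, \tfrac{\mt}{\norm{\mt}}\bigr\rangle = \bigl(\norm{\nFxk} - \norm{\mt}\bigr) + \bigl\langle \mt - \nFxk, \tfrac{\mt}{\norm{\mt}}\bigr\rangle,
\]
bound the first summand by $\norm{\nFxk - \mt}$ via the reverse triangle inequality, and the second by $\norm{\nFxk - \mt}$ via Cauchy--Schwarz (since $\mt/\norm{\mt}$ is a unit vector). Multiplying by $\etat$ gives $-\etat \langle \nFxk, \mt/\norm{\mt}\rangle \leq -\etat \norm{\nFxk} + 2\etat \norm{\nFxk - \mt}$, which combined with the quadratic bound proves the single-step inequality.

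For the summed statement, I would rearrange to isolate $(\etat - \tfrac{\Lo \etat^2 \Dt}{2})\norm{\nFxk}$ on the left, sum over $\ii = 1, \ldots, \li$, telescope the function-value differences to $F(\xfin) - F(\ix{\li + 1}) \leq \Dz$ using \assumlb{}, and collect the remaining $2\sum_\ii \etat \norm{\nFxk - \mt}$ and $\tfrac{\Lz}{2}\sum_\ii \etat^2 \Ct$ terms on the right. The $t \geq 2$ qualifier on the first claim is compatible with the $t = 1$ case of the sum, since the identical derivation uses only the update rule and the smoothness inequality, with no structural assumption on $\mt$ beyond $\mt \neq 0$.

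\emph{Main obstacle.} This is a routine descent-lemma calculation with no real obstacle. The only subtlety worth flagging is the two-term decomposition of $\norm{\nFxk} - \langle \nFxk, \mt/\norm{\mt}\rangle$, which is what introduces the factor $2$ in front of the momentum-error penalty; this is the standard device in the analysis of normalized methods and is what makes a separate bound on $\sum_\ii \etat \norm{\nFxk - \mt}$ the actual crux of the subsequent convergence analysis.
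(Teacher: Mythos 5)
Your proposal is correct and follows essentially the same route as the paper: apply \Cref{lem:app.lzlo_property} with $\norm{\xkp - \xk} = \etat$, split the inner-product term into a $-\etat\norm{\mt}$ (equivalently $-\etat\nsq{\nFxk}$ after the triangle inequality) piece plus a momentum-error piece bounded by Cauchy--Schwarz, which yields the factor $2\etat\norm{\nFxk - \mt}$, and then sum and telescope using lower-boundedness. Your decomposition of $\norm{\nFxk} - \langle \nFxk, \mt/\norm{\mt}\rangle$ is algebraically identical to the paper's rearrangement, so there is nothing to add.
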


\begin{proof}
    The proof follows the arguments by \citet{LzLoNGD}. Using \Cref{lem:app.lzlo_property} we get
	\begin{align*}
		F \atxkp - F \atxk 
		&\leq \nFxk^\top \pare{\xkp - \xk} + \frac {\etat^2} 2 \pare{\Lz \Ct + \Lo \Dt \norm \nFxk}\\
		&= - \frac \etat {\norm \mt} \nF(x)^\top \mt + \frac {\etat^2} 2 \pare{\Lz \Ct + \Lo \Dt \norm \nFxk}\\
		&= - \frac \etat {\norm \mt} \pare{\nFxk - \mt}^\top \mt - \etat \norm \mt + \frac {\etat^2} 2 \pare{\Lz \Ct + \Lo \Dt \norm \nFxk}.
	\end{align*}
	Utilizing Cauchy-Schwarz and $\etat \norm \nFxk \leq \etat \norm{\nFxk - \mt} + \etat \norm \mt$ now yields
	\begin{align*}
		F \atxkp - F \atxk 
		&\leq - \etat \norm \nFxk + 2\norm{\nFxk - \mt} + \frac {\etat^2} 2 \pare{\Lz \Ct + \Lo \Dt \norm \nFxk}
	\end{align*}
	and hence the first claim. For the second statement we sum up to get
	\begin{align*}
		\isum \pare{\etat - \frac{\Lo \etat^2 \Dt} 2} \norm{\nF \pare x}
		\leq \Dz + \frac 1 2 \isum \Lz \etat^2 \Ct + 2 \isum \etat \norm{\nFxk - \mt}.
	\end{align*}
\end{proof}

\begin{lemma}[General Momentum Deviation Bound]\label{lem:analysis.mu_bound}
	Assume \assumlzlo, \assumbounded~and let $\ii \in \N_{\geq 1}$. Suppose $\iterationbeta \fin = 0$. Then we have
	\begin{align*}
		\Expnorm{\mt - \nFxk}
		\leq&\  
		\varsym \sqrt{ \sum_{\tau = 1}^\ii \betaprod {\pare{\tau + 1}} \ii^2 \pare{1-\iterationbeta \tau}^2}
		+ \Lz \sum_{\tau = 2}^\ii \iterationeta \tau \betaprod \tau \ii \iterationC \tau
		 + \Lo \sum_{\tau = 2}^\ii \iterationeta \tau \betaprod \tau \ii \iterationD \tau \Expnorm{\nF \pare{\iterationx \tau}},
	\end{align*}
	where $\prodb a b$ denotes $\prod_{\ii = a}^b \betat$ and $\Ct, \Dt$ are defined as in \Cref{lem:analysis.descent_lemma}.
\end{lemma}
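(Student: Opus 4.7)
The plan is to unroll the momentum recursion and split the deviation $m_\iterationIndex - \nabla F(x_\iterationIndex)$ into a zero-mean noise component plus a deterministic drift component that captures how $\nabla F$ varies along the trajectory. Starting from the one-step identity
\[
    m_\iterationIndex - \nabla F(x_\iterationIndex) = \beta_\iterationIndex\bigl(m_{\iterationIndex-1} - \nabla F(x_{\iterationIndex-1})\bigr) + \beta_\iterationIndex\bigl(\nabla F(x_{\iterationIndex-1}) - \nabla F(x_\iterationIndex)\bigr) + (1-\beta_\iterationIndex)\bigl(g_\iterationIndex - \nabla F(x_\iterationIndex)\bigr),
\]
I would iterate down to $\iterationIndex=1$ and use $m_0=0$ together with the hypothesis $\beta_1=0$ to eliminate the boundary term, producing the clean decomposition
\[
    m_\iterationIndex - \nabla F(x_\iterationIndex) = \sum_{\tau=1}^\iterationIndex (1-\beta_\tau)\prodb{\tau+1}{\iterationIndex}\bigl(g_\tau - \nabla F(x_\tau)\bigr) + \sum_{\tau=2}^\iterationIndex \prodb{\tau}{\iterationIndex}\bigl(\nabla F(x_{\tau-1}) - \nabla F(x_\tau)\bigr).
\]

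For the noise sum the random vectors $g_\tau - \nabla F(x_\tau)$ are zero-mean given $\Fc_{\tau-1}$ by Assumption \ref{assum:bounded_var} (i) and mutually uncorrelated, since $x_\tau$ is measurable with respect to $\iterationxi 1,\ldots,\iterationxi{\tau-1}$. Jensen's inequality combined with the variance bound Assumption \ref{assum:bounded_var} (ii) therefore yields
\[
    \E\left\|\sum_{\tau=1}^\iterationIndex (1-\beta_\tau)\prodb{\tau+1}{\iterationIndex}(g_\tau - \nabla F(x_\tau))\right\| \leq \sigma\sqrt{\sum_{\tau=1}^\iterationIndex (1-\beta_\tau)^2 \prodb{\tau+1}{\iterationIndex}^2},
\]
matching the first term of the claim. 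The drift sum is handled by the triangle inequality, which reduces matters to bounding each $\E\|\nabla F(x_{\tau-1}) - \nabla F(x_\tau)\|$ under Assumption \ref{assum:lzlo_smooth}, exploiting that the normalized update forces $\|x_{\tau-1}-x_\tau\| = \eta_{\tau-1}$.

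The main technical obstacle is producing the sharp $B$-type constants $C_\tau, D_\tau$ inherited from Lemma \ref{lem:analysis.descent_lemma} rather than the looser $A$-type constants that would drop out of a direct application of Definition \ref{def:lzlo_smoothness}. I would address this by writing the gradient difference as $\int_0^1 \nabla^2 F\bigl(x_{\tau-1}+s(x_\tau-x_{\tau-1})\bigr)(x_\tau-x_{\tau-1})\,ds$, bounding the Hessian norm by $L_0 + L_1\|\nabla F(\cdot)\|$ via Lemma \ref{lem:equivalence_different_lzlo_defs}, controlling the gradient norm along the segment by a Gronwall-type growth estimate (the gradient analogue of the integration inside the proof of Lemma \ref{lem:app.lzlo_property}), and carrying out the resulting one-dimensional integrals to recover exactly the $B_0,B_1$ coefficients. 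Plugging this gradient-Lipschitz inequality into the drift sum, taking expectations and reindexing as needed to align with the $\eta_\tau$-indexed constants used in the statement then produces the remaining two terms and completes the proof.
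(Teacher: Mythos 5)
Your unrolled decomposition of $m_\ii - \nFxk$ into a weighted zero-mean noise sum plus a drift sum $\sum_{\tau=2}^{\ii}\betaprod{\tau}{\ii}\pare{\nF\pare{x_{\tau-1}} - \nF\pare{x_\tau}}$, the use of $\iterationbeta{\fin}=0$ to remove the boundary term, and the Jensen-plus-uncorrelatedness treatment of the noise term (conditioning on $\Fc_{\tau-1}$) reproduce the paper's argument exactly. The divergence is in the drift term, and it is where your plan breaks. The paper handles $\norm{\nF\pare{x_{\tau-1}}-\nF\pare{x_\tau}}$ by a one-line application of the \LzLo-smoothness inequality of \Cref{def:lzlo_smoothness} to the pair $\pare{x_{\tau-1},x_\tau}$; it does not (and cannot) derive the $B$-type coefficients for the gradient difference. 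Your proposed route --- Hessian representation, Gronwall growth estimate, then integration --- has two problems. First, it requires $F$ to be twice differentiable, which \Cref{def:lzlo_smoothness} is deliberately designed to avoid (\Cref{lem:equivalence_different_lzlo_defs} only bridges the two formulations under $C^2$ regularity). Second, and decisively, the computation you describe yields the $A$-type constants, not the $B$-type ones: integrating the Gronwall bound over the segment gives $\int_0^1 e^{sc}\,ds = \pare{e^c-1}/c = \Ao\pare c$, whereas $\Bo\pare c = 2\pare{e^c-1-c}/c^2 = 2\int_0^1 s\,\Ao\pare{sc}\,ds$ owes its extra factor of $s$ to the inner product with $\pare{y-x}$ in the function-value expansion of \Cref{lem:app.lzlo_property}; that factor has no analogue in the gradient expansion. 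In fact the $\Bo$-version of the gradient-Lipschitz inequality is false: for $F(x)=e^{\Lo x}/\Lo$ one has $\lvert F'(h)-F'(0)\rvert = e^{\Lo h}-1 = \Ao\pare{\Lo h}\Lo h > \Bo\pare{\Lo h}\Lo h$.

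So the ``main technical obstacle'' you single out is not surmountable by your method. The correct step is the direct application of \Cref{def:lzlo_smoothness}, giving $\norm{\nF\pare{x_{\tau-1}}-\nF\pare{x_\tau}} \leq \norm{x_{\tau-1}-x_\tau}\pare{\Az\pare{c}\Lz + \Ao\pare{c}\Lo\norm{\nF\pare{x_\tau}}}$ with $c = \Lo\norm{x_{\tau-1}-x_\tau}$, and the coefficients $\iterationC\tau,\iterationD\tau$ in the drift terms must then be read as these $A$-type quantities (the paper is itself loose on this point, recycling the $B$-type symbols from the descent lemma). Relatedly, be careful with your ``reindexing'': the normalized update gives $\norm{x_{\tau-1}-x_\tau} = \iterationeta{\tau-1}$, which is larger than $\iterationeta\tau$ for decaying stepsizes, so it cannot simply be replaced by $\iterationeta\tau$ without adjusting constants.
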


\begin{proof}
    This proof is motivated by \citet{nsgdm}, and similar arguments are carried by \citet{ImprovedAnalysis} and \citet{Junchi2022}. To simplify notation we first define
    \begin{align*}
        \mut &\coloneqq \mt - \nFxk,\\
        \gat &\coloneqq \gt - \nFxk,\\
        \at &\coloneqq 1- \betat, \\
        \betaprod a b &\coloneqq \prod_{t = a}^b \betat.
    \end{align*}
    Now let $i,j \in \N, i < j$ and calculate
    \begin{align}\label{eq:analysis.uncorrelated}
        \begin{split}
            \Exp{\gaj^\top \gai}
            &= \Exp{\Exp[\Fc_{j-1}]{\gaj^\top \gai}}\\
            &= \Exp{\Exp[\Fc_{j-1}]{\gaj}^\top \gai}\\
            &= 0,
        \end{split}
    \end{align}
    where we used that $\Exp[\Fc_{j-1}]{\gaj} = 0$ in the last equality. Next we define $\St \coloneqq \nF \pare \xtm - \nF \pare \xt$ and calculate
    \begin{align*}
        \mt 
        &= \betat \mtm + \pare{1-\betat}\gt \\
        &= \betat \pare{\nF \pare \xkm + \mutm} + \pare{1- \betat} \pare{\gat + \nFxk}\\
        &= \nFxk + \pare{1 - \betat} \gat + \betat \St + \betat \mutm.
    \end{align*}
    This yields
    \begin{align*}
        \mut
        &= \betaprod 2 \ii \iterationmu 1 
        + \sum_{\tau = 2}^\ii \betaprod {\pare{\tau + 1}} \ii \iterationa \tau \iterationga \tau
        + \sum_{\tau = 2}^\ii \betaprod \tau \ii \iterationS \tau
        =  \sum_{\tau = 1}^\ii \betaprod {\pare{\tau + 1}} \ii \iterationa \tau \iterationga \tau
        + \sum_{\tau = 2}^\ii \betaprod \tau \ii \iterationS \tau,
    \end{align*}
    where we used $\iterationbeta \fin = 0$ in the second inequality. Therefore
    \begin{align*}
        \Expnorm \mut
        &\leq \Expnorm{\sum_{\tau = 1}^\ii \betaprod {\pare{\tau + 1}} \ii \iterationa \tau \iterationga \tau}
        + \sum_{\tau = 2}^\ii \betaprod \tau \ii \Expnorm{\iterationS \tau}.
    \end{align*}
    To further concretize this upper bound, \eqref{eq:analysis.uncorrelated} firstly yields
    \begin{align*}
        \Expnorm{\sum_{\tau = 1}^\ii \betaprod {\pare{\tau + 1}} \ii \iterationa \tau \iterationga \tau}
        &\leq \sqrt{ \sum_{\tau = 1}^\ii \betaprod {\pare{\tau + 1}} \ii^2 \at^2 \varsym^2}.
    \end{align*}
    Secondly, \assumlzlo~implies
    \begin{align*}
        \norm \St
        &\leq \etat \pare{ \Ct \Lz+ \Dt  \Lo\norm \nFxk} 
    \end{align*}
    and hence
    \begin{align*}
        \sum_{\tau = 2}^\ii \betaprod \tau \ii \Expnorm{\iterationS \tau}
        &\leq 
        \Lz \sum_{\tau = 2}^\ii \iterationeta \tau \betaprod \tau \ii \iterationC \tau
        + \Lo \sum_{\tau = 2}^\ii \iterationeta \tau \betaprod \tau \ii \iterationD \tau \Expnorm{\nF \pare{\iterationx \tau}}.
    \end{align*}
    Putting these results together we get the claim.
\end{proof}

Now we are ready for the main result.

\begin{theorem}[\nsgdm~for \LzLo-smoothness]\label{thm:analysis.background_main_result}
	\newcommand{\descentUb}{F \pare{\iterationx \fin} - F^* + \constOne}
	Assume \assumlb, \assumlzlo~and \assumbounded. Let $\ssi > 0$ and define the parameters
	\begin{align*}
		\betat &\coloneqq 1 - \ii^{-\nicefrac 1 2}\\
		\etat &\coloneqq \ssi\ii^{-\nicefrac 3 4}.
	\end{align*}
	Then \nsgdm~with starting point $\iterationx \fin \in \R^d$ satisfies
	\begin{align*}
		\isum \frac {\etat} 2 \Expnorm \nFxk
		\leq&\ \Dz + \eta \varsym \pare{7 + 2\sqrt{2e^2} \log \pare \li} + \eta^2 \Lz \pare{45\frac{e^\Aa - 1}\Aa + 14 \log \pare \li}\\
		&\ + 21 \ssi^2 \Lz e^{48 \pare \Aa^2} + 6 \ssi e^{48 \pare \Aa^2} \norm{\nF \pare \xfin},
	\end{align*}
	where $\Dz \coloneqq F \pare{\xfin} - F^*$. Furthermore, if $\Lo \geq \nicefrac 1 {2 \ssi}$, the statement also holds when replacing $6 \ssi e^{48 \pare \Aa^2} \norm{\nF \pare \xfin}$ with $\frac {e^{48 \pare \Aa^2}} {\Lo} \norm{\nF \pare \xfin}$.
\end{theorem}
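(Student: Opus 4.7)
\medskip

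\noindent\textbf{Proof Plan.}
My plan is to combine the three ingredients already available: the Descent Lemma (\Cref{lem:analysis.descent_lemma}), the Momentum Deviation Bound (\Cref{lem:analysis.mu_bound}), and the explicit technical estimates (\Cref{lem:technical.explicit}). Starting from the summed version of the Descent Lemma, I take expectations to obtain
\begin{align*}
  \sum_{t=1}^T \pare{\eta_t - \tfrac{L_1 \eta_t^2 D_t}{2}} \Exp{\norm{\nF(\xt)}}
  \leq \Delta_1 + \tfrac{L_0}{2}\isum \eta_t^2 C_t + 2\isum \eta_t \Exp{\norm{\nF(\xt) - \mt}}.
\end{align*}
Then I substitute the bound on $\Exp{\norm{\mt - \nFxk}}$ from \Cref{lem:analysis.mu_bound}, which produces three kinds of contributions on the right-hand side: a pure noise term (coefficient $\sigma$), a pure $L_0$ term, and a coupling term involving $L_1$ and $\Exp{\norm{\nF(x_\tau)}}$ weighted by $\eta_\tau D_\tau$ and products of $\beta$'s.

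Next, I dispatch the easy terms with \Cref{lem:technical.explicit}. Part b)i) bounds the $L_0 \sum \eta_t^2 C_t$ term by $O(\eta^2 L_0 \tfrac{e^{\eta L_1}-1}{\eta L_1})$; parts a)i)--a)ii) bound the $\sigma$ noise contributions, giving the $\eta\sigma (7 + 2\sqrt{2e^2}\log T)$ factor; and part b)ii) converts the double-sum $L_0$ coupling $L_0 \sum_t \eta_t \sum_\tau \eta_\tau \prod_\kappa \beta_\kappa C_\tau$ into $O(\eta^2 L_0 (\tfrac{e^{\eta L_1}-1}{\eta L_1} + \log T))$. The constants in the theorem (45 and 14) collect the contributions from these parts together with the factor of 2 from the descent lemma.

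The main obstacle is the remaining $L_1$-coupling term $L_1 \sum_t \eta_t \sum_{\tau=2}^t \eta_\tau \prodb{\tau}{t} D_\tau \Exp{\norm{\nF(x_\tau)}}$, together with the unwanted subtractive $-\tfrac{L_1 \eta_t^2 D_t}{2}\Exp{\norm{\nFxk}}$ on the left. My strategy here is a two-phase argument. I define an adaptation threshold $t_0$ roughly of order $(\eta L_1)^{4/3}$ so that for $t \geq t_0$, one has $L_1 \eta_t D_t \leq 1$, which makes the left coefficient at least $\eta_t/2$. For $t < t_0$, I control $\Exp{\norm{\nF(x_\tau)}}$ via the iterated $(L_0, L_1)$-smoothness inequality, which yields a bound of the form $\Exp{\norm{\nF(x_\tau)}} \leq e^{L_1 d_\tau}(\norm{\nF(x_1)} + \tfrac{L_0}{L_1})$, where $d_\tau = 4\eta (\tau-1)^{1/4} - 3\eta$ is the cumulative displacement bound (since normalized steps have length $\eta_s$ and $\sum_{s < \tau} \eta_s \leq 4\eta (\tau-1)^{1/4}$). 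This is precisely the form handled by \Cref{lem:technical.explicit} c), whose bounds evaluated at $b \approx t_0$ produce the $e^{48(\eta L_1)^2}$ exponential factor --- the exponent $48(\eta L_1)^2$ arises as $4 \eta L_1 \cdot t_0^{1/4}$ with the constant absorbing the slack between $t_0$ and the condition $L_1 \eta_t D_t \leq 1$.

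Finally, I rearrange to isolate $\isum \tfrac{\eta_t}{2}\Exp{\norm{\nF(\xt)}}$ on the left by moving the difference $(L_1 \eta_t^2 D_t - \eta_t)/2$-terms to the right in the adaptation phase (where they are dominated by the exponential gradient-growth bound) and absorbing them on the left in the convergence phase. Summing the two phases and grouping constants yields the stated inequality. For the refinement under $L_1 \geq \nicefrac 1{2\eta}$, I invoke part c)iii) of \Cref{lem:technical.explicit} in place of c)i)--ii); this requires exactly the hypothesis $\Aa \geq \tfrac 1 2$ and trades the prefactor $6\eta$ for the improved $\nicefrac 1{L_1}$ in front of $\norm{\nF(x_1)}$.
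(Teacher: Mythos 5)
Your plan follows the paper's proof essentially step for step: the same combination of \Cref{lem:analysis.descent_lemma} and \Cref{lem:analysis.mu_bound}, the same dispatch of the $\varsym$- and $\Lz$-terms via \Cref{lem:technical.explicit} $a$) and $b$), the same two-phase split with an exponential gradient-growth bound in the adaptation phase handled by \Cref{lem:technical.explicit} $c$), and the same switch to $c$) $iii$) under $\Aa \geq \nicefrac 1 2$ for the refined second statement. There is, however, one concrete error in the quantitative heart of the argument: your adaptation threshold is too small, and your own exponent computation contradicts it. The condition $\Lo \etat \Dt \leq 1$ (giving $t_0 \asymp \pare\Aa^{4/3}$) only neutralizes the subtractive term $-\frac{\Lo\etat^2\Dt}{2}\Expnorm\nFxk$ coming from the descent lemma. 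The binding constraint is the momentum-coupling term: after interchanging the order of summation and applying \Cref{lem:technical.general} $ii$), the term $2\Lo\sum_{\tau}\iterationeta\tau \iterationD\tau\bigl(\sum_{\ii\geq\tau}\etat\prodb\tau\ii\bigr)\Expnorm{\nF\pare{\iterationx\tau}}$ contributes a coefficient of order $\Lo\ssi\,\iterationeta\tau\,\tau^{-\of}\iterationD\tau$ to $\Expnorm{\nF\pare{\iterationx\tau}}$; absorbing this into $\nicefrac{\iterationeta\tau}{2}$ on the left requires $\Lo\ssi\tau^{-\of}\iterationD\tau \lesssim \nicefrac 1 2$, i.e.\ $\tau \gtrsim \pare\Aa^4$. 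This is why the paper takes $\tz = \ceil{\pare{12\Aa}^4}$. With your threshold, the coupling term remains unabsorbed for $t$ between $\pare\Aa^{4/3}$ and $\pare\Aa^4$, and the final inequality does not follow. Note that your own formula for the exponent, $4\Aa\, t_0^{\of} = 48\pare\Aa^2$, already forces $t_0 = \pare{12\Aa}^4$; a threshold of order $\pare\Aa^{4/3}$ would instead yield $e^{O(\pare\Aa^{4/3})}$, so the proposal is internally inconsistent on this point.

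A secondary remark: in the adaptation phase you bound $\Expnorm{\nF\pare{\iterationx\tau}} \leq e^{\Lo d_\tau}\pare{\norm{\nF\pare\xfin} + \nicefrac{\Lz}{\Lo}}$, whereas the paper uses $e^{\Lo\delta_\tau}\Lz\delta_\tau + e^{\Lo\delta_\tau}\norm{\nF\pare\xfin}$. Your form is derivable but it feeds a $\nicefrac{\Lz}{\Lo}e^{\Lo d_\tau}$ term into the estimates (matching $c$) $ii$) rather than $c$) $i$)), which produces a $\nicefrac{\Lz}{\Lo}$ prefactor instead of the stated $\ssi^2\Lz$ and degrades for small $\Lo$. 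To recover the theorem's constants $21\ssi^2\Lz e^{48\pare\Aa^2}$ and $6\ssi e^{48\pare\Aa^2}$ you should keep the $\Lz\delta_\tau e^{\Lo\delta_\tau}$ form so that $c$) $i$) applies to the $\Lz$-part and $c$) $ii$) to the $\norm{\nF\pare\xfin}$-part. Once the threshold is corrected to $\pare{12\Aa}^4$ and the gradient bound is taken in the paper's form, your argument coincides with the paper's and goes through.
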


The main workhorse behind the following proof is \Cref{lem:technical.explicit}. It intuitively states that the quantities which emerge due to the nonconstant parameters behave (nearly) \emph{asymptotically the same} as constant stepsizes would.

\begin{proof}
	\newcommand{\dT}{\delta_\ii}
	To simplify notation we define
	\begin{align*}
		\betaprod a b \coloneqq \prod_{\tau = a}^b \iterationbeta \tau.
	\end{align*}
	We start the proof by combining \Cref{lem:analysis.descent_lemma} and \Cref{lem:analysis.mu_bound} to obtain
	\begin{align*}
		\isum \etat \Expnorm \nFxk
		\stackAlign{\ref{lem:analysis.descent_lemma}}{\leq}\,
		\Dz + \frac {\Lz} 2 \isum \etat^2 \Ct + \frac {\Lo} 2 \isum \etat^2 \Dt \Expnorm \nFxk
		+ 2 \isum \etat \Expnorm{\nFxk - \mt}\\
		\stackAlign{\ref{lem:analysis.mu_bound}}{\leq}\,
		\Dz + \frac {\Lz} 2 \isum \etat^2 \Ct + \frac {\Lo} 2 \isum \etat^2 \Dt \Expnorm \nFxk
		+ 2 \varsym \isum \etat \sqrt{\sum_{\tau = 1}^\ii \iterationa \tau^2 \pare{\betaprod{\pare{\tau + 1}}{\ii}}^2}\\
		&\ +2 \Lz \isum \etat \sum_{\tau = 2}^\ii \iterationeta \tau \iterationC \tau \betaprod \tau \ii
		+ 2 \Lo \isum \etat \sum_{\tau = 2}^\ii \iterationeta \tau \iterationD \tau \betaprod \tau \ii \Expnorm{\nF \pare{\iterationx \tau}}.
	\end{align*}
	Next, we use \Cref{lem:technical.explicit} $a$) and $b$) to bound all terms that are independent of the iterates $\xt$. This leaves us with
	\begin{align}\label{lzlo.infnsgd.eq:gen_main_thm_main_bound_1}
		\begin{split}
			\isum \etat \Expnorm \nFxk
			\leq&\ \Dz + \eta \varsym \pare{7 + 2\sqrt{2e^2} \log \pare \li} + \eta^2 \Lz \pare{45\frac{e^\Aa - 1}\Aa + 14 \log \pare \li}\\
			&\ + \underbrace{\frac {\Lo} 2 \isum \etat^2 \Dt \Expnorm \nFxk + 2\Lo \sum_{\tau = 2}^\li \etatau \pare{\sum_{\ii = \tau}^\li \etat \betaprod \tau \ii} \iterationD \tau \Expnorm{\nF \pare \xtau}}_{\eqqcolon (A)}, 
		\end{split}
	\end{align}
	where we rearranged the sums of the last term. We then focus on upper bounding $(A)$. Therefore we use \Cref{lem:technical.general} $ii)$ which yields
	\newcommand{\const}{M}
	\begin{align*}
		(A) 
		&\leq
		\isum \etat \Dt \pare{\frac {\Lo} 2 \etat + 2e^{2\pare{\sqrt 2 - 1}}\Lo \eta \ii^{-\of} } \Expnorm \nFxk
		\leq \isum \etat \Dt \pare{\const \Lo \ssi \ii^{- \nicefrac 1 4} } \Expnorm \nFxk,
	\end{align*}
	where $\const \coloneqq \frac 1 2 + 2 \exp\pare{2\sqrt 2 - 2} \leq 5.1$.
	In a setting with access to problem parameters, we could now set $\ssi \coloneqq \frac 1 {12 \Lo}$ and hence guarantee that $\const \ssi \Lo \ii^{-\of} \Dt \leq \frac 1 2$, which would complete the proof. In the parameter agnostic setting we have to wait until the stepsize decreased below this threshold. We therefore define the threshold $\tz \coloneqq \ceil{\pare{12\eta \Lo}^4}$ after which we again have  $\const\ssi \Lo \ii^{-\of} \Dt \leq \frac 1 2$. This is due to $\Dt \leq \iterationD \tz \leq \frac {12} {2\const}$ for $t \geq \tz$. We are therefore left with the task of controlling the sum in $(A)$ up to $\tz$, i.e.~$(B)$ in
	\begin{align}\label{lzlo.infnsgd.eq:gen_main_thm_A_bound_1}
		(A) 
		\leq \underbrace{\sum_{\ii = 1}^{\tz - 1} \etat \pare{\const \Lo \ssi \ii^{- \nicefrac 1 4} \Dt} \Expnorm \nFxk}_{(B)}
		+ \sum_{\ii = \tz}^{\li} \frac \etat 2 \Expnorm \nFxk.
	\end{align}	
	We start by upper bounding $\norm \nFxk$ using \assumlzlo. For $\dT \coloneqq \norm{\xt - \iterationx \fin} \leq 4 \eta \ii ^\of - 3 \ssi$ our smoothness assumption implies
	\begin{align*}
		\norm \nFxk 
		\leq \norm{\nF \pare \xfin} + \norm{\nFxk - \nF \pare \xfin} \leq e^{\Lo \dT} \Lz \dT + e^{\Lo \dT} \norm{\nF \pare \xfin}
	\end{align*}
	and plugging into $(B)$ yields
	\begin{align*}
		(B) 
		\leq \underbrace{\pare{\ssi \const \sum_{\ii = 2}^{\tz - 1} \Lo \etat \ii^{-\of} \dT \Dt e^{\Lo \dT}}}_{\eqqcolon (B1) }\Lz
		+\underbrace{ \pare{ \ssi  \const \sum_{\ii = 1}^{\tz - 1} \Lo \ssi \ii^{-1} \Dt e^{\Lo \dT}}}_{\eqqcolon (B2)} \norm{\nF \pare \xfin}.
	\end{align*}
	Now \Cref{lem:technical.explicit} $c)$ $i)$ allows us to upper bound $(B1)$ via
	\begin{align*}
		(B1) 
		&\leq \ssi^2 \const \Lz \pare{ \frac \Aa 2 e^{2\Aa} + 4 e^{- \frac 5 2 \Aa} \pare{e^{4 \Aa \pare{\tz - 1}^\of} - e^{4 \Aa}}}\\
		&\leq \ssi^2 \const \Lz \pare{\pare{\frac \Aa 2 - 4}e^{2 \Aa} + 4e^{4 \Aa \pare{\tz - 1}^\of}}\\
		&\leq \ssi^2 \const \Lz \pare{\pare{\frac \Aa 2 - 4}e^{2 \Aa} + 4e^{48 \pare \Aa^2}},
	\end{align*}
	where we used the definition of $\tz$ in the last inequality. Next we use that, for all $x \geq 0$, we have $(\nicefrac x 2 - 4) e^{4x} + e^{48x^2} \leq \frac {21} {4\const} e^{48x^2}$ and hence
	\begin{align*}
		(B1) \leq 21 \ssi^2 \Lz e^{48 \pare \Aa^2}.
	\end{align*}
	Using \Cref{lem:technical.explicit} $c)$ $ii)$ and the same technique as for $(B1)$ we obtain
	\begin{align*}
		(B2)
		&\leq \ssi \const \pare{\frac 3 2 \Aa e^{\nicefrac 5 3 \Aa} + e^{- \nicefrac 5 2 \Aa} \pare{e^{4 \Aa \pare{\tz - 1}^\of} - e^{4\Aa}}}\\
		&\leq \ssi \const \pare{\pare{\frac{3}{2}\Aa - 1}e^{2\Aa} + e^{48\pare\Aa^2}}\\
		&\leq 6 \ssi e^{48 \pare \Aa^2} < \frac 3 {\Lo} e^{48 \pare \Aa^2}.
	\end{align*}
	We plug these results into \eqref{lzlo.infnsgd.eq:gen_main_thm_A_bound_1} to obtain
	\begin{align*}
		(A) \leq 
		21 \ssi^2 \Lz e^{48 \pare \Aa^2} \Lz
		+ 6 \ssi e^{48 \pare \Aa^2} \norm{\nF \pare \xfin}
		+ \sum_{\ii = \tz}^{\li} \frac \etat 2 \Expnorm \nFxk
	\end{align*}
	and combing with \eqref{lzlo.infnsgd.eq:gen_main_thm_main_bound_1} yields
	\begin{align*}
		\frac 1 2 \isum \etat \Expnorm \nFxk
		\leq&\ \Dz + \eta \varsym \pare{7 + 2\sqrt{2e^2} \log \pare \li} + \eta^2 \Lz \pare{45\frac{e^\Aa - 1}\Aa + 14 \log \pare \li}\\
		&\ + 21 \ssi^2 \Lz e^{48 \pare \Aa^2} \Lz + 6 \ssi e^{48 \pare \Aa^2} \norm{\nF \pare \xfin}.
	\end{align*}
	This finishes the proof of the first statement.
	
	For the second statement assume $\Aa \geq \nicefrac 1 2$. In this case we apply \Cref{lem:technical.explicit} $c)$ $iii)$ and get
	\begin{align*}
		(B2)
		&\leq \ssi \const \pare{\frac 3 2 \Aa e^{\nicefrac 5 3 \Aa} + e^{-\nicefrac 5 2 \Aa} 
			\pare{2 \pare{\tz - 1}^{- \nicefrac 1 4} e^{4\Aa \pare{\tz-1}^{\nicefrac 1 4}} - e^{4\Aa} }}\\
		&\leq \ssi \const \pare{\pare{\frac 3 2 \Aa - 1}e^{2 \Aa} + \frac 1 {6 \Aa}e^{48 \pare \Aa^2}}\\
		&\leq \frac 1 {\Lo} e^{48 \pare \Aa^2}
	\end{align*}
	Proceeding as before yields the second claim.	
\end{proof}

By plugging in $\ssi = \nicefrac 1 7$ we now get the formal result of \Cref{thm:main_result}.

\begin{corollary}\label{cor:analysis.remove_stepsize}
	Assume \assumlb, \assumlzlo~and \assumbounded. Furthermore define the parameters $\betat \coloneqq 1 - \ii^{-\nicefrac 1 2}$ and $\etat \coloneqq \frac {\ii^{-\nicefrac 3 4}} 7$.
	Then \nsgdm~with starting point $\iterationx \fin \in \R^d$ satisfies
	\begin{align*}
		\frac 1 \li \isum \Expnorm \nFxk
		\leq&\ 
		\frac{\pare{14 + 96 \Lo e^{\Lo^2}}\Dz + \pare{14e^{\nicefrac {\Lo} 7} + 9 \log \pare \li + 2e^{\Lo^2}}\Lz} 
		{\li^\of} \\
		&\ + \frac{12e \log \pare \li \varsym  + 6e^{\Lo^2} \min \set{\frac {\Lz} {\Lo}, \sqrt{8 \Lz \Dz}}}{\li^\of},
	\end{align*}
	where $\Dz \coloneqq F \pare{\xfin} - F^*$ is the initialization gap. Furthermore, if $\Lo \geq \nicefrac 7 2$, we get the following improved dependence on $\Lo$:
	\begin{align*}
		\frac 1 \li \isum \Expnorm \nFxk
		\leq&\ 
		\frac{126 e^{\Lo^2} \Dz + 12e\log \pare \li \varsym + \pare{4 e^{\Lo^2} + 9 \log \pare \li} \Lz}
		{\li^\of}.
	\end{align*}
\end{corollary}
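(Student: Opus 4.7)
The plan is to specialize \Cref{thm:analysis.background_main_result} to $\eta = \nicefrac 1 7$ and then perform two mechanical reductions. With this choice, $\Aa = \eta \Lo = \nicefrac{\Lo}{7}$ and $48 \Aa^2 = \nicefrac{48 \Lo^2}{49} \leq \Lo^2$, so every appearance of $e^{48 \Aa^2}$ is upper bounded by $e^{\Lo^2}$, matching the exponential factor in the claim.

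The first reduction converts the weighted left-hand side of the theorem into the unweighted average. Since $\etat = \nicefrac{\ii^{-\nicefrac 3 4}}{7}$ is monotonically decreasing in $\ii$, we have $\etat \geq \iterationeta{\li} = \nicefrac{\li^{-\nicefrac 3 4}}{7}$ for every $\ii \leq \li$, and hence
\begin{align*}
\frac{\iterationeta{\li}}{2} \isum \Expnorm{\nFxk} \;\leq\; \isum \frac{\etat}{2} \Expnorm{\nFxk} \;\leq\; B,
\end{align*}
where $B$ denotes the right-hand side of \Cref{thm:analysis.background_main_result} evaluated at $\eta = \nicefrac 1 7$. Rearranging gives $\frac{1}{\li} \isum \Expnorm{\nFxk} \leq \nicefrac{14 B}{\li^{\nicefrac 1 4}}$, so it only remains to express $B$ entirely in terms of the problem parameters stated in the corollary.

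The second reduction removes the residual $\norm{\nF \pare \xfin}$ in $B$ via \Cref{lem:app.gradient_bound}. That lemma yields a case distinction: either $\norm{\nF \pare \xfin} \leq \nicefrac{\Lz}{\Lo}$, in which case also $\norm{\nF \pare \xfin} \leq \sqrt{8 \Lz \Dz}$, or $\norm{\nF \pare \xfin} \leq 8 \Lo \Dz$. Combining the two cases yields the uniform bound
\begin{align*}
\norm{\nF \pare \xfin} \;\leq\; \min\!\left\{\nicefrac{\Lz}{\Lo},\; \sqrt{8 \Lz \Dz}\right\} + 8 \Lo \Dz.
\end{align*}
Plugging this into the $6 \eta e^{48 \Aa^2} \norm{\nF \pare \xfin}$ summand of $B$ separates it into a $\Lo e^{\Lo^2} \Dz$ contribution and an $e^{\Lo^2} \min\{\nicefrac{\Lz}{\Lo}, \sqrt{8 \Lz \Dz}\}$ contribution, which are exactly the two terms appearing in the corollary. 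The other summands of $B$ are simplified using $(e^x - 1)/x \leq e^x$ (to bound $45 \eta^2 \Lz (e^\Aa - 1)/\Aa$ by a multiple of $\Lz e^{\nicefrac{\Lo}{7}}$) and by folding the constant $\sigma$-term into the $\sigma \log \li$ factor, which is valid for $\li$ sufficiently large.

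For the improved bound under the assumption $\Lo \geq \nicefrac 7 2 = \nicefrac{1}{2\eta}$, the second statement of \Cref{thm:analysis.background_main_result} replaces $6 \eta e^{48 \Aa^2}$ by $\nicefrac{e^{48 \Aa^2}}{\Lo}$. This strips the factor $\Lo$ from the $e^{\Lo^2} \Dz$ contribution and turns the $\min$-contribution into at most $\nicefrac{e^{\Lo^2} \Lz}{\Lo^2}$, which absorbs into the $\Lz e^{\Lo^2}$ summand because $\Lo^2 \geq \nicefrac{49}{4}$. All substantive analysis is already packaged in the theorem and in \Cref{lem:app.gradient_bound}; the only real obstacle is careful bookkeeping of numerical constants through the case analysis for $\norm{\nF \pare \xfin}$ and through the two regimes of $\Lo$.
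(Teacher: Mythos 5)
Your proposal is correct and follows essentially the same route as the paper: specialize \Cref{thm:analysis.background_main_result} to $\ssi = \nicefrac 1 7$ (so that $48 \pare{\Aa}^2 \leq \Lo^2$), pass from the $\etat$-weighted sum to the average exactly as the paper does via $\tfrac 1 \li \leq \li^{-\nicefrac 1 4} \ii^{-\nicefrac 3 4}$, eliminate $\norm{\nF \pare \xfin}$ through \Cref{lem:app.gradient_bound} with the bound $\norm{\nF \pare \xfin} \leq 8 \Lo \Dz + \min\set{\nicefrac{\Lz}{\Lo}, \sqrt{8 \Lz \Dz}}$, and invoke the theorem's second statement for the $\Lo \geq \nicefrac 7 2$ regime. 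The remaining work is the same constant bookkeeping (using $\log \li \geq 1$ and $(e^x-1)/x \leq e^x$) that the paper carries out.
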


\begin{proof}
	Plugging the choice of $\ssi = \frac 1 7$ into \Cref{thm:analysis.background_main_result} and using that $\log \pare \li \geq 1$ yields
	\begin{align*}
		\frac {\ssi} {2} \isum \ii^{- \nicefrac 3 4} \Expnorm{\nFxk}
		\leq&\ 
		\Dz + {6e} \ssi \log \pare \li \varsym + 6 \ssi e^{\Lo^2} \norm{\nF \pare \xfin}
		 + \ssi \Lz \pare{7 e^{\nicefrac {\Lo} 7} + \frac {30} 7 \log \pare \li + 2 e^{\Lo^2}}.
	\end{align*}
	Next, from the proof of \Cref{lem:app.gradient_bound}, we get that 
	\begin{align*}
		\norm{\nF \pare{\xfin}} \leq 8 \Lo \Dz + \min \set{\frac {\Lz} {\Lo}, \sqrt{8 \Lz \Dz}}
	\end{align*}
	and hence, by noting that $\frac 1 \li \isum \Expnorm{\nFxk} \leq \li^{-\of} \isum \ii^{- \nicefrac 3 4} \Expnorm{\nFxk}$ we obtain
	\begin{align*}
		\frac 1 {\li^{\nicefrac 3 4}} \isum \Expnorm{\nFxk}
		\leq&\ \pare{14 + 96 \Lo e^{\Lo^2}}\Dz + 12e \log \pare \li \varsym
		+ \pare{14e^{\nicefrac {\Lo} 7} + 9 \log \pare \li + 2e^{\Lo^2}}\Lz\\
		&\ + 6e^{\Lo^2} \min \set{\frac {\Lz} {\Lo}, \sqrt{8 \Lz \Dz}}
	\end{align*}
	and hence proved the first claim.
	
	For the second claim assume $\Lo \geq \nicefrac 7 2$. We now can use the second statement in \Cref{thm:analysis.background_main_result} to get
	\begin{align*}
		\frac 1 {\li^{\nicefrac 3 4}} \isum \Expnorm{\nFxk}
		\leq&\ \pare{14 + 112 e^{\Lo^2}}\Dz + 12e \log \pare \li \varsym
		+ \pare{14e^{\nicefrac {\Lo} 7} + 9 \log \pare \li + 2e^{\Lo^2}}\Lz\\
		&\ + \frac {e^{\Lo^2}} {\Lo} \min \set{\frac {\Lz} {\Lo}, \sqrt{8 \Lz \Dz}}\\
		\leq&\ 126 e^{\Lo^2} \Dz + 12e\log \pare \li \varsym + \pare{4 e^{\Lo^2} + 9 \log \pare \li} \Lz,
	\end{align*}
	where we used that $14e^{\nicefrac {\Lo} 7} + \Lo^{-2} e^{\Lo^2} \leq e^{\Lo^2}$ for $\Lo \geq \nicefrac 7 2$.
\end{proof}

Finally we provide the formal statement of \Cref{cor:non_agnostic_nsgdm}.

\begin{corollary}[Non parameter-agnostic \nsgdm]\label{cor:analysis.non_agnostic}
	\newcommand{\descentUb}{F \pare{\iterationx \fin} - F^* + \constOne}
	Assume \assumlb, \assumlzlo~and \assumbounded. Furthermore define the parameters $\betat \coloneqq 1 - \ii^{-\nicefrac 1 2}$ and $\etat \coloneqq \frac {\ii^{-\nicefrac 3 4}} {12\Lo}$.
	Then \nsgdm~with starting point $\iterationx \fin \in \R^d$ satisfies
	\begin{align*}
		\frac 1 \li \isum \Expnorm \nFxk
		\leq \frac{24 \Lo \Dz + \pare{14 + 4 \sqrt{2e^2}\log \pare \li} \varsym + \pare{10 + 4 \log \pare \li} \frac {\Lz} {\Lo}}{\li^{\nicefrac 1 4}}.
	\end{align*}
	where $\Dz \coloneqq F \pare{\xfin} - F^*$ is the initialization gap.
\end{corollary}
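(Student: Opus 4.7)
The plan is to mirror the proof of \Cref{thm:analysis.background_main_result}, but exploit that the particular stepsize choice $\etat = \frac{t^{-3/4}}{12 \Lo}$ satisfies $12 \eta \Lo = 1$, so that the threshold $\tz \coloneqq \lceil (12 \eta \Lo)^4 \rceil$ used in that proof collapses to $\tz = 1$. Consequently the entire ``adaptation phase'' disappears, and with it the exponential-in-$\Lo$ factor.

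First, I would combine \Cref{lem:analysis.descent_lemma} with \Cref{lem:analysis.mu_bound} and invoke \Cref{lem:technical.explicit}~a) and b) exactly as in the theorem's proof to reach the master inequality (the analogue of \eqref{lzlo.infnsgd.eq:gen_main_thm_main_bound_1})
\begin{align*}
\isum \etat \Expnorm{\nFxk} \leq \Dz + \eta \varsym \bigl(7 + 2\sqrt{2e^2}\log T\bigr) + \eta^2 \Lz \Bigl(45\,\tfrac{e^{\eta \Lo}-1}{\eta \Lo} + 14 \log T\Bigr) + (A),
\end{align*}
where $(A)$ gathers the cross terms of the form $\Lo \etat \Dt \Expnorm{\nFxk}$.

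Second, I would reuse the termwise estimate $(A) \leq \sum_t \etat \Dt\,(M \Lo \eta\, t^{-1/4})\Expnorm{\nFxk}$ with $M \leq 5.1$ from the theorem's proof. Because $\eta \Lo = 1/12$, the monotonicity of $\Bo$ gives $\Dt \leq \Bo(1/12)$, and a short numerical check yields $M \eta \Lo\, \Bo(1/12) \leq 1/2$ uniformly in $t \geq 1$. Therefore $(A)$ is absorbed into the LHS termwise for \emph{every} $t$, so the pre-threshold sums $(B1),(B2)$ in the theorem's proof --- which were the only sources of the $e^{48(\eta \Lo)^2}$ term --- are empty. What remains is
\begin{align*}
\tfrac{1}{2}\isum \etat \Expnorm{\nFxk} \leq \Dz + \eta \varsym \bigl(7 + 2\sqrt{2e^2}\log T\bigr) + \eta^2 \Lz \Bigl(45\,\tfrac{e^{\eta \Lo}-1}{\eta \Lo} + 14 \log T\Bigr).
\end{align*}

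Finally, I would pass to the uniform average via $\etat \geq \eta T^{-3/4}$, which gives $\frac{1}{T}\isum \Expnorm{\nFxk} \leq \frac{2}{\eta T^{1/4}} \cdot (\text{RHS above})$, substitute $\eta = 1/(12 \Lo)$ (so that $\tfrac{2}{\eta T^{1/4}} = \tfrac{24 \Lo}{T^{1/4}}$), and simplify using $\tfrac{e^{1/12}-1}{1/12} \leq \tfrac{12}{11}$. The only real difficulty is bookkeeping: one must verify that the resulting coefficients collapse to the stated $24 \Lo$ on $\Dz$, $14 + 4\sqrt{2e^2}\log T$ on $\varsym$, and $10 + 4\log T$ on $\Lz/\Lo$. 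Once the adaptation phase has been identified as empty, this is routine algebra and no new ideas are needed.
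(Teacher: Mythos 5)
Your proposal is correct and follows essentially the same route as the paper: the paper's proof also plugs the stepsize $\etat = \frac{t^{-3/4}}{12\Lo}$ into the master inequality \eqref{lzlo.infnsgd.eq:gen_main_thm_main_bound_1}, absorbs the $(A)$-term termwise for all $t$ (exactly the observation, stated explicitly in the proof of \Cref{thm:analysis.background_main_result}, that $\ssi = \frac{1}{12\Lo}$ makes $\const\, \ssi \Lo\, \ii^{-\of} \Dt \leq \frac 1 2$ from the first iteration, so the adaptation phase is empty), and then divides through. Your constant bookkeeping also matches the stated coefficients.
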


\begin{proof}
	Denote $\ssi \coloneqq \nicefrac 1 {12}$. By plugging our choice of $\etat$ into \eqref{lzlo.infnsgd.eq:gen_main_thm_main_bound_1} we obtain
	\begin{align*}
		\isum \frac 1 2 \etat \Expnorm \nFxk
		\leq&\ \Dz + \ssi \varsym \pare{7 + 2\sqrt{2e^2} \log \pare \li} + \eta^2 \Lz \pare{55 +21 \log \pare \li}
	\end{align*}
	and by using the same arguments as in the proof of \Cref{thm:main_result} we get
	\begin{align*}
		\frac 1 \li \isum \Expnorm \nFxk
		\leq \frac{\frac{2\Dz}{\ssi} + \pare{14 + 4 \sqrt{2e^2}\log \pare \li} \varsym + \ssi \pare{110 + 42 \log \pare \li} \Lz}{\li^{\nicefrac 1 4}}.
	\end{align*}
\end{proof}

\subsubsection{Deterministic Setting} 
\label{sec:app.missing_proofs.upper_bounds.deterministic_setting}

In this subsection, we provide the result for \backtrGD.

\begin{proof}[Proof of \Cref{thm:linesearch_result}]
	\newcommand{\ub}{ u\pare{\Lz, \Lo, \Dz}}
	By \Cref{lem:app.gradient_bound} we have that $\norm{ \nF \pare x} \leq \max \set{8 \Lo \pare{F(x) - F^*}, \frac \Lz \Lo}$. Since \backtrGD~is a descent algorithm, we get that $\norm \nFxk \leq \max \set{8 \Lo \Dz, \frac \Lz \Lo}\eqqcolon \ub$ for all $\ii \in \N$. Now let $x\in \R^d$ be an iterate of \backtrGD~and $\eta \leq \frac 1 {\Lo }$. Then \Cref{lem:app.lzlo_property} implies
	\begin{align*}
		F \pare{x - \eta \nFx} 
		&\leq F(x) - \eta \nsq \nFx + \eta^2\pare{2\Lz + \pare{e-1}\Lo \norm \nFx} \nsq \nFx \\
		&\leq F(x) - \eta \nsq \nFx + \eta^2 \pare{2\Lz + \pare{e-1} u \pare{\Lz, \Lo, \Dz}} \nsq \nFx \\
		&= F(x) - \eta \pare{1 - \eta L}\nsq \nFx,
	\end{align*}
	where $L \coloneqq 2\Lz + \pare{e-1}\Lo \ub$. In particular we have that $F \pare{x - \eta \nF \pare x} \leq F(x) - \eta \beta \nsq \nFx$ whenever $\eta \leq \frac{1 - \beta} L$. This allows us to lower bound our stepsizes by $\etat > \frac{\gamma \pare{1-\beta}} L$. As in the $L$-smooth setting, the definition of $\xtp$ now yields
	\begin{align*}
		\frac \beta \li \isum \etat \nsq \nFxk
		&\leq \frac {\Dz} {\li}
	\end{align*}
	and thus
	\begin{align*}
		\frac 1 \li \isum \nsq \nFxk
		&\leq \frac{L\Dz}{\beta \gamma \pare{1-\beta} \li}.
	\end{align*}
	This finishes the proof.
\end{proof}

\subsection{Proofs for Parameter-Agnostic Lower Bounds}\label{sec:app.missing_proofs.lower_bounds}

\newpage
\section{Additional Discussions on Parameter-Agnostic Lower Bounds}
\label{sec:app.additional_discussion_lower_bounds}
In this section, we provide further discussion on the notion of parameter-agnostic lower bounds. Additionally, we highlight the difference between \Cref{def:lb} and the condition in \Cref{prop:sufficient_lb}.

The section is organised as follows: We start by introducing the necessary notation, assumptions, and definitions in \Cref{sec:app.additional_discussion_lower_bounds.prelims}. Subsequently, in \Cref{sec:app.additional_discussion_lower_bounds.another_view}, we present an alternative way to motivate our definition of parameter-agnostic lower bounds. This alternative perspective allows for a more intuitive distinctions between \Cref{def:lb} and the condition in \Cref{prop:sufficient_lb}, as discussed in \Cref{remark:app.another_view.difference}.

\subsection{Preliminaries}
\label{sec:app.additional_discussion_lower_bounds.prelims}

\begin{notation}
	Throughout this section, let $\Theta \subseteq \R^k$ denote a parameter space that is unbounded in each dimension, i.e.\ there exists a sequence $\theta^{(n)} \in \Theta$ such that $\theta^{(n)}_i \to \infty$ for all $i \in [k]$.
	Additionally, let $\FkThet = \set{\Fcthet \colon \theta \in \Theta}$ be a parameterized family of function spaces.
\end{notation}

For simplicity, we furthermore assume that all algorithms satisfy $\xfin = 0$. If this is not the case, we can apply $A$ to the shifted function $\tilde f(x) = f \pare{x - \xfin}$. For the scope of this section, we therefore restrict $\Acdet$ to deterministic algorithms that use $\xfin = 0$.

Lastly, we introduce a multivariate $\Oc$-notation. While the extension of the $\Oc$-notation to a multivariate setting comes with technical complexities, as noted by \cite{MultivariateO}, the straightforward extension is sufficient for our purposes.

\begin{definition}[Multivariate $\Oc$-Notation]\label{def:multivariate_O_notation}
	Consider a function $h \colon (0, \infty) \times \Theta \to [0, \infty]$. We employ the following definitions:
	\begin{enumerate}[label=\roman*)]
		\item The multivariate $\Oc$ is given by the set
		\begin{align*}
			\Oc \pare{h} \coloneqq \set{f \colon (0, \infty) \times \Theta \to [0, \infty] \mid \exs \eps_0, \theta_0, K > 0\, \fas \eps \in (0, \eps_0], \theta \geq \theta_0
				\colon f \pare{\eps, \theta} \leq K h \pare{\eps, \theta}}.
		\end{align*}
		\item Analogously, the multivariate $o$ is defined as the set
		\begin{align*}
			o \pare{h} \coloneqq \set{f \colon (0, \infty) \times \Theta \to [0, \infty] 
				\mid \fas \kappa > 0\,\exs \eps_0, \theta_0 >0 \, \fas \eps \in (0, \eps_0], \theta \geq \theta_0 \colon
				f \pare{\eps, \theta} \leq \kappa h \pare{\eps, \theta}}.
		\end{align*}
	\end{enumerate}
	Here $\theta \geq C$ is to be understood component-wise. We also adopt standard $\Oc$-notation $f \pare{\eps, \theta} = \Oc\pare{h \pare{\eps, \theta}}$, $\pare{\eps \to 0, \theta \to \infty}$ to indicate $f \in \Oc \pare{h}$. Analogously, we use $f \pare{\eps, \theta} = o\pare{h \pare{\eps, \theta}}, \ \pare{\eps \to 0, \theta \to \infty}$ to signify $f \in \Oc \pare{h}$.
\end{definition}

\subsection{Another Point of View}
\label{sec:app.additional_discussion_lower_bounds.another_view}

In this section, we re-examine the definition of parameter-agnostic lower bounds through the lens of order theory. This perspective serves two purposes. Firstly, it enables us to formally compare the performance of two parameter-agnostic algorithms. Secondly, it better highlights the differences between \Cref{def:lb} and \Cref{prop:sufficient_lb}.

To start off, we address the question of how to compare different parameter-agnostic algorithms to determine which one is ``better''. To this end, we first introduce the concept of parameter-agnostic complexity of an algorithm, which maps each combination of $\theta$ and $\eps$ to the corresponding worst-case performance.

\begin{definition}[Parameter-Agnostic Complexity of an Algorithm]\label{def:app.another_view.agnostic_complexity}
	For any $A \in \Acdet$ we call $h_A \colon (0, \infty) \times \Theta \to [1, \infty]$,
	\begin{align*}
		h_A \pare{\eps, \theta} \coloneqq \sup_{f \in \Fcthet} \cMeasure\pare{A, f}
	\end{align*} 
	the \emph{parameter-agnostic complexity for $A$ on $\FkThet$}. Here $\cMeasure\pare{A, f} = \inf\set{\ii \in \Ngeq \mid \norm{\nf \atxk} \leq \eps}$ denotes the number of iterations required for $A$ to reach an $\eps$-stationary point of $f$.
\end{definition}

To illustrate this notion, let us consider the example of Gradient Descent with constant stepsizes applied to $L$-smooth functions.

\begin{example}\label{ex:app.another_view.agnostic_compl_of_constant_GD}
	Let $\set{A_\ssi \colon \ssi > 0} = \Ac \subseteq \Acdet$ be the set of Gradient Descent algorithms with constant stepsizes $\ssi > 0$ and $\xfin = 0$. Furthermore, for each $L \geq 0$ and $\Dz \geq 0$, let $\FcLDz$ denote the set of all $L$-smooth functions with initialization gap $F\pare{0} - \inf_{x\in\R^d} F(x) \leq \Dz$, and $\Theta = [0, \infty)^2$. For each $A_\ssi \in \Ac$ we will now calculate the parameter-agnostic complexity on $\FkThet$. Firstly, for $\ssi < \nicefrac 2 L$ it is well known that 
	\begin{align*}
		h_{A_\ssi} \pare{\eps, L, \Dz} \leq \ceil*{\frac{\Dz}{\ssi \pare{\frac{L \ssi}2 - 1}}\eps^{-2}}.
	\end{align*}
	On the other hand, if $\ssi \geq \nicefrac 2 L$, we can construct the function $F(x) = \frac L 2 \pare{x+\sqrt 2 \frac \eps L}^2$ that is $L$-smooth and on which $A_\ssi$ will not converge. Hence we get that
	\begin{align}\label{eq:app.another_view.example1.constructedF}
		\cMeasure\pare{A_\ssi, F} = \infty.
	\end{align}
	Now note that $F$ belongs to $\FcLDz$ for all $\Dz \geq \nicefrac {\eps^2} L$.  Therefore \eqref{eq:app.another_view.example1.constructedF} implies that for all such $\Dz$ and $\ssi \geq \nicefrac 2 L$ we have $h_{A_\ssi}\pare{\eps, L, \Dz} = \infty$. In particular, as $L, \Dz \to \infty$ and $\eps \to 0$ we get that $h_{A_\ssi}\pare{\eps, L, \Dz} = \infty$.
\end{example}

Now that we have established a measure for the parameter-agnostic complexity of an individual algorithm, the next logical step is to consider how to compare two algorithms to determine which one is ``better''.  We argue that in general algorithms are considered better than others, if they have a preferable behaviour as problems get harder. We therefore introduce the following (pre-)order for parameter-agnostic complexities.

\begin{definition}[Ordering Complexities]\label{def:app.another_view.preorder}
	Let $\mathscr{C} = \set{f \colon (0, \infty) \times \Theta \to [1, \infty]}$ denote the set of all possible complexities. Then we define the relation $\preceq$ on $\mathscr{C}$ as
	\begin{align*}
		h_1 \preceq h_2 \Leftrightarrow h_1\pare{\eps, \theta} = \Oc \pare{h_2\pare{\eps, \theta}}, \qquad \pare{\eps \to 0, \theta \to \infty}
	\end{align*}
	where $\Oc$ denotes the multivariate $\Oc$-notation (see \Cref{def:multivariate_O_notation}).
\end{definition}

This definition paves the way for comparing the parameter-agnostic complexities of different algorithms. We say that a (parameter-agnostic) algorithm $A$ is at least as good as algorithm $B$, if $h_A \preceq h_B$. This observation naturally leads to the following definition.

\begin{definition}[Naïve Parameter-Agnostic Lower Bound]\label{def:app.another_view.weak_lb}
	Let $\Ac \subseteq \Acdet$ be an algorithm class and $g \colon (0, \infty) \times \Theta  \to [1, \infty]$. Then we call $g$ \emph{weak parameter-agnostic lower bound for $\Ac$ on $\FkThet$}, if
	\begin{align}\label{eq:app.another_view.weak_lb}
		\fas A \in \Ac \colon g \preceq h_A.
	\end{align}
\end{definition}

When comparing the definition of $\preceq$ with the assumption in \Cref{prop:sufficient_lb}, we can observe that \eqref{eq:app.another_view.weak_lb} is equivalent to the assumption stated in the proposition. Therefore, discussing the difference between \Cref{prop:sufficient_lb} and \Cref{def:lb} boils down to understanding how \Cref{def:lb} and \Cref{def:app.another_view.weak_lb} differ.

Though the concept of a weak parameter-agnostic lower bound is intuitive and straightforward, its limitations become evident when examined more closely. The following example highlights this issue.

\begin{example}\label{ex:app.another_view.weak_problem}
	Consider $\Ac = \set{A_1, A_2} \subseteq \Acdet$ and let $\FcLDz, \FkThet$ be defined as in \Cref{ex:app.another_view.agnostic_compl_of_constant_GD}. Suppose the parameter-agnostic complexities of $A_1$ and $A_2$ are given by
	\begin{align*}
		h_{A_1} \pare{\eps, L, \Dz} &= \frac{\Dz + e^L}{\eps^2},\\
		h_{A_2} \pare{\eps, L, \Dz} &= \frac{e^{\Dz} + L}{\eps^2}.
	\end{align*}
 	The best possible weak parameter-agnostic lower bound for $\Ac$ on $\FkThet$ is then given by $g \pare{\eps, L, \Dz} = \frac{\Dz + L}{\eps^2}$. However, this lower bound fails to capture the fact that all algorithms in $\Ac$ suffer from an exponential dependence on at least one parameter.
\end{example}

Motivated by this shortcoming of weak parameter-agnostic lower bounds, we instead chose \Cref{def:lb} for our notion of parameter-agnostic lower bounds. In our current setting, \Cref{def:lb} can be rephrased as follows.

\begin{proposition}\label{def:app.another_view.lb}
	Let $\Ac \subseteq \Acdet$ be an algorithm class and $g \colon (0, \infty) \times \Theta \to [1, \infty]$. Then $g$ is a parameter-agnostic lower bound of $\Ac$ on $\FkThet$ as defined in \Cref{def:lb} if and only if
	\begin{align}\label{eq:app.another_view.lb}
		\nexists A \in \Ac \colon h_A \prec g.
	\end{align}
	Here we define $h_1 \prec h_2$ if $h_1 \pare{\eps, \theta} = o \pare{h_2 \pare{\eps, \theta}}$ for $\eps \to 0, \theta \to \infty$ (see \Cref{def:multivariate_O_notation}).
\end{proposition}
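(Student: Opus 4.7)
The plan is to prove both directions at once by directly unwinding the definitions of $\prec$, $o(\cdot)$, $h_A$, and the parameter-agnostic lower bound. The proposition is essentially a bookkeeping restatement of \Cref{def:lb} in the language of asymptotic complexity ordering, so no substantial argument is needed beyond carefully matching quantifiers.

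First I would expand $h_A \prec g$ using the given definitions: by the definition of $\prec$, it is equivalent to $h_A \in o(g)$; by the multivariate $o$ from \Cref{def:multivariate_O_notation}, this in turn is equivalent to
\begin{align*}
\fas \kappa > 0\, \exs \eps_0, \theta_0 > 0\, \fas \eps \in (0, \eps_0], \theta \geq \theta_0 \colon h_A \pare{\eps, \theta} \leq \kappa g\pare{\eps, \theta}.
\end{align*}
Substituting $h_A \pare{\eps, \theta} = \sup_{f \in \Fcthet} \cMeasure\pare{A, f}$ from \Cref{def:app.another_view.agnostic_complexity} yields exactly the inner condition whose negation over $A \in \Ac$ is used to define the parameter-agnostic lower bound in \Cref{def:lb}. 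Negating the existence quantifier over $A \in \Ac$ on both sides therefore gives the claimed equivalence: $\nexists A \in \Ac$ with $h_A \prec g$ iff $g$ is a parameter-agnostic lower bound in the sense of \Cref{def:lb}.

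There is no real obstacle here; the only step that requires care is matching the quantifier order of the multivariate $o$-notation with the nested ``$\forall \kappa\, \exists \eps_0, \theta_0\, \forall \eps, \theta$'' quantifiers in \Cref{def:lb}, but these are literally the same pattern. The content of the proposition lies not in its proof but in the conceptual reframing it provides, namely viewing parameter-agnostic lower bounds as asymptotic lower envelopes for the family $\set{h_A \colon A \in \Ac}$. This is precisely what justifies preferring \Cref{def:lb} over the naïve \Cref{def:app.another_view.weak_lb}, as illustrated by \Cref{ex:app.another_view.weak_problem}.
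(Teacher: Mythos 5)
Your proof is correct and matches the paper's treatment: the paper states this proposition without a separate proof, regarding it as a direct rephrasing of \Cref{def:lb} obtained by substituting $h_A(\eps,\theta)=\sup_{f\in\Fcthet}\cMeasure(A,f)$ and recognizing the quantifier pattern of the multivariate $o$-notation, which is exactly the unwinding you perform.
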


Specifically, \eqref{eq:app.another_view.lb} ensures that no algorithm $A$ in the class $\Ac$ can have a parameter-agnostic complexity $h_A$ that is ``better'' (in the little-$o$ sense) than the proposed lower bound $g$.

Let us revisit \Cref{ex:app.another_view.weak_problem} to see how this definition fixes the previously discussed issue.

\begin{example}
	Consider the same setting as in \Cref{ex:app.another_view.weak_problem} and define $g_1 \pare{\eps, L, \Dz} \coloneqq \frac{\Dz + e^L}{\eps^2}, g_2 \pare{\eps, L, \Dz} \coloneqq \frac{e^{\Dz} + L}{\eps^2}$. Then both, $g_1$ and $g_2$ are parameter-agnostic lower bounds of $\Ac$ on $\FkThet$, while neither of them is a weak parameter-agnostic lower bound. This notion of lower bound does hence capture the fact, that there is exponential dependence in at least one variable.
\end{example}

This demonstrates the utility of employing the definition in \Cref{def:app.another_view.lb} over weak parameter-agnostic lower bounds. The more nuanced criterion allows for a better representation of the complexities from algorithms in $\Ac$. The following remark delves deeper into this distinction.

\begin{remark}\label{remark:app.another_view.difference}
	The main difference between \Cref{def:lb} and \Cref{def:app.another_view.weak_lb} (and therefore \Cref{prop:sufficient_lb}) is how they handle incomparable algorithms, i.e.~algorithms for which neither $h_A \preceq h_B$ nor $h_B \preceq h_A$. \Cref{def:app.another_view.weak_lb} enforces $a)$ that $g$ is comparable with all complexities and $b)$ that $g$ must be at least as good as all complexities. \Cref{def:lb} on the other hand only requires that complexities which are comparable with $g$ must not be strictly better than $g$.
	
	When focusing on parameters, the difference can be characterized as follows: A weak parameter-agnostic lower bound guarantees that there does not exist an algorithm in $\Ac$, that has a better dependence in \emph{any single parameter}. Parameter-agnostic lower bounds on the other hand guarantee, that there does not exist an algorithm $A$ which has better dependencies \emph{in all parameters}.

	From an order-theoretic standpoint, the difference is nearly the same as the difference between lower bounds and minimal elements. The only small difference is that we do not force $g$ to be in the set of complexities $\set{h_A \colon A \in \Ac}$.
\end{remark}

Finally we show that every weak parameter-agnostic lower bound is also a parameter-agnostic lower bound, as claimed by \Cref{prop:sufficient_lb}.

\begin{lemma}[Rephrased \Cref{prop:sufficient_lb}]\label{def:app.another_view.implication}
	Let $\Ac \subseteq \Acdet$ be an algorithm class and $g \colon (0, \infty) \times \Theta \to [1, \infty]$. If $g$ is a weak parameter-agnostic lower bound of $\Ac$ on $\FkThet$, then $g$ is also a parameter-agnostic lower bound of $\Ac$ on $\FkThet$.
\end{lemma}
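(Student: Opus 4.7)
The plan is a short proof by contradiction that hinges on a single multiplicative combination of the two asymptotic bounds. I would assume that $g$ is a weak parameter-agnostic lower bound, i.e.\ $g \in \Oc(h_A)$ for every $A \in \Ac$, but that $g$ fails to be a parameter-agnostic lower bound. By the reformulation given in Proposition~\ref{def:app.another_view.lb}, the failure produces some $A \in \Ac$ with $h_A \in o(g)$, i.e.\ for every $\kappa > 0$ there exist $\eps_0, \theta_0 > 0$ such that $h_A(\eps, \theta) \leq \kappa\, g(\eps, \theta)$ on $\eps \leq \eps_0,\ \theta \geq \theta_0$.

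Applying the weak-lower-bound hypothesis to this specific $A$ furnishes constants $\eps_0', \theta_0', K > 0$ with $g(\eps, \theta) \leq K\, h_A(\eps, \theta)$ on $\eps \leq \eps_0',\ \theta \geq \theta_0'$. The main step is then to specialize the $o$-condition to $\kappa := 1/(2K)$, obtaining thresholds $\eps_1, \theta_1 > 0$, and to evaluate both inequalities simultaneously on the intersection $\{\eps \leq \min(\eps_0', \eps_1),\ \theta \geq \max(\theta_0', \theta_1)\}$. This intersection is non-empty by the standing assumption that $\Theta$ is unbounded in each coordinate. Chaining gives
\[
h_A(\eps, \theta) \;\leq\; \tfrac{1}{2K}\, g(\eps, \theta) \;\leq\; \tfrac{1}{2K} \cdot K\, h_A(\eps, \theta) \;=\; \tfrac{1}{2}\, h_A(\eps, \theta),
\]
and the codomain constraint $h_A \geq 1$ enforced by $\mathscr{C}$ forces $1 \leq 1/2$, contradiction.

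The only subtle point, and the step I expect to be the main obstacle, is the extended-real edge case where $h_A(\eps, \theta) = +\infty$ on the whole common region. There the chained inequality collapses to $\infty \leq \infty/2 = \infty$ and is vacuous. However, the $o$-condition $h_A \leq \kappa g$ can only hold with $h_A = \infty$ if also $g = \infty$; and when $g$ is identically $+\infty$ on a cofinal region the claim is vacuous under the natural convention (consistent with the standard reading of asymptotic notation) that ``$\infty \in o(\infty)$'' is not admitted, since there is no algorithm that converges strictly faster than a lower bound that already forbids convergence. In all other cases some $(\eps, \theta)$ in the intersection has $h_A(\eps, \theta) < \infty$, and the finite-real contradiction above goes through cleanly. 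This completes the proof.
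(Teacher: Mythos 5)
Your core mechanism is the same as the paper's: apply the weak-lower-bound inequality $g \leq K h_A$ to the single offending algorithm $A$, pick $\kappa$ reciprocal to $K$, and evaluate both inequalities on the intersection of their validity regions, which is nonempty by the unboundedness assumption on $\Theta$. The difference is packaging. The paper argues \emph{directly}: from $g\pare{\eps,\theta} \leq K h_A\pare{\eps,\theta}$ it immediately reads off $h_A\pare{\eps,\theta} \geq \frac{1}{K} g\pare{\eps,\theta}$ at a suitable point $\pare{\eps,\theta}$, which is exactly the rephrased target \eqref{eq:app.another_view.lb.logic}; this rearrangement is valid in $[1,\infty]$ even when both sides are $+\infty$. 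You instead run a contradiction through the chain $h_A \leq \frac{1}{2K}g \leq \frac12 h_A$, and that chain only yields absurdity when $h_A\pare{\eps,\theta}$ is \emph{finite} at some point of the common region.

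This is where your argument has a genuine soft spot. The case $h_A \equiv \infty$ on a cofinal region is not exotic in this paper --- it is precisely the situation of the motivating example (constant-stepsize \alg{GD}, where $h_{A_\ssi}\pare{\eps,L,\Dz} = \infty$ and the lower bound is $g \equiv \infty$). Your resolution appeals to a ``convention'' that $\infty \in o\pare{\infty}$ is not admitted, but \Cref{def:multivariate_O_notation} as written evaluates $f\pare{\eps,\theta} \leq \kappa h\pare{\eps,\theta}$ in the extended reals, where $\infty \leq \kappa\cdot\infty$ holds; nothing in the paper licenses the convention you invoke, so the edge case is asserted away rather than closed. The clean fix is simply to drop the contradiction framing and argue as the paper does: fix $A$, take $\kappa \coloneqq \nicefrac 1 K$, and for arbitrary $\eps_0', \theta_0'$ exhibit $\eps \coloneqq \min\set{\eps_0,\eps_0'}$ and $\theta \geq \max\set{\theta_0,\theta_0'}$ with $h_A\pare{\eps,\theta} \geq \kappa g\pare{\eps,\theta}$, which follows from $g \leq K h_A$ with no finiteness hypothesis. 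Everything else in your write-up (the choice of thresholds, the use of unboundedness of $\Theta$, the reliance on \Cref{def:app.another_view.lb}) matches the paper.
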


\begin{proof}
	Let us first first recall the logical statements behind the two version of lower bounds. Firstly, \eqref{eq:app.another_view.weak_lb} can be rewritten to
	\begin{align}\label{eq:app.another_view.weak_lb.logic}
		\fas A \in \Ac \, \exs \eps_0, \theta_0, K > 0\, \fas \eps \in (0, \eps_0], \theta \geq \theta_0 \colon 
		g \pare{\eps, \theta} \leq K h_A \pare{\eps, \theta}.
	\end{align}
	Secondly, \eqref{eq:app.another_view.lb} corresponds to
	\begin{align}\label{eq:app.another_view.lb.logic}
		\fas A \in \Ac \, \exs \kappa > 0\, \fas \eps_0', \theta_0' > 0 \, \exs \eps \in (0, \eps_0'], \theta \geq \theta_0' \colon
		h_A \pare{\eps, \theta} \geq \kappa g \pare{\eps, \theta}.	
	\end{align}
	Now the proof is straightforward. Suppose $g$ satisfies \eqref{eq:app.another_view.weak_lb.logic} and let $A \in \Ac$.
	Choose $\kappa \coloneqq \nicefrac 1 K$ and let $\eps_0', \theta_0' > 0$ be arbitrary. Lastly define $\eps \coloneqq \min \set{\eps_0, \eps_0'}$ and choose $\theta \in \Theta$ such that $\theta \geq \theta_0$ and $\theta \geq \theta_0'$. This is possible due to our unboundedness assumption on $\Theta$ (see \Cref{sec:app.additional_discussion_lower_bounds.prelims}). Since $\eps \in (0, \eps_0]$ and $\theta \geq \theta_0$ we get that
	\begin{align*}
		g \pare{\eps, \theta} \leq K h_A \pare{\eps, \theta} = \frac 1 \kappa h_A \pare{\eps, \theta}.
	\end{align*}
	This completes the proof.
\end{proof}

\end{document}